\numberwithin{equation}{section}
\theoremstyle{plain}
\newtheorem{theorem}{Theorem}[section]
\newtheorem{proposition}[theorem]{Proposition} 
\newtheorem{lemma}[theorem]{Lemma} 
\newtheorem{corollary}[theorem]{Corollary} 
\theoremstyle{definition}
\newtheorem{definition}[theorem]{Definition} 
\newtheorem{question}[theorem]{Question} 
\theoremstyle{remark}
\newtheorem{remark}[theorem]{Remark} 
\newtheorem{example}[theorem]{Example} 
{\it}{\rm}  
\newcommand{\sss}{\scriptscriptstyle}
\newcommand{\SDF}{SDF} 
\newcommand{\email}[1]{{\scriptsize{\it E-mail address}\/: {\rm #1}} }
\def\blfootnote{\gdef\@thefnmark{}\@footnotetext}
\begin{document}
\blfootnote{\textup{2020} \textit{Mathematics Subject Classification}.
51M10, 51F99, 51M25, 52B11}

\blfootnote{\textit{key words and phrases}.
double hyperbolic space, polytope, boundary at infinity, Schl\"{a}fli differential formula (\SDF{}).}

\blfootnote{\email{lizhaozhang@alum.mit.edu}}

\begin{titlepage}
\title{On the total volume of the double hyperbolic space}

\author{Lizhao Zhang
}
\date{}
\end{titlepage}

\maketitle

\begin{abstract}
Let the \emph{double hyperbolic space} $\mathbb{DH}^n$, 
proposed in this paper as an extension of the hyperbolic space $\mathbb{H}^n$,
contain a two-sheeted hyperboloid with  the two sheets
connected to each other along the boundary at infinity.
We propose to extend the volume of convex polytopes in $\mathbb{H}^n$ 
to polytopes in $\mathbb{DH}^n$, where the volume is invariant under isometry
but can possibly be complex valued. 
We show that the total volume of $\mathbb{DH}^n$ is equal to 
$i^n V_n(\mathbb{S}^n)$ for both even and odd dimensions,
and prove a Schl\"{a}fli differential formula (\SDF{}) for $\mathbb{DH}^n$.
For $n$ odd, the volume of a polytope in $\mathbb{DH}^n$
is shown to be completely determined by its intersection with $\partial\mathbb{H}^n$
and induces a new intrinsic \emph{volume} on $\partial\mathbb{H}^n$
that is invariant under M\"{o}bius transformations.
\end{abstract}

\section{Introduction}
\label{section_intro}

In this paper we extend the hyperbolic space $\mathbb{H}^n$ to a space formed 
by a \emph{two}-sheeted hyperboloid 
by identifying their boundary at infinity projectively,
and extend the volume properly to the new space.
Denote the new space by \emph{double hyperbolic space} $\mathbb{DH}^n$,
and the other sheet by $\mathbb{H}^n_{-}$.
We remark that $\mathbb{H}^n_{-}$ is not isometric to $\mathbb{H}^n$ 
and the length element $ds$ on $\mathbb{H}^n_{-}$ is negative, 
with the associated volume element also negative for $n$ odd
(see Section~\ref{section_preliminaries} for more details).

\subsection{Background and motivation}

Among the models of $\mathbb{H}^n$, two of them are the
\emph{upper half-space model} and the \emph{hemisphere model}, and in both models
the boundary at $x_0=0$ corresponds to the \emph{boundary at infinity} $\partial\mathbb{H}^n$.
It seems natural to ask if some kind of theory can be developed across the boundary,
so that the upper half-space model can be extended to the lower half-space, and 
the hemisphere model to the lower hemisphere,
making them more like a ``full-space model'' or ``full-sphere model''
in some sense. 

In this note, we attempt to provide such a theory by adding $\mathbb{H}^n_{-}$ 
to the picture as the lower half-space or the lower hemisphere respectively
with a natural identification $\partial\mathbb{H}^n=\partial\mathbb{H}^n_{-}$ at $x_0=0$.
Notice that $\mathbb{DH}^n$ is homeomorphic to the standard unit $n$-sphere $\mathbb{S}^n$.

\subsection{Main results}

We first introduce some (new) basic notions of $\mathbb{DH}^n$ using the hyperboloid model,
but the notions can be easily extended to other models.
An \emph{isometry} of $\mathbb{DH}^n$ is an isometry of $\mathbb{H}^n$  
that also preserves the antipodal points (the \emph{counterpart}) in $\mathbb{H}^n_{-}$ still as antipodal points.
A \emph{half-space} in $\mathbb{DH}^n$ is obtained by gluing a half-space in $\mathbb{H}^n$ 
and its antipodal image in $\mathbb{H}^n_{-}$ along 
their common boundary on $\partial\mathbb{H}^n$ (after the identification, see Figure~\ref{figure_half_space}).
Note that by the definition, $\mathbb{H}^n$ and $\mathbb{H}^n_{-}$ are not half-spaces in $\mathbb{DH}^n$,
though both of them look like a ``half'' of $\mathbb{DH}^n$.
A \emph{polytope} in $\mathbb{DH}^n$ is a finite intersection of half-spaces in $\mathbb{DH}^n$,
the same as the union of a polytope in $\mathbb{H}^n$ (possibly unbounded) 
and its antipodal image in $\mathbb{H}^n_{-}$.
So a polytope in $\mathbb{DH}^n$ is always symmetric between $\mathbb{H}^n$ and $\mathbb{H}^n_{-}$ through antipodal points,
but by definition a polytope in $\mathbb{H}^n$ by itself is not a polytope in $\mathbb{DH}^n$.

\begin{figure}[h]
\centering
  \includegraphics[width=0.3\textwidth]{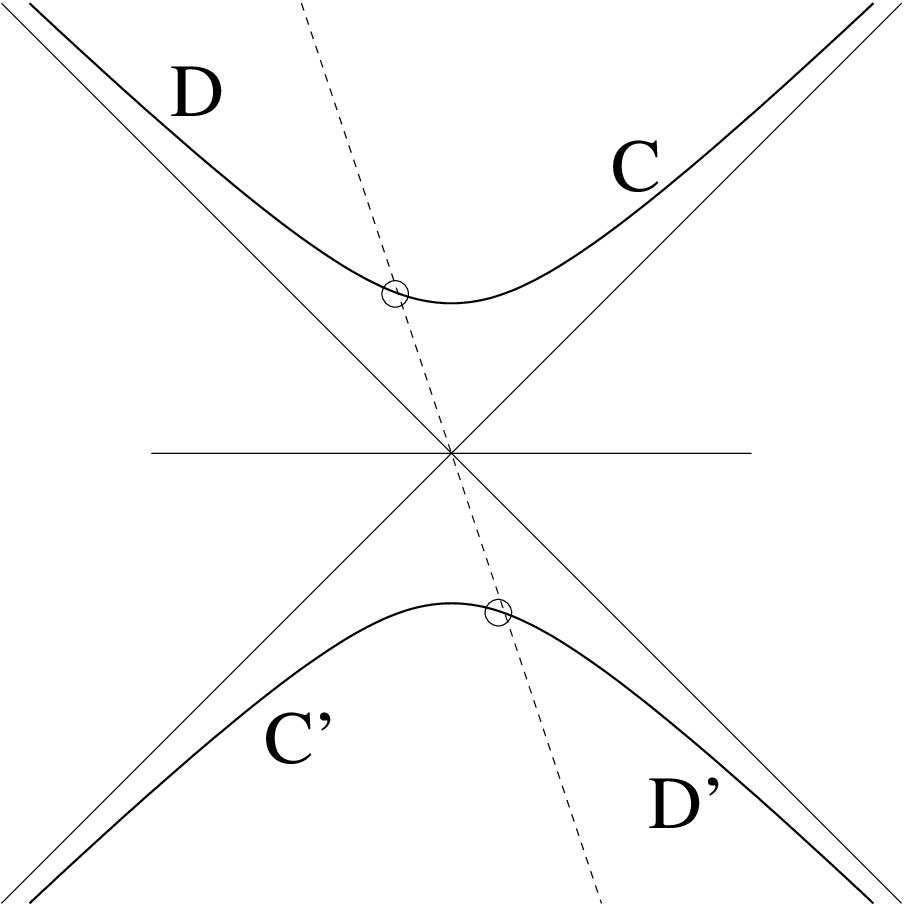}
\caption{A hyperplane cut $\mathbb{DH}^n$ into two half-spaces, the union of $C$ and $C'$, and the union of $D$ and $D'$}
\label{figure_half_space}
\end{figure}

Our focus is to extend the volume on $\mathbb{H}^n$
to $\mathbb{DH}^n$ in a proper way, such that it is compatible with
the volume elements of both $\mathbb{H}^n$ and $\mathbb{H}^n_{-}$,
and is also properly defined across $\partial\mathbb{H}^n$.
However, the integral of the volume element across $\partial\mathbb{H}^n$
cannot be defined by the standard Lebesgue integration.
To fix this issue, we treat the volume as the limit of the integral of 
a complex perturbation of the volume element (as a complex measure on $\mathbb{R}^n$). 
We shall note that in the complex perturbation the underlying space is still real, but endowed with 
complex valued ``Riemannian metrics'' instead, with no complex geometry involved.
Thus we only use complex analysis in a very limited way
mainly to compute the integral of complex valued functions in the real space.
In fact, except for some basic understanding of the Riemannian metrics 
on the different models of $\mathbb{H}^n$,
no prior knowledge of differential geometry is assumed of the reader.

For this newly introduced volume on $\mathbb{DH}^n$,
it is shown in Theorem~\ref{theorem_volume_invariant}
(whose proof constitutes an essential part of this paper)
that the volume of a polytope $P$ in $\mathbb{DH}^n$ (denote by $V_n(P)$)
is well defined and invariant under isometry.
This is true even when $P$ contains a non-trivial portion 
of $\partial\mathbb{H}^n$, but in this case the volume is only finitely (but not countably) additive
and may also be complex valued.
A central tool we use to develop the theory is the Schl\"{a}fli differential formula (\SDF{}),
a formula that applies to the volume of polytopes in 
the spherical, Euclidean, or hyperbolic space of constant curvature $\kappa$.
Some remarks on the history of the formula can be found in Milnor \cite{Milnor:Schlafli}.
See also Rivin and Schlenker~\cite{RivinSchlenker} and Su{\'a}rez-Peir{\'o}~\cite{Suarez:deSitter}
for some generalizations.

With a proper choice of the extension of the volume from $\mathbb{H}^n$ to $\mathbb{DH}^n$
(Definition~\ref{definition_volume_P}, though the choice is not unique),
we obtain the total volume of $\mathbb{DH}^n$ in the following.

\begin{theorem}
\label{theorem_total_volume}
The total volume of $\mathbb{DH}^n$ is 
\begin{equation}
V_n(\mathbb{DH}^n)=i^n V_n(\mathbb{S}^n)
\end{equation}
for both even and odd dimensions,
where $V_n(\mathbb{S}^n)$ is the $n$-dimensional volume of the standard 
unit $n$-sphere $\mathbb{S}^n$.
\end{theorem}

Heuristically, the two-sheeted hyperboloid may be thought of as
a ``sphere'' of radius $i=\sqrt{-1}$ in the hyperboloid model,
and therefore it suggests why the formula above might make sense.
While the choice of the value of $V_1(\mathbb{DH}^1)$ is not unique
as explained later in Section~\ref{section_definition_volume_P},
we provide more reasoning about the choice of $2\pi i$
in the sense of complex analysis.
We prove a \SDF{} for $\mathbb{DH}^n$.

\begin{theorem}
\label{theorem_schlafli}
\emph{(Schl\"{a}fli differential formula for $\mathbb{DH}^n$)}
For $n\ge 2$, let $P$ be a polytope in $\mathbb{DH}^n$ that does not contain any ideal vertices, 
and for each $(n-2)$-dimensional face $F$, let $\theta_F$ be the dihedral angle at $F$.
Then for $\kappa=-1$,
\begin{equation}
\kappa\cdot dV_n(P)=\frac{1}{n-1}\sum_{F}V_{n-2}(F)\,d\theta_F,
\end{equation}
where the sum is taken over all $(n-2)$-faces $F$ of $P$.
For $n-2=0$, $V_0(F)$ is the number of points in $F$.
\end{theorem}

If the intersection of $P$ and the faces of some of the closed half-spaces (that form $P$) 
is a point on $\partial\mathbb{H}^n$, we call it an \emph{ideal vertex}
(see Figure~\ref{figure_ideal} for an illustration using the upper half-space model).
The main reason for excluding ideal vertices in Theorem~\ref{theorem_schlafli}
is to simplify the proof.
To some extent, we were expecting a \SDF{} 
for $\mathbb{DH}^n$ first, and then used it as a guideline
to properly introduce a volume on polytopes in $\mathbb{DH}^n$.
We also have the following finiteness result.

\begin{theorem}
\label{theorem_volume_finite}
\emph{(Uniform boundedness of $V_n(P)$ for a fixed $m$)}
Let $P$ be a polytope in $\mathbb{DH}^n$
and be the intersection of at most $m$ half-spaces in $\mathbb{DH}^n$, then 
\begin{equation}
\label{equation_volume_finite}
|V_n(P)|\le\frac{m!}{2^{m-1}}V_n(\mathbb{S}^n).
\end{equation}
\end{theorem}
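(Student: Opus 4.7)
The plan is to prove the bound by induction on the dimension $n$ (stepping down by $2$), using the Schl\"afli differential formula (Theorem~\ref{theorem_schlafli}) as the main engine in the inductive step. The base case $n = 0$ is trivial since $\mathbb{DH}^0$ has two points and $|V_0(P)| \le 2 = V_0(\mathbb{S}^0) \le \frac{m!}{2^{m-1}} V_0(\mathbb{S}^0)$ for all $m\ge 1$. The base case $n = 1$ I would verify directly from the definition of $V_1$ as a limit of complex perturbations: a polytope in $\mathbb{DH}^1 \cong S^1$ defined by $m$ half-spaces is determined by at most $m$ boundary points, and the bound $|V_1(P)| \le \frac{m!}{2^{m-1}} \cdot 2\pi$ can be checked by tracking how the perturbed length element integrates across the finitely many arcs and $\partial\mathbb{H}^1$-crossings. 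The cases $m \le 1$ for general $n$ I would dispose of separately, noting that a half-space has volume exactly $\tfrac{1}{2}V_n(\mathbb{DH}^n)$ by the antipodal isometry together with Theorem~\ref{theorem_volume_invariant}.

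For the inductive step with $n\ge 2$, assuming the bound holds in dimension $n-2$, I would first treat a polytope $P$ with no ideal vertices. The idea is to embed $P$ in a continuous family $\{P_t\}_{t\in[0,1]}$ with $P_1 = P$ and $P_0$ a degenerate limit configuration in which the defining hyperplanes collapse onto a common proper subspace, so that $V_n(P_0) = 0$. Integrating Theorem~\ref{theorem_schlafli} (with $\kappa = -1$) along this path yields
\begin{equation*}
V_n(P) \;=\; \frac{1}{1-n}\int_0^1 \sum_{F\subset P_t} V_{n-2}\bigl(F(t)\bigr)\,d\theta_F(t).
\end{equation*}
Each $(n-2)$-face $F = P\cap H_i\cap H_j$ sits inside the totally geodesic $H_i\cap H_j \cong \mathbb{DH}^{n-2}$, cut out there by the traces of the remaining $m-2$ facets; the inductive hypothesis therefore gives $|V_{n-2}(F)| \le \tfrac{(m-2)!}{2^{m-3}}V_{n-2}(\mathbb{S}^{n-2})$. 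The number of $(n-2)$-faces is at most $\binom{m}{2}$, and each dihedral angle varies through a range of length at most $\pi$. Substituting these and using the sphere recursion $V_n(\mathbb{S}^n) = \tfrac{2\pi}{n-1}V_{n-2}(\mathbb{S}^{n-2})$ collapses the right-hand side cleanly:
\begin{equation*}
|V_n(P)| \;\le\; \frac{1}{n-1}\cdot\binom{m}{2}\cdot\pi\cdot\frac{(m-2)!}{2^{m-3}}V_{n-2}(\mathbb{S}^{n-2}) \;=\; \frac{m!}{2^{m-1}}V_n(\mathbb{S}^n).
\end{equation*}
Polytopes containing ideal vertices I would handle by approximation, perturbing each offending hyperplane slightly and passing to the limit using continuity of the complex-perturbation volume.

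The hard part will be constructing the deformation path $\{P_t\}$ cleanly enough for the Schl\"afli integration to be valid. Along the entire path the polytope must have no ideal vertex and all $m$ facets must remain distinct, yet the limiting $P_0$ is required to be degenerate with $V_n(P_0) = 0$; these demands are in tension. I expect to resolve this by first restricting to a generic open dense set of configurations where a natural ``shrinking'' family exists (for instance, rotating all facets toward a common hyperplane along an isometric geodesic family in the space of polytopes), proving the bound on this set, and then extending by continuity. A related delicacy is that when $P_t$ approaches $\partial\mathbb{H}^n$ the dihedral angles become complex through the perturbation defining $V_n$, so I must verify that the crude estimate $|\Delta\theta_F|\le\pi$ survives the passage to the complex limit --- most cleanly, by running the entire argument at the level of the complex perturbation and taking the limit only at the very end.
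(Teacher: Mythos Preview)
Your strategy---integrate the Schl\"afli formula along a one-parameter family and feed in the inductive bound in dimension $n-2$, then collapse via the recursion $V_n(\mathbb{S}^n)=\tfrac{2\pi}{n-1}V_{n-2}(\mathbb{S}^{n-2})$---is exactly the paper's, and your arithmetic checks. The difference is in \emph{which} one-parameter family you use, and this is where the paper's choice dissolves every difficulty you flagged.

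You propose to deform the $m$ defining hyperplanes themselves toward a common degenerate configuration. That forces you to use the full SDF (Theorem~\ref{theorem_schlafli}), which requires no ideal vertices; it also gives you $\binom{m}{2}$ codimension-2 faces whose dihedral angles you must control along the whole path. You correctly identify the ``hard part'': building a path along which (i) no ideal vertices appear, (ii) each $\theta_F$ varies through total variation $\le\pi$, and (iii) the endpoint has volume zero. These constraints really are in tension, and there is no obvious construction satisfying all three.

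The paper instead introduces one \emph{extra} sweeping hyperplane: in the upper half-space model, take $P_t=P\cap\{x_{n-1}\le t\}$ and integrate in $t$. Now only a single facet moves, so the relevant SDF is the special case of Lemma~\ref{lemma_SDF_special}, which is proved directly (before Theorem~\ref{theorem_schlafli}) and explicitly allows ideal vertices---so your approximation step is unnecessary. The codimension-2 faces on the moving facet are just $F_i=(\{x_{n-1}=t\}\cap H_i)\cap P$, at most $m$ of them, each cut by at most $m-1$ of the remaining $H_j$; the inductive bound is then $\tfrac{(m-1)!}{2^{m-2}}V_{n-2}(\mathbb{S}^{n-2})$. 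Crucially, each angle $\theta_{F_i}$ between the sweeping plane and a \emph{fixed} $H_i$ is automatically monotone in $t$ (see (\ref{equation_theta_F_t})), giving $\int|d\theta_{F_i}|\le\pi$ for free. And the starting configuration is genuinely empty ($V_n(P_{t_0})=0$), not a degenerate limit. Plugging in $m\cdot\pi\cdot\tfrac{(m-1)!}{2^{m-2}}$ in place of your $\binom{m}{2}\cdot\pi\cdot\tfrac{(m-2)!}{2^{m-3}}$ gives the same numerical bound. So: keep your induction scheme and your arithmetic, but swap out the deformation for a parallel sweep; everything you were worried about then evaporates.
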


The bound given in (\ref{equation_volume_finite}) is very loose, 
but it serves our main purpose of showing that $V_n(P)$ is finite and 
uniformly bounded for a fixed $m$.

\begin{theorem}
\label{theorem_volume_real_imaginary}
Let $\mathcal{H}$ be the algebra over $\mathbb{DH}^n$ generated by half-spaces in $\mathbb{DH}^n$, and $P\in\mathcal{H}$.
Then $V_n(P)$ is real for $n$ even,
and $V_n(P)$ is imaginary for $n$ odd and is completely determined by $P\cap \partial\mathbb{H}^n$.
\end{theorem}

\begin{remark}
Assume $n$ is odd. Although a convex polytope in $\mathbb{H}^n$ always has real volume,
recall that by definition a convex polytope is not an element in $\mathcal{H}$,
so it does not contradict Theorem~\ref{theorem_volume_real_imaginary}.
Theorem~\ref{theorem_volume_real_imaginary}
means that the information of $V_n(P)$ is completely encoded 
in the boundary at infinity $\partial\mathbb{H}^n$, 
which is crucial for exploring new geometric properties
of $\partial\mathbb{H}^n$ on top of the standard conformal structure on a sphere.
In fact, assuming $V_n(P)$ is well defined (Theorem~\ref{theorem_volume_invariant}),
the interested reader may directly read the proof of Theorem~\ref{theorem_volume_real_imaginary}
without first going through the long proof of Theorem~\ref{theorem_volume_invariant},
as the two proofs are independent.
\end{remark}

An important application of Theorem~\ref{theorem_volume_real_imaginary} 
is that for $n$ odd and $P\in\mathcal{H}$,
it induces an intrinsic \emph{volume} on $\partial\mathbb{H}^n$ by simply assigning
$V_n(P)$ to $G$ where $G=P\cap \partial\mathbb{H}^n$
(will be adjusted by a constant factor later).
For $n=2m+1$, denote the volume of $G$ by $V_{\infty,2m}(G)$,
then $V_{\infty,2m}(G)$ is invariant under M\"{o}bius transformations (Theorem~\ref{theorem_invariant_mobius}).
To our knowledge, both the definition of $V_{\infty,2m}(G)$ and its (conformal) invariance property are new.
Thus on top of the standard conformal structure on $\partial\mathbb{H}^{2m+1}$ as a sphere,
we obtain $V_{\infty,2m}(G)$ as a new geometric invariant of $\partial\mathbb{H}^{2m+1}$.
We also show that $\partial\mathbb{H}^{2m+1}$ has hidden geometric properties 
of the spherical, (double) hyperbolic, and Euclidean spaces at the same time
(Theorem~\ref{theorem_polytope_volume_infinity}).
We remark that $V_{\infty,2m}(G)$ is not induced by any volume form 
on $\partial\mathbb{H}^{2m+1}$ as a differentiable manifold,
and it takes values positive, negative or zero as well.

If $P$ is also a polytope in $\mathbb{DH}^n$, 
namely, a finite intersection of closed half-spaces in  $\mathbb{DH}^n$, 
we call $G$ a \emph{polytope} in $\partial\mathbb{H}^n$.
We show that $V_{\infty,2m}(G)$ 
satisfies a new version of \SDF{} for $\partial\mathbb{H}^{2m+1}$
(Theorem~\ref{theorem_schlafli_boundary_infinity_new}).

\subsection{Related works}
\label{section_related_works}

In the hyperboloid model of $\mathbb{H}^n$,
though the hyperboloid in $\mathbb{R}^{n,1}$ has two sheets, 
usually most works deal with only one of the two sheets of the hyperboloid
or identify the two sheets projectively.
For some basic notions of hyperbolic space, 
see Cannon \emph{et al.}~\cite{Cannon:hyperbolic}
and Milnor~\cite{Milnor:hyperbolic}.
To extend the hyperbolic space $\mathbb{H}^n$ beyond the boundary at infinity
and have a well defined volume across the boundary,
one way is to use the de Sitter space as an extension.
In the Klein model of $\mathbb{H}^n$, 
which is the interior of an open disk in the projective space $\mathbb{RP}^n$
and has a metric
\[ds^2=\frac{dx_1^2+\cdots+dx_n^2}{1-x_1^2-\cdots-x_n^2}
+\frac{(x_1dx_1+\cdots+x_ndx_n)^2}{(1-x_1^2-\cdots-x_n^2)^2},
\]
Cho and Kim \cite{ChoKim} applied the same metric formula to the outside of the disk
and defined a complex valued volume on the extended space. 
In this interpretation, straight lines in the projective space
across the boundary $\partial\mathbb{H}^n$ can also be viewed as geodesics
in the extended space,
which in fact was the main motivation for the construction.
At the outside of the disk, the de Sitter part, it is shown in \cite{ChoKim} that 
its metric $ds^2$ is the negative of the metric $ds^2$ on the de Sitter space.
Under a similar setting, the basic notions such as length and angle 
were also explored through cross ratio (see Schlenker \cite{Schlenker:cross}).

More precisely, the extended space Cho and Kim \cite{ChoKim} considered is a double covering
of the projective space and homeomorphic to the standard $n$-sphere $\mathbb{S}^n$.
It can be viewed as obtained from a radial projection from the origin of $\mathbb{R}^{n,1}$ 
that maps the two-sheeted hyperboloid and the de Sitter space to the unit sphere
$x_0^2+\cdots+x_n^2=1$,
and then changing the induced metric on the de Sitter part from $ds^2$ to $-ds^2$.
Namely, changing the spacelike geodesics
to timelike geodesics, and vice versa.
In this model, the extended space contains three open parts,
with one de Sitter part (for $n=1$ it has two connected components)
and two hyperbolic parts that both are isometric to $\mathbb{H}^n$.
In our construction of $\mathbb{DH}^n$,
the lower sheet $\mathbb{H}^n_{-}$ is \emph{not} isometric to $\mathbb{H}^n$.

While the complex valued geometry Cho and Kim constructed is consistent
with both the hyperbolic and the de Sitter space
as well as across the boundaries,
an obvious drawback of this extension is that it is a \emph{mixture} 
of Riemannian geometry (the hyperbolic parts) and Lorentzian geometry (the de Sitter part).
Actually, the de Sitter part cannot be directly taken out to 
leave the remaining two hyperbolic parts to form a consistent geometry across the boundary,
so this model does not serve the purpose if one wants the extension to contain only hyperbolic parts.

The crucial difference of our construction of $\mathbb{DH}^n$ is to treat 
the length element $ds$ in the lower sheet $\mathbb{H}^n_{-}$ as being negative,
which makes it possible to use complex analysis
to obtain a well defined volume on an extension that contains only hyperbolic parts.
To our knowledge, this construction we made is the first of its kind in hyperbolic geometry.

\section{Preliminaries}
\label{section_preliminaries}

We recall some basic properties of $\mathbb{H}^n$ and introduce new notions for $\mathbb{DH}^n$.
Let $\mathbb{R}^{n,1}$ be the $(n+1)$-dimensional Minkowski space endowed with a bilinear product
\begin{equation}
\label{equation_bilinear_product}
x\cdot y=-x_0y_0+x_1y_1+\cdots+x_ny_n,
\end{equation}
then in the \emph{hyperboloid model} the hyperbolic space $\mathbb{H}^n$ is defined by
\[ \bigl\{x \in \mathbb{R}^{n,1}: x\cdot x = -1, \quad x_0 > 0 \bigr\},
\]
the upper sheet of a two-sheeted hyperboloid in $\mathbb{R}^{n,1}$.
The bilinear product induces a metric $ds^2=-dx_0^2+dx_1^2+\cdots+dx_n^2$,
and by convention the length element $ds$ is $(ds^2)^{1/2}$ in $\mathbb{R}^{n,1}$.
But in this paper $ds$ can also be $-(ds^2)^{1/2}$ in some other cases
(which is one of the most important features of the construction),
so sometimes we explicitly specify the length element $ds$ for $ds^2$.
Let $\mathbb{R}^{n,1}_{-}$ be a copy of $\mathbb{R}^{n,1}$,
namely it has the same bilinear product (\ref{equation_bilinear_product}) as in $\mathbb{R}^{n,1}$,
but the $ds$ on $\mathbb{R}^{n,1}_{-}$ is the negative of the $ds$ on $\mathbb{R}^{n,1}$
\begin{equation}
\label{equation_negative_metric}
ds=-(-dx_0^2+dx_1^2+\cdots+dx_n^2)^{1/2}.
\end{equation}
Define $\mathbb{H}^n_{-}$ by
\[ \bigl\{x \in \mathbb{R}^{n,1}_{-}: x\cdot x = -1, \quad x_0 < 0 \bigr\},
\]
the lower sheet of a \emph{different} two-sheeted hyperboloid in $\mathbb{R}^{n,1}_{-}$,
namely, $\mathbb{H}^n_{-}$ is not the lower sheet of the hyperboloid in $\mathbb{R}^{n,1}$.
If we treat $\partial\mathbb{H}^n$ as the end of those \emph{half}-lines that lie on the 
\emph{future light cone} $\{x\in\mathbb{R}^{n,1}: x\cdot x=0, \quad x_0>0\}$  in $\mathbb{R}^{n,1}$,
and $\partial\mathbb{H}^n_{-}$ as the end of those half-lines that lie on the 
\emph{past light cone} $\{x\in\mathbb{R}^{n,1}_{-}: x\cdot x=0, \quad x_0<0\}$  in $\mathbb{R}^{n,1}_{-}$,
then by identifying $\partial\mathbb{H}^n$ with $\partial\mathbb{H}^n_{-}$ projectively, 
we glue $\mathbb{H}^n$ and $\mathbb{H}^n_{-}$ together and obtain $\mathbb{DH}^n$.

Notice that the length element $ds$ on $\mathbb{H}^n_{-}$ is negative,
hence $\mathbb{H}^n_{-}$ is \emph{not} isometric to $\mathbb{H}^n$.
However, this change of metric does not affect 
the constant curvature $\kappa$ of $\mathbb{H}^n_{-}$, which is still $-1$.%
\footnote{It should not be confused with changing the metric from $ds^2$ to $-ds^2$,
which does change the sign of the constant curvature $\kappa$.
In this paper, to clear up any confusion,
for a sign change of metric
we always specify whether we are referring to $-ds$ or $-ds^2$.
} 
To have a well defined ``distance'' (which can be complex valued)
between a pair of points in $\mathbb{H}^n$ and $\mathbb{H}^n_{-}$ respectively,
we will show that it is necessary to require the length element $ds$ in $\mathbb{H}^n_{-}$
to be negative as defined in (\ref{equation_negative_metric}).%
\footnote{In fact, this is expected even without using complex analysis.
Let $x$ and $-x$ be the vertices of a half-space in $\mathbb{DH}^1$,
and $y$ and $-y$ be the vertices of a ``smaller'' half-space sitting completely inside the first one.
If we expect all half-spaces to have a fixed length $c$,
then the distance from $-x$ to $-y$ inside $\mathbb{H}^1_{-}$ has to be negative.
}
The associated volume element of $\mathbb{H}^n_{-}$ needs to multiply $(-1)^n$
compared to the $\mathbb{H}^n$ case, so it is also negative if $n$ is odd.

Recall that a half-space in $\mathbb{DH}^n$ is obtained by gluing a half-space in $\mathbb{H}^n$
and its antipodal image in $\mathbb{H}^n_{-}$ along their common boundary.
It can be expressed as
\begin{equation}
\label{equation_half_space_hyperboloid}
\bigl\{x \in \mathbb{DH}^n: \ell(x)x\cdot e \geq 0 \bigr\},
\end{equation}
where $e$ with $e\cdot e=1$ is the inward unit normal to the half-space along the face in the \emph{upper} sheet, 
$x\cdot e$ is the same bilinear product (\ref{equation_bilinear_product}) for both $x_0>0$ and $x_0<0$,
and $\ell(x)=1$ if $x_0>0$ and $\ell(x)=-1$ if $x_0<0$.
The sign of $\ell(x)$ has more to do with the linear space
($\mathbb{R}^{n,1}$ or $\mathbb{R}^{n,1}_{-}$) each of the two sheets is embedded in
than with the sheet itself, 
but for now this definition of $\ell(x)$ is convenient and enough for our purpose.
As $\partial\mathbb{H}^n$ and $\partial\mathbb{H}^n_{-}$ are identified projectively,
it suggests that it is crucial to use $\ell(x)$ to define the half-space in $\mathbb{DH}^n$.
We also call a 1-dimensional half-space in $\mathbb{DH}^1$ a \emph{half circle}.

Let $M^n$ be the spherical, Euclidean, or hyperbolic space 
(including $\mathbb{H}^n_{-}$ as well) of constant curvature $\kappa$.
Unlike in $M^n$ where a half-space is always on the same side of a plane,
in $\mathbb{DH}^n$ notice that in (\ref{equation_half_space_hyperboloid}) 
(see Figure~\ref{figure_half_space}) the half-space's upper and lower parts 
\emph{appear} to be on different sides of a plane $\{x \in \mathbb{R}^{n,1}: x\cdot e = 0 \}$.
Although seemingly counterintuitive, this rather strange property can be explained 
by the fact that the upper and the lower sheets are embedded in different linear spaces
$\mathbb{R}^{n,1}$ and $\mathbb{R}^{n,1}_{-}$ respectively.
However for two half-spaces of $\mathbb{DH}^n$ whose faces intersect, 
the intersecting angle is the same
in both the upper and the lower sheets, which is important for the development of the theory.

By an $n$-dimensional \emph{convex polytope} in $M^n$,
we mean a compact subset which 
can be expressed as a finite intersection of closed half-spaces.
If we remove the compactness restriction and the resulting intersection 
is unbounded, then the intersection is an \emph{unbounded polytope}.
(In $\mathbb{H}^n$ for $n\ge 2$, 
if we relax the convex polytope to also include ideal vertices,
then it still has finite volume.
See Haagerup and Munkholm \cite{HaagerupMunkholm}, 
see also Luo \cite{Luo:continuity} and Rivin \cite{Rivin:volumes}.)
For an overview of polytopes see Ziegler \cite{Ziegler:polytopes}.

\begin{remark}
\label{remark_polytope}
While a convex polytope in $M^n$ is always \emph{convex}
and homeomorphic to a closed ball, this is not so for a polytope in $\mathbb{DH}^n$,
e.g., it can possibly contain a pair of connected components in $\mathbb{H}^n$ 
and $\mathbb{H}^n_{-}$ respectively. 
Also $\mathbb{DH}^n$ itself is a polytope, but without boundary.
\end{remark}

\section{Models for $\mathbb{DH}^n$}
\label{section_models}

To analyze $\mathbb{DH}^n$,
we will apply the same models of $\mathbb{H}^n$ to $\mathbb{DH}^n$
with some simple adjustment,
and we will need those models before we can introduce 
the definition of $V_n(P)$ in Section~\ref{section_definition_volume_P}.
As the details of the models of $\mathbb{H}^n$ are well known
(see, e.g., \cite[Section 7]{Cannon:hyperbolic})
and our adjustment is straightforward, 
we will just jump to the details of the
adjusted models for $\mathbb{DH}^n$ directly.
In fact, by restricting the adjusted models to $\mathbb{H}^n$ only,
one can obtain the original models of $\mathbb{H}^n$.
By convention, we will use the same model names of $\mathbb{H}^n$
to describe $\mathbb{DH}^n$ as well.
But the models for $\mathbb{DH}^n$
are not restricted only to the regions as the model names may suggest,
e.g., the hemisphere model for $\mathbb{DH}^n$
is no longer only restricted to the region of a \emph{hemisphere}
but instead uses the full sphere.

\subsection{Hemisphere model}
\label{section_hemisphere}

We start with the hemisphere model of $\mathbb{H}^n$,
which comes from a \emph{stereographic projection} 
of the upper hyperboloid $\mathbb{H}^n$ from $(-1,0,\dots,0)$ 
to the upper half of the unit $n$-sphere:
$x_0^2+\cdots+x_n^2=1, x_0>0$
with a map 
\begin{equation}
\label{equation_hemisphere_coordinate}
(x_0,x_1,\dots,x_n)\mapsto(1/x_0,x_1/x_0,\dots,x_n/x_0).
\end{equation}

The following argument is crucial for this paper.
The \emph{same} stereographic projection (\ref{equation_hemisphere_coordinate})
also maps the lower hyperboloid $\mathbb{H}^n_{-}$ to the lower half
of the unit sphere with $x_0<0$,
and maps the boundary at infinity projectively
to the unit $(n-1)$-sphere on $x_0=0$ with a natural identification 
$\partial\mathbb{H}^n=\partial\mathbb{H}^n_{-}$.
So $\mathbb{DH}^n$ is mapped one-to-one to the \emph{full} unit sphere. 
The projection  (\ref{equation_hemisphere_coordinate})
maps any pair of antipodal points $x$ and $-x$
to a pair of mirror points $A$ and $A'$ with respect to the plane $x_0=0$
(see Figure~\ref{figure_hemisphere}),
and also maps any $\mathbb{DH}^1$ embedded in $\mathbb{DH}^n$
to a small circle on the $n$-sphere that is perpendicular to $x_0=0$.
A simple but crucial fact is that, as a consequence of the way
a half-space in $\mathbb{DH}^n$ is defined in the hyperboloid model
in (\ref{equation_half_space_hyperboloid}), by (\ref{equation_hemisphere_coordinate})
any half-space in the hemisphere model is an intersection of 
the unit $n$-sphere and a half-space in $\mathbb{R}^{n,1}$
whose face is a vertical plane to $x_0=0$.
In fact, the half-space in the hemisphere model also gives a new explanation that
why in the hyperboloid model a half-space's upper and lower portion
need to appear on different sides of a plane. 
Notice also that a half-space in $\mathbb{DH}^n$ can be fully contained in another half-space,
a feature different from the standard sphere.

\begin{figure}[h]
\centering
  \includegraphics[width=0.6\textwidth]{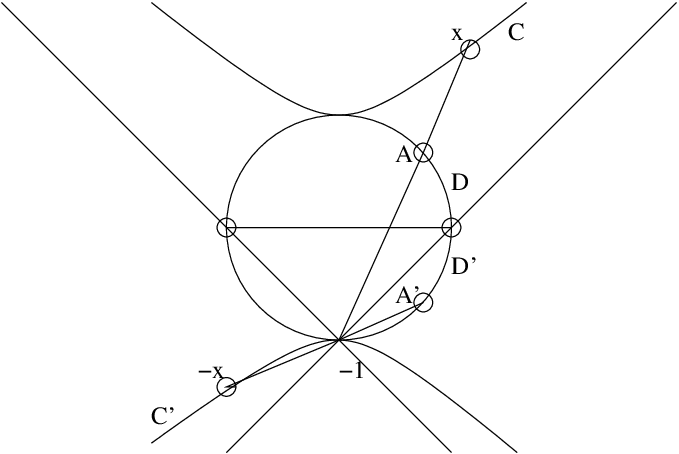}
\caption{The stereographic projection from $-1$ maps $C$ and $C'$ (a half-space in hyperboloid model)
to $D$ and $D'$ (a half-space in hemisphere model) respectively}
\label{figure_hemisphere}
\end{figure}

The associated metric on the upper hemisphere is
\begin{equation}
\label{equation_ds_hemisphere}
ds=(dx_0^2+\cdots+dx_n^2)^{1/2}/x_0,
\end{equation}
and as the length element $ds$ on $\mathbb{H}^n_{-}$ is negative,
the same form applies to the lower hemisphere with $x_0<0$ as well.
In the Euclidean space, let $S^n_r$ be the sphere centered at $O$ with radius $r$,
and $dA_r$ be the \emph{Euclidean} volume element (or ``area element'') on $S^n_r$.
Then at any point on $S^n_r$ with $x_0\ne 0$,
 $dA_r$ is the volume element of the tangent space 
 and can be written as $\frac{dx_1\cdots dx_n}{|\cos\theta|}$,
 where $\theta$ is the angle between the outward unit normal at this point and the $x_0$-axis
 (see Figure~\ref{figure_area_element}).
 As $\cos\theta=x_0/r$, then
\begin{equation}
\label{equation_Euclidean_volume_element}
dA_r=\pm \frac{rdx_1\cdots dx_n}{x_0},
\end{equation}
with the plus sign for $x_0>0$ and the minus sign for $x_0<0$.

\begin{figure}[h]
\centering
\resizebox{.4\textwidth}{!}
  {\input{fig_area_element.pspdftex}}
\caption{Euclidean volume element $dA_r$}
\label{figure_area_element}
\end{figure}

So with $r=1$, the associated volume element of (\ref{equation_ds_hemisphere}),
multiplying $\pm \frac{dx_1\cdots dx_n}{x_0}$ by $\frac{1}{x_0^n}$, is
\begin{equation}
\label{equation_volume_element_hemisphere}
\pm dx_1\cdots dx_n/x_0^{n+1}.
\end{equation}
Notice that for $n$ odd, the coefficient is negative for $x_0<0$.

\subsection{Upper half-space model}
\label{section_upper_half_space}

The upper half-space model for $\mathbb{DH}^n$ comes from a stereographic projection
of the unit $n$-sphere of the hemisphere model from $(0,\dots,0,-1)$ to the plane $x_n=1$
(see Figure~\ref{figure_projection}):
\begin{equation}
\label{equation_upper_half_space_coordinate}
(x_0,\dots,x_{n-1},x_n)\mapsto(2x_0/(x_n+1),\dots,2x_{n-1}/(x_n+1)).
\end{equation}

\begin{figure}[h]
\centering
\resizebox{.4\textwidth}{!}
  {\input{fig_projection.pspdftex}}
\caption{Stereographic projection from hemisphere model to upper half-space model}
\label{figure_projection}
\end{figure}

A half-space is either the inside or the outside of a ball whose center is on $x_0=0$, 
or a Euclidean half-space whose face is a vertical plane to $x_0=0$.
We remark that the upper or lower half-space is not a \emph{half-space} in $\mathbb{DH}^n$.
The geodesics are either straight lines or circles that are perpendicular to $x_0=0$.
For both the upper half-space of $x_0>0$ and the lower half-space of $x_0<0$,
the associated metric is
\begin{equation}
\label{equation_ds_half_space}
ds=(dx_0^2+\cdots+dx_{n-1}^2)^{1/2}/x_0,
\end{equation}
and the associated volume element is 
\begin{equation}
\label{equation_volume_element_space}
dx_0\cdots dx_{n-1}/x_0^n.
\end{equation}
Notice that for $n$ odd the coefficient is negative for $x_0<0$.

\begin{remark}
\label{remark_conformally_equivalent}
We can also view the metric $ds$ (\ref{equation_ds_half_space}) as conformally equivalent to the standard metric 
$ds=(dx_0^2+\cdots+dx_{n-1}^2)^{1/2}$
on the Euclidean space (except at $x_0=0$) by a factor $1/x_0$
(referred as the \emph{conformal factor}). 
While $1/x_0$ is not continuous at $x_0=0$,
when extending to the lower half-space as a complex variable, it preserves the \emph{analyticity} of $x_0$
and thus in some sense makes the metric in (\ref{equation_ds_half_space}) ``conformal'' across $x_0=0$.
This also makes it possible to use complex analysis to compute the integral 
across $\partial\mathbb{H}^n$ with respect to $x_0$,
so heuristically making it a better choice for the extension than other factors like $|1/x_0|$.
It is also worth noting that this argument works not only for the Riemannian metric,
but also for the pseudo-Riemannian metric like the Lorentz metric.
\end{remark}

\begin{remark}
\label{remark_hemisphere_half_space}
Notice the similarity between the metric above of the upper half-space model 
(\ref{equation_ds_half_space}) and 
the metric of the hemisphere model (\ref{equation_ds_hemisphere}).
It suggests that a hemisphere model for $\mathbb{DH}^n$ can also be
viewed as \emph{embedded} in an upper half-space model 
for a higher dimensional $\mathbb{DH}^{n+1}$,
which will be helpful in understanding the proof of the \SDF{} 
for $\mathbb{DH}^n$ in later sections.
\end{remark}

\subsection{Klein model}
\label{section_Klein}

Finally, the \emph{Klein model} for $\mathbb{DH}^n$ comes from 
a central projection from the origin 
that maps the two-sheeted hyperboloid $\mathbb{H}^n$ and $\mathbb{H}^n_{-}$
to the disk: $x_1^2+\cdots+x_n^2<1, x_0=1$.
The boundary at infinity is mapped projectively 
to the boundary of the disk with a natural identification 
$\partial\mathbb{H}^n=\partial\mathbb{H}^n_{-}$.
The projection forms a double covering%
\footnote{In this paper the term ``double covering'' is used loosely 
to only refer to the open part of a space,
usually with the boundary points ignored,
and the metrics on the two covers need not agree.
}
of the disk,
where $\mathbb{H}^n$ is mapped to the upper and $\mathbb{H}^n_{-}$ to the lower cover,
with the length element $ds$ on the lower cover being negative.
We will use this model mainly for \emph{visualization} purposes
as a polytope always has ``flat'' faces as a Euclidean polytope does,
but we will not use its metric to compute the volume
which is rather cumbersome.

\section{A definition of $V_n(P)$ for polytopes in $\mathbb{DH}^n$}
\label{section_definition_volume_P}

\subsection{Using the upper half-space model}
\label{section_upper_half_space_complex}

In the upper half-space model, for a Lebesgue measurable set $U$ in $\mathbb{H}^n$,
the standard hyperbolic volume of $U$ is defined by integrating 
the volume element (\ref{equation_volume_element_space}) in $\mathbb{R}^n$
\[V_n(U)=\int_{U\subset\mathbb{R}^n}\frac{dx_0\cdots dx_{n-1}}{x_0^n}.
\]
The integral also applies to regions in $\mathbb{H}^n_{-}$.
For a set $U$ in $\mathbb{DH}^n$, if both $U_{+}$ in $\mathbb{H}^n$ 
and $U_{-}$ in $\mathbb{H}^n_{-}$ has \emph{finite} standard hyperbolic volume,
then we say that $U$ has finite standard hyperbolic volume
and define $V_n(U)$ by $V_n(U_{+})+V_n(U_{-})$,
and let $\mathcal{U}_0$ be the collection of all those sets. 
For $U\in\mathcal{U}_0$, $V_n(U)$ is invariant under isometry
(recall that an isometry of $\mathbb{DH}^n$ is an isometry of $\mathbb{H}^n$
that also preserves the counterpart in $\mathbb{H}^n_{-}$,
which is the mirror image in $\mathbb{H}^n_{-}$ in the upper half-space model).
But for regions sitting across $\partial\mathbb{H}^n$, the integral is not well defined at $x_0=0$.
To fix this issue, we want to define a volume as the integral of a complex perturbation 
of the volume element (as a complex measure on $\mathbb{R}^n$).
But we shall note that in the complex perturbation the underlying space is still real, even for coordinate $x_0$.
So in this paper complex analysis is used in a very limited way mainly to compute the integral of 
complex valued functions in the real space.
See also a similar but slightly different methodology called ``$\epsilon$-approximation''
employed by Cho and Kim \cite{ChoKim} using the Klein model.

For any $\epsilon\ne 0$, let $\mathbb{R}^n$ be endowed with a complex valued ``Riemannian metric''
\begin{equation}
\label{equation_ds_half_space_epsilon}
ds_{\epsilon}=(dx_0^2+\cdots+dx_{n-1}^2)^{1/2}/(x_0-\epsilon i),
\end{equation}
namely, the inner product on the tangent space at a point is
$1/(x_0-\epsilon i)^2$ times the standard Euclidean inner product.
It is conformally equivalent to the standard Euclidean metric
$ds=(dx_0^2+\cdots+dx_{n-1}^2)^{1/2}$ by a factor $1/(x_0-\epsilon i)$,
so the angle at each point is the same as the Euclidean metric, independent of the value $1/(x_0-\epsilon i)$.
The associated volume element of (\ref{equation_ds_half_space_epsilon})
is $\frac{dx_0\cdots dx_{n-1}}{(x_0-\epsilon i)^n}$.
One may view the (singular) metric (\ref{equation_ds_half_space})
on $\mathbb{R}^n$ as a limit of $ds_{\epsilon}$ with $\epsilon\to 0^+$,
and we study the volume as a limit of the complex valued volume induced by $ds_{\epsilon}$.

In the following we denote $U$ a Lebesgue measurable set in $\mathbb{R}^n$,
and define
\begin{equation}
\label{equation_volume_P_half_space_epsilon}
\mu_{\epsilon}(U)= \int_{U\subset \mathbb{R}^n}\frac{dx_0\cdots dx_{n-1}}{(x_0-\epsilon i)^n},
\quad\mu(U)=\lim_{\epsilon\to 0^+} \mu_{\epsilon}(U),
\end{equation}
whenever the integral exists (finite).

We call $U$ $\mu$-\emph{measurable} if $\mu(U)$ exists,
and let $\mathcal{U}$ be the collection of $\mu$-measurable sets.
Clearly $\mu$ is finitely additive on $\mathcal{U}$.
If $U$ is in a finite region in $\mathbb{R}^n$, 
as $|\frac{1}{(x_0-\epsilon i)^n}| \le |\frac{1}{\epsilon^n}|$,
then it automatically guarantees the existence of $\mu_{\epsilon}(U)$, 
but not of $\mu(U)$. 
We remark that $\mathbb{R}^n$ is not  $\mu$-measurable,
as $\mu_{\epsilon}(\mathbb{R}^n)$ does not exist.

For $U\in\mathcal{U}_0$ (namely, both $U_{+}$ and $U_{-}$ has finite standard hyperbolic volume,
and $V_n(U)=V_n(U_{+})+V_n(U_{-})$),
as $|\frac{1}{(x_0-\epsilon i)^n}| \le |\frac{1}{x_0^n}|$,
the Lebesgue dominated convergence theorem applies,
thus $\mu(U)$ exists and $\mu(U)=V_n(U)$.
So $\mathcal{U}_0\subset\mathcal{U}$.
Our goal is to extend $V_n(U)$ to more sets beyond $\mathcal{U}_0$,
preferably by using $\mu(U)$ on some $\mu$-measurable sets $U$,
but some issues need to be addressed.
First, any extension of $V_n(U)$ should be invariant under isometry, but we don't know if $\mu(U)$ 
is invariant under isometry for all $\mu$-measurable sets in $\mathcal{U}$.
Even if so, $\mathcal{U}$ is not an algebra in the sense that 
there are $\mu$-measurable sets $U$ and $U'$ such that
$U\cap U'$ is not $\mu$-measurable.%
\footnote{
Let $U$ be a unit ball centered at the origin.
For $U'$, let $U'_{+}=U_{+}$ and $U'_{-}$ be a translation of $U_{-}$ 
such that $U'_{-}$ is still centered on $x_0=0$ but $U'_{-}\cap U_{-}=\varnothing$. 
By Proposition~\ref{proposition_mu_u_P}, $U$ is $\mu$-measurable;
by (\ref{equation_volume_P_half_space_epsilon}) we have
$\mu_{\epsilon}(U')=\mu_{\epsilon}(U)$, so $U'$ is also $\mu$-measurable.
But  $U\cap U'=U_{+}$ is a half-space in $\mathbb{H}^n$, not $\mu$-measurable.
}

We will show that if we only extend $V_n(U)$ to those sets generated by half-spaces
in $\mathbb{DH}^n$ and elements in $\mathcal{U}_0$, then all those issues above will be resolved.
Without affecting the computation of volume,
here a half-space can be either a closed or open half-space,
and let $\mathcal{H}$ be the algebra over $\mathbb{DH}^n$
generated by half-spaces in $\mathbb{DH}^n$.
Recall from above that $\mathbb{R}^n$ is not $\mu$-measurable,
and as $\mathbb{DH}^n$ only has an extra point $\infty$ compared to $\mathbb{R}^n$,
so it does not affect the computation of the $\mu_{\epsilon}$-measure and
$\mu$-measure of $\mathbb{DH}^n$, and thus $\mathbb{DH}^n \not\in \mathcal{U}$.
In order to have a finite value of $V_n(\mathbb{DH}^n)$ as in Theorem~\ref{theorem_total_volume},
the definition of $V_n(U)$ need to deviate from $\mu(U)$ for certain sets $U$
when $\mu(U)$ does not exist.
However, this is a minor issue that will be handled in Definition~\ref{definition_volume_P}.

\begin{proposition}
\label{proposition_mu_u_P}
For $n\ge 1$ and $P\in\mathcal{H}$ in $\mathbb{DH}^n$,
if $P$ is in a finite region in $\mathbb{R}^n$, then $\mu(P)$ exists and 
is invariant under isometry.
\end{proposition}

Proposition~\ref{proposition_mu_u_P} is a special case of Theorem~\ref{theorem_volume_invariant}
that will be proved later. Now we define $V_n(P)$ as  a measurable algebra on $\mathcal{H}$. 

\begin{definition}
\label{definition_volume_P}
For $n\ge 1$ and $P\in\mathcal{H}$ in $\mathbb{DH}^n$,
if $P$ is in a finite region in $\mathbb{R}^n$, define $V_n(P)$ by $\mu(P)$;
otherwise we first map $P$ to a finite region in $\mathbb{R}^n$ by an isometry $g$ of $\mathbb{DH}^n$,
then define $V_n(P)$ by $\mu(g(P))$.
For $n=0$, define $V_0(\mathbb{DH}^0)=2$, the number of points in $\mathbb{DH}^0$.
\end{definition}

For $\mathbb{DH}^n$ whose volume cannot be computed as a single piece by Definition~\ref{definition_volume_P},
we can cut $\mathbb{DH}^n$ into two half-spaces
and compute the volume of each piece using Definition~\ref{definition_volume_P}
and then sum them up.
Proposition~\ref{proposition_mu_u_P} ensures that $V_n(P)$ is well defined
and finitely additive on $\mathcal{H}$.
(We will show in Example~\ref{example_not_countably_additive_P} that $V_n(P)$ 
is not countably additive.)

We check some simple cases.
For $n=1$, if $P$ is a half circle in $\mathbb{DH}^1$ and satisfies $-a\le x_0\le a$, we have
$\mu_{\epsilon}(P)=\int_{-a}^a\frac{dx_0}{x_0-\epsilon i}=\int_{-a-\epsilon i}^{a-\epsilon i}\frac{dx_0}{x_0}$.
For $\epsilon>0$, the path is \emph{below} the origin in the complex plane of $x_0$, and thus
$V_1(P)=\mu(P)=\lim_{\epsilon\to 0^+}\mu_{\epsilon}(P)=\pi i$,
independent of the value of $a$.
(We remark that this is not the unique way to define $V_1(P)$,
say if we define $V_1(P)$ by $\lim_{\epsilon\to 0^-}\mu_{\epsilon}(P)$ instead,
then $V_1(P)=-\pi i$.)
As $\mathbb{DH}^1$ is made of two half circles, this gives
\begin{equation}
\label{equation_total_volume_dh1}
V_1(\mathbb{DH}^1)=2\pi i=i V_1(\mathbb{S}^1).
\end{equation}
So under this definition
the total length of a 1-dimensional polytope in $\mathbb{DH}^1$
only takes the values of 0, $\pi i$, or $2\pi i$.

In general, if $P\in\mathcal{H}$ is in a finite region in $\mathbb{R}^n$,
and denote by $P_{+}$ and $P_{-}$ the upper and the lower portion of $P$ respectively,
we have (be aware of the signs below)
\begin{equation}
\label{equation_mu_epsilon_P_plus}
\mu_{\epsilon}(P)=\mu_{\epsilon}(P_{+})+\mu_{\epsilon}(P_{-})
=\mu_{\epsilon}(P_{+})+(-1)^n \mu_{-\epsilon}(P_{+}).
\end{equation}
Thus $\mu_{\epsilon}(P)=\mu_{-\epsilon}(P)$ for $n$ even
and $\mu_{\epsilon}(P)=-\mu_{-\epsilon}(P)$ for $n$ odd,
which suggests that the choice of the sign of $\epsilon$ only affects 
the definition of $V_n(P)$ for $n$ odd but not for $n$ even.

If $P_{+}$ has finite standard hyperbolic volume, 
for $n$ even then $V_n(P_{+})=V_n(P_{-})$,
thus $V_n(P)=2V_n(P_{+})$;
for $n$ odd then $V_n(P_{+})=-V_n(P_{-})$,
thus they cancel each other out and therefore $V_n(P)=0$.
But $V_n(P)$ is not trivial on $\mathcal{H}$ for $n$ odd,
as $V_n(P)$ is not zero when $P$ contains a non-trivial portion of $\partial\mathbb{H}^n$.

\subsection{Measure theory on $\mathbb{DH}^n$}

To make $V_n(P)$ on $\mathcal{H}$ truly an \emph{extension} 
of the standard hyperbolic volume, 
we show that the definition of $V_n(U)$ can be further extended to $\mathcal{H}'$,
the algebra over $\mathbb{DH}^n$ 
generated by $\mathcal{H}$ and $\mathcal{U}_0$.
If $U\in\mathcal{H}'$, denote $U^c$ the complement of $U$ in $\mathbb{DH}^n$
(not in $\mathbb{H}^n$, even if $U$ is entirely in $\mathbb{H}^n$).

\begin{proposition}
\label{proposition_mu_u_U}
For $n\ge 1$,
by Definition~\ref{definition_volume_P} of $V_n(P)$ on $\mathcal{H}$ in $\mathbb{DH}^n$,
the definition of $V_n(U)$ can be uniquely extended to $\mathcal{H}'$ as a measurable algebra.
Particularly for $P\in\mathcal{H}$ and $U_0\in\mathcal{U}_0$,
we have $V_n(P\setminus U_0)=V_n(P)-V_n(P\cap U_0)$.
\end{proposition}

\begin{proof}
For any $U\in\mathcal{H}'$, 
assume $U$ is generated by half-spaces $H_1,\dots, H_k$ and $U_1,\dots, U_l\in\mathcal{U}_0$.
Then by a property in set theory,
$U$ is the disjoint union of regions $E$ in the form $(\bigcap_{i=1}^k A_i)\cap (\bigcap_{j=1}^l B_j)$,
where $A_i$ is either $H_i$ or $H_i^c$ and $B_j$ is either $U_j$ or $U_j^c$.
For any of the region $E$, if at least one $B_j$ is $U_j$, 
then $E\in\mathcal{U}_0$ and thus $V_n(E)$ exists.

Now for a region $E$ assume all $B_j$ is $U_j^c$, and denote $\bigcup_{j=1}^l U_j$ by $U_0$
and $\bigcap_{i=1}^k A_i$ by $P$.
Then $P\in\mathcal{H}$ and $U_0\in\mathcal{U}_0$, and
\[E=P\cap (\bigcap U_j^c)=P\cap (\bigcup U_j)^c=P\cap U_0^c=P\setminus U_0.
\]
As $P$ is the disjoint union of $P\cap U_0$ and $E=P\setminus U_0$,
and both $V_n(P)$ (by Definition~\ref{definition_volume_P}) and $V_n(P\cap U_0)$ are well defined,
we define $V_n(E)=V_n(P)-V_n(P\cap U_0)$.
Sum up all those regions $E$, then $V_n(U)$ is uniquely defined on $\mathcal{H}'$ as a measurable algebra,
and is also invariant under isometry.
\end{proof}

\begin{remark}
In general, if $V_n(U)$ is already extended to an algebra $\mathcal{F}$ over $\mathbb{DH}^n$,
then by the same method in the proof of Proposition~\ref{proposition_mu_u_U},
we can show that $V_n(U)$ is also well defined on $\mathcal{F}'$, 
the algebra over $\mathbb{DH}^n$ generated by $\mathcal{F}$ and $\mathcal{U}_0$.
\end{remark}

To summarize, to ensure that $V_n(U)$ is well defined on $\mathcal{H}'$,
by Proposition~\ref{proposition_mu_u_U}
all left to do is to verify Proposition~\ref{proposition_mu_u_P},
which is a special case of Theorem~\ref{theorem_volume_invariant}.

\begin{remark}
\label{remark_union_polytopes}
As $P\in\mathcal{H}$ is generated by half-spaces in $\mathbb{DH}^n$,
so $P$ is the disjoint union of the intersection of half-spaces
(see also the proof of Proposition~\ref{proposition_mu_u_U} for reference),
thus $P$ can be cut into a union of polytopes.
So for convenience from now on for $P\in\mathcal{H}$ we can focus our attention on \emph{polytopes} $P$ only.
\end{remark}

\subsection{Using the hemisphere model}

We need to compute $V_n(P)$ in the hemisphere model as well, which we address next.
The Euclidean volume element on $\mathbb{S}^n$ can be written as $\pm \frac{dx_1\cdots dx_n}{x_0}$,
with the plus sign for $x_0>0$ and the minus sign for $x_0<0$ (see (\ref{equation_Euclidean_volume_element})).
When the above $ds_{\epsilon}$ in (\ref{equation_ds_half_space_epsilon})
 (of a higher dimensional $\mathbb{R}^{n+1}$) is restricted to $\mathbb{S}^n$,
the induced volume element on $\mathbb{S}^n$,
multiplying $\pm \frac{dx_1\cdots dx_n}{x_0}$ by $\frac{1}{(x_0-\epsilon i)^n}$,
is $\pm\frac{dx_1\cdots dx_n}{x_0(x_0-\epsilon i)^n}$.
Let $P$ be a polytope in $\mathbb{DH}^n$ in the hemisphere model, 
for $n\ge 0$ define
\begin{equation}
\label{equation_volume_P_hemisphere_epsilon}
\mu_{h,\epsilon}(P)= \int_{P\subset \mathbb{S}^n}\pm\frac{dx_1\cdots dx_n}{x_0(x_0-\epsilon i)^n},
\quad\mu_h(P)=\lim_{\epsilon\to 0^+} \mu_{h,\epsilon}(P).
\end{equation}

For the general case that a polytope $P$ in $\mathbb{DH}^n$
is on an \emph{non}-unit $n$-sphere $S^n_r$ with radius $r>0$
in $\mathbb{R}^{n+1}$ (and with the center on $x_0=0$),
we also provide a formula for $\mu_{h,\epsilon}(P)$,
which will be particularly useful when the $n$-sphere is treated as a subspace of a higher dimensional space.
Note that the constant curvature $\kappa$ of $S^n_r$ is still $-1$, independent of $r$.
The Euclidean volume element on $S^n_r$ is $\pm \frac{rdx_1\cdots dx_n}{x_0}$ 
(see (\ref{equation_Euclidean_volume_element})).
When $ds_{\epsilon}$ in (\ref{equation_ds_half_space_epsilon})
(of a higher dimensional $\mathbb{R}^{n+1}$) is restricted to $S^n_r$,
the induced volume element on $S^n_r$,
multiplying $\pm \frac{rdx_1\cdots dx_n}{x_0}$ by $\frac{1}{(x_0-\epsilon i)^n}$,
is $\pm\frac{rdx_1\cdots dx_n}{x_0(x_0-\epsilon i)^n}$.
Define
\begin{equation}
\label{equation_volume_P_hemisphere_r_epsilon}
\mu_{h,\epsilon}(P)
=\int_{P\subset S^n_r}\pm\frac{rdx_1\cdots dx_n}{x_0(x_0-\epsilon i)^n},
\quad\mu_h(P)=\lim_{\epsilon\to 0^+} \mu_{h,\epsilon}(P).
\end{equation}

In the hemisphere model,
for technical reasons we also introduce a variant of $\mu_h(P)$
that is more convenient to use in the later proof of the \SDF{} for $\mathbb{DH}^n$.
We directly work with the general case that $P$ is on an $n$-sphere $S^n_r$,
and define
\begin{equation}
\label{equation_volume_P_hemisphere_r_epsilon_variant}
\mu'_{h,\epsilon}(P)= \int_{P\subset S^n_r}\pm\frac{rdx_1\cdots dx_n}{(x_0-\epsilon i)^{n+1}},
\quad\mu'_h(P)=\lim_{\epsilon\to 0^+} \mu'_{h,\epsilon}(P).
\end{equation}
We will later show that $\mu_h(P)=\mu'_h(P)$ 
(Proposition~\ref{proposition_mu_h_prime_invariant}).

For $n=0$, in the upper half-space model, 
the definition of $\mu(\mathbb{DH}^0)$ is not needed for our purpose.
In the hemisphere model, for the 0-sphere $S^0_r$ with radius $r$,
by (\ref{equation_volume_P_hemisphere_r_epsilon})
and (\ref{equation_volume_P_hemisphere_r_epsilon_variant}) we have
\begin{equation}
\label{equation_epsilon_0}
\mu_{h,\epsilon}(S^0_r)=2
\quad\text{and}\quad
\mu'_{h,\epsilon}(S^0_r)=\frac{2r^2}{r^2+\epsilon^2}.
\end{equation}
So  
\begin{equation}
\label{equation_epsilon_0_bound}
\mu'_{h,\epsilon}(S^0_r)\le 2
\quad\text{and}\quad
\mu_h(S^0_r)=\mu'_h(S^0_r)=2.
\end{equation}

Notice that for $n\ge 1$, the values of $\mu_{\epsilon}(P)$, $\mu_{h,\epsilon}(P)$ and $\mu'_{h,\epsilon}(P)$ 
are not invariant under isometry.
But for $\mu(P)$, $\mu_h(P)$ and $\mu'_h(P)$, we have the following important property.

\begin{theorem}
\label{theorem_volume_invariant}
Let $P$ be a polytope in $\mathbb{DH}^n$.
In the upper half-space model we assume $P$ is in a finite region in $\mathbb{R}^n$.
For $n=0$, we are only concerned with the hemisphere model on $S^0_r$
but not the upper half-space model.
\begin{enumerate}
\item[(1)] Then $\mu(P)$, $\mu_h(P)$ and $\mu'_h(P)$ exist and are invariant under isometry,
including isometries between the models.
\item[(2)] \emph{(Uniform boundedness for a fixed $m$)}
For a fixed $m$,
if $P$ is the intersection of at most $m$ half-spaces, 
then $\mu_{\epsilon}(P)$, $\mu_{h,\epsilon}(P)$ and $\mu'_{h,\epsilon}(P)$
are uniformly bounded for all $P$ and $\epsilon>0$.
\end{enumerate}
\end{theorem}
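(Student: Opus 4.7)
The plan is to prove part (2) first by simultaneous induction on $n$ and $m$, then deduce part (1) from the uniform bound together with classical hyperbolic volume computations on polytopes disjoint from $\partial\mathbb{H}^n$. The base cases are $n=0$ (from (\ref{equation_epsilon_0})) and $n=1$ (by direct evaluation of $\int dx_0/(x_0-\epsilon i)$ on each of at most $m/2$ intervals comprising $P$, giving a logarithm bounded independently of $\epsilon$ and the endpoints). For the inductive step in the upper half-space model I would use that the integrand is exact on $\mathbb{R}^n\setminus\{x_0 = \epsilon i\}$: setting
\[\omega_\epsilon = -\frac{1}{(n-1)(x_0-\epsilon i)^{n-1}}\,dx_1\wedge\cdots\wedge dx_{n-1},\]
one has $d\omega_\epsilon = \frac{dx_0\wedge\cdots\wedge dx_{n-1}}{(x_0-\epsilon i)^n}$, so Stokes' theorem gives $\mu_{u,\epsilon}(P) = \int_{\partial P}\omega_\epsilon$. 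Every face of $\partial P$ lies on either a vertical hyperplane or a sphere $S^{n-1}_{r}$ centered on $\{x_0=0\}$. On vertical-hyperplane faces $\omega_\epsilon$ pulls back to zero, since its differential part is a wedge of $dx_1,\ldots,dx_{n-1}$ that degenerates once the hyperplane's linear equation eliminates one of these differentials. On a spherical face, the pullback is a bounded multiple of a $\mu'_{h,\epsilon}$-type integrand for the hemisphere model of $\mathbb{DH}^{n-1}$ on $S^{n-1}_r$, and the spherical face is itself a polytope in that hemisphere model defined by at most $m-1$ half-spaces (since each other boundary sphere of $P$ intersects $S^{n-1}_r$ in a sub-sphere again centered on $\{x_0=0\}$). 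Thus the inductive hypothesis bounds each face contribution, yielding the required uniform bound on $\mu_{u,\epsilon}(P)$. The hemisphere cases $\mu_{h,\epsilon}$ and $\mu'_{h,\epsilon}$ would be handled analogously, either by an analogous primitive on $S^n_r$ or, more uniformly, by embedding $S^n_r$ into a higher-dimensional upper half-space as in Remark \ref{remark_hemisphere_half_space} and reducing to the upper half-space case.

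For part (1), existence of the three limits would follow from the Stokes reduction of (2): convergence on each boundary face holds by inductive hypothesis, with the base case already handled by the contour interpretation in Section \ref{section_upper_half_space}. For isometry invariance I would first observe that for polytopes $P$ disjoint from $\partial\mathbb{H}^n$, dominated convergence identifies all three quantities with the classical hyperbolic volume on $P\cap\mathbb{H}^n$ plus the appropriate-sign contribution from $P\cap\mathbb{H}^n_{-}$, which is manifestly isometry-invariant. For a general polytope $P$, I would approximate it from within by polytopes $P_\delta$ obtained by excising a $\delta$-neighbourhood of $\partial\mathbb{H}^n$; the difference $\mu_u(P)-\mu_u(P_\delta)$ reduces via Stokes to a boundary integral localized near $\partial\mathbb{H}^n$, whose $\epsilon\to 0^+$ limit is isometry-invariant by inductive hypothesis. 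Sending $\delta\to 0$ propagates invariance from the interior to all of $P$, and the same mechanism forces $\mu_u(P)=\mu_h(P)=\mu'_h(P)$ through their common agreement on the strict interior.

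The main obstacle will be the isometries that drastically change the coordinate $x_0$ — dilations, inversions, the antipodal reflection swapping $\mathbb{H}^n\leftrightarrow\mathbb{H}^n_{-}$, and the cross-model stereographic projection. For fixed $\epsilon>0$ the perturbed integrand is visibly \emph{not} invariant under any of these, so invariance must be extracted carefully in the limit $\epsilon\to 0^+$. The Stokes reduction is essential here because it concentrates the sensitive part of the integration (the neighbourhood of $\partial\mathbb{H}^n$) onto a codimension-one subset where inductive invariance can be invoked. The hemisphere model, in which every isometry of $\mathbb{DH}^n$ acts as a signed orthogonal transformation of $\mathbb{R}^{n+1}$, is especially convenient for making the symmetry between the sheets $x_0>0$ and $x_0<0$ manifest and verifying that the two perturbations recombine coherently as $\epsilon\to 0^+$.
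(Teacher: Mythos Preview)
There is a genuine gap in your Stokes reduction for part (2). The pullback of $\omega_\epsilon$ to a spherical $(n-1)$-face $E\subset S^{n-1}_r$ has integrand $\frac{dx_1\cdots dx_{n-1}}{(x_0-\epsilon i)^{n-1}}$, whereas the $(n-1)$-dimensional $\mu'_{h,\epsilon}$-integrand (equation~(\ref{equation_volume_P_hemisphere_r_epsilon_variant}) with $n$ replaced by $n-1$) is $\pm\frac{r\,dx_1\cdots dx_{n-1}}{(x_0-\epsilon i)^{n}}$. Their ratio is $(x_0-\epsilon i)/r$, which is neither constant over $E$ nor uniformly bounded in $\epsilon$ and $r$; so the inductive hypothesis, which bounds $|\mu'_{h,\epsilon}(E)|$, does not bound your boundary integral, and your phrase ``bounded multiple'' elides this power mismatch. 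The paper avoids this by a two-step reduction: it first slices $P$ by a moving hyperplane $x_{n-1}=t$ and \emph{then} integrates by parts in $x_0$ within each $(n-1)$-dimensional slice $E_t$. That lands on $(n-2)$-dimensional faces $F$ with exactly the $\mu'_{h,\epsilon}$ integrand (see~(\ref{equation_volume_derivative_epsilon})--(\ref{equation_volume_epsilon})), and the outer $t$-integral is then controlled because each dihedral angle $\theta_F$ is monotone in $t$, giving $\int|d\theta_F/dt|\,dt\le\pi$. This combination of slicing plus angle monotonicity is the idea your argument is missing.

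Your invariance argument for part (1) also does not work as written. Excising a $\delta$-neighbourhood of $\partial\mathbb{H}^n$ (the slab $|x_0|<\delta$ in the upper half-space model) does not produce a polytope in $\mathbb{DH}^n$, since $\{x_0=\pm\delta\}$ is not the boundary of a half-space; hence neither the definition of $\mu_u$ nor the inductive hypothesis applies to $P_\delta$ or to the residual boundary pieces, and the ``boundary integral localized near $\partial\mathbb{H}^n$'' is not governed by any lower-dimensional $\mathbb{DH}^{n-1}$. The paper proceeds quite differently: it proves a second, \emph{radial} sweeping formula in which spheres $S^{n-1}_r$ replace the hyperplanes $x_{n-1}=t$ (Lemma~\ref{lemma_schlafli_special_epsilon_2} and Lemma~\ref{lemma_SDF_special_2}), notes that under inversion the $(n-2)$-face data on $S^{n-1}_r$ and $S^{n-1}_{1/r}$ correspond with $\theta_F\mapsto\pi-\theta_F$, and deduces that $\mu_u(P\cap B_r)+\mu_u(g(P)\cap B_{1/r})$ is constant in $r$, whence $\mu_u(P)=\mu_u(g(P))$. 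The equalities $\mu_h=\mu_u$ and $\mu'_h=\mu_h$ are then obtained separately (Proposition~\ref{proposition_mu_h_invariant}, Corollary~\ref{corollary_volume_hemisphere_equivalent}), each requiring its own estimate, rather than falling out of a single Stokes identity.
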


Proposition~\ref{proposition_mu_u_P} is a special case of Theorem~\ref{theorem_volume_invariant}.
See Remark~\ref{remark_union_polytopes}.

\begin{remark}
\label{remark_hemisphere_model_r} 
In the hemisphere model, Theorem~\ref{theorem_volume_invariant}
works \emph{uniformly} for all $n$-spheres $S^n_r$ with $r>0$.
This is because if we switch from $S^n_r$ to the unit $n$-sphere $\mathbb{S}^n$ by change of variables,
$r$ and $\epsilon$ can be combined into a single variable $\epsilon/r$,
so the information of $r$ is ``absorbed'' by $\epsilon$.
Then for convenience we can switch to work on $\mathbb{S}^n$ with $\epsilon$ only but not $r$.
\end{remark}

As the proof of Theorem~\ref{theorem_volume_invariant} is intertwined with 
our proof of the \SDF{} for $\mathbb{DH}^n$ (Theorem~\ref{theorem_schlafli}) in an inductive manner,
it is placed much later in Section~\ref{section_volume_invariant_proof}.
The induction step runs through 
Section~\ref{section_schlafli_special}--\ref{section_invariance_properties}.

\section{Proof overview of Theorem~\ref{theorem_volume_invariant} and \ref{theorem_schlafli}}

The \SDF{} plays a crucial role in this paper.
Consider a family of $n$-dimensional polytopes $P$ 
which vary smoothly in $M^n$ of constant curvature $\kappa$.
For each $(n-2)$-dimensional face $F$, let $\theta_F$ be the dihedral angle at $F$.
Then for $n\geq 2$, the \SDF{} states that
\begin{equation}
\label{equation_schlafli}
\kappa\cdot dV_n(P)=\frac{1}{n-1}\sum_{F}V_{n-2}(F)\,d\theta_F,
\end{equation}
where the sum is taken over all $(n-2)$-faces $F$ of $P$.
For $n-2=0$, $V_0(F)$ is the number of points in $F$.
Though the original \SDF{} did not include $\mathbb{H}^n_{-}$,
it is clear that the \SDF{} is true for $\mathbb{H}^n_{-}$ with $\kappa=-1$.

The \SDF{} was also generalized into other forms. 
Su{\'a}rez-Peir{\'o}~\cite{Suarez:deSitter} proved a \SDF{} for simplices in the de Sitter space
with Riemannian faces. 
Rivin and Schlenker~\cite{RivinSchlenker} obtained a smooth analogue of the \SDF{}
for the volume bounded by a hypersurface moving in a general Einstein manifold.
A \SDF{} for simplices in $M^n$ based on edge lengths
was obtained by the author \cite[Proposition 2.11]{Zhang:rigidity}.
For $\mathbb{DH}^n$, it has constant curvature $\kappa=-1$ everywhere
except on $\partial\mathbb{H}^n=\partial\mathbb{H}^n_{-}$,
so with a proper setting
it seems natural to expect a \SDF{} in some form for $\mathbb{DH}^n$ as well.
In Theorem~\ref{theorem_schlafli} we show that (\ref{equation_schlafli}) 
also holds for $\mathbb{DH}^n$.

Milnor \cite{Milnor:Schlafli} gave a very transparent proof
of the \SDF{} for $M^n$, and particularly for the hyperbolic case,
we essentially adopt its methodology with some adjustments,
and make the argument work for $\mathbb{DH}^n$ as well.
Our proof emphasizes on dealing with the differences between
$\mathbb{DH}^n$ and $\mathbb{H}^n$.
But unlike in $\mathbb{H}^n$ where convergence issues are generally not the main concern of proving \SDF{},
in $\mathbb{DH}^n$ we need to make efforts to show that $V_n(P)$ is well defined
(Theorem~\ref{theorem_volume_invariant}),
which makes our proof much longer than Milnor's original proof.

The main idea of our proof of Theorem~\ref{theorem_volume_invariant} is to compute
the $n$-dimensional volume of a polytope $P$ in $\mathbb{DH}^n$ by integrating the volumes
of lower dimensional faces through various special versions of \SDF{} for $\mathbb{DH}^n$.
This is the reason that why we prove both Theorem~\ref{theorem_volume_invariant} 
and \ref{theorem_schlafli} together with the proofs intertwined (with the focus more on the former).
Our proof runs by induction on $n$,
with dimension 0 and 1 of Theorem~\ref{theorem_volume_invariant} 
verified in Lemma~\ref{lemma_volume_invariant_0_1}.
The induction step runs through the entire
Section~\ref{section_schlafli_special} and \ref{section_invariance_properties}
by proving the statements of $\mu(P)$, $\mu_h(P)$ and $\mu'_h(P)$ inductively.
Those intermediate results that rely on the induction assumption 
are stated with ``Assuming Theorem~\ref{theorem_volume_invariant} is true up to \dots'' at the beginning.
But for those results not dependent on the induction assumption,
they can be freely applied to higher dimensions.
The proof of Theorem~\ref{theorem_schlafli} follows next
in Section~\ref{section_schlafli_double_proof}.

In the upper half-space model, to compute $\mu(P)$,
for convenience we will assume $P$ is in a finite region in $\mathbb{R}^n$ unless specified.

\section{Theorem~\ref{theorem_volume_invariant} for dimension 0 and 1}

We first prove Theorem~\ref{theorem_volume_invariant} for dimension 0 and 1.
The proof below is mainly computational and not used in other sections, 
the reader not interested in technicalities may skip it for now and come back later to check the details.

\begin{lemma}
\label{lemma_volume_invariant_0_1}
Theorem~\ref{theorem_volume_invariant} is true for dimension 0 and 1.
\end{lemma}

\begin{proof}
For $n=0$, by (\ref{equation_epsilon_0}) and (\ref{equation_epsilon_0_bound}),
Theorem~\ref{theorem_volume_invariant} is true.
For $n=1$, as a polytope in $\mathbb{DH}^1$ is either a half circle,
or the union or difference of two half circles, 
so we only need to show that for a half circle $P$, 
it satisfies the invariance and boundedness properties.

In the upper half-space model, assume $-a\le x_0\le a$ for $P$, then
\[\mu_{\epsilon}(P)=\int_{-a}^a\frac{dx_0}{x_0-\epsilon i}
=\int_0^a\left(\frac{1}{x_0-\epsilon i}-\frac{1}{x_0+\epsilon i}\right)dx_0
=\int_0^a\frac{2\epsilon i}{x_0^2+\epsilon^2}dx_0.
\]
Let 
\begin{equation}
\label{equation_b_x_t}
b(x,t):=\frac{2t}{x^2+t^2}.
\end{equation} 
As $\int_0^{\infty} b(x,1) dx=\pi$,
by change of variable (substitute $x_0$ by $x\epsilon$), then
\[ |\mu_{\epsilon}(P)|=\int_0^{a/\epsilon} \frac{2}{x^2+1} dx
=\int_0^{a/\epsilon} b(x,1) dx \le \pi
\]
and 
\[\mu(P)=\lim_{\epsilon\to 0^+} \mu_{\epsilon}(P)
=i \lim_{\epsilon\to 0^+} \int_0^{a/\epsilon} b(x,1) dx
=\pi i.
\]
So $\mu_{\epsilon}(P)$ is uniformly bounded and $\mu(P)=\pi i$.

In the hemisphere model, without loss of generality we use the unit 1-sphere $\mathbb{S}^1$
(see Remark~\ref{remark_hemisphere_model_r}).
Notice that any polytope in $\mathbb{DH}^1$ can be cut into two parts with $x_1\le 0$ and $x_1\ge 0$,
and each part is either a half circle or the difference of two half circles.
By symmetry, without loss of generality we further assume $P$ is a half circle
that satisfies $0\le t_0\le x_1\le 1$ for some $t_0$.
Let $g$ be a stereographic projection of the unit circle from $(0,-1)$ to the space $x_1=1$
(we use coordinate $y$ for this space, see Figure~\ref{figure_projection}): $g(x_0,x_1)=y$.
By  (\ref{equation_upper_half_space_coordinate}) $y=\frac{2x_0}{x_1+1}$.

We first consider $\mu_h(P)$ and $\mu_{h,\epsilon}(P)$ 
(then next $\mu'_h(P)$ and $\mu'_{h,\epsilon}(P)$).
By  (\ref{equation_volume_P_hemisphere_epsilon})
\[\mu_{h,\epsilon}(P)
=\int_{P\subset \mathbb{S}^1}\pm\frac{dx_1}{x_0(x_0-\epsilon i)}  
=\int_{P\subset \mathbb{S}^1}\pm\frac{x_0}{x_0-\epsilon i}\frac{dx_1}{x_0^2},
\]
with the plus sign for $x_0>0$ and the minus sign for $x_0<0$.
As $g$ is an isometry, so $(g^{-1})^{\ast}$ maps the volume element $\pm\frac{dx_1}{x_0^2}$ 
in the hemisphere model (on $\mathbb{S}^1$) into the volume element $\frac{dy}{y}$ 
in the upper half-space model with the appropriate orientation of each coordinate system.
Thus
\begin{equation}
\label{equation_mu_h_half_circle}
\mu_{h,\epsilon}(P)
=\int_{g(P)\subset \mathbb{R}}\frac{x_0}{x_0-\epsilon i}\frac{dy}{y}
=\int_{g(P)\subset \mathbb{R}}\frac{dy}{y-\frac{y}{x_0}\epsilon i}
=\int_{g(P)\subset \mathbb{R}}\frac{dy}{y-c(y)\epsilon i},
\end{equation}
where $c(y):=\frac{y}{x_0}$.
By (\ref{equation_upper_half_space_coordinate}) $c(y)=\frac{2}{x_1+1}$,
particularly $c(0)=1$.
As $0\le t_0\le x_1\le 1$, therefore $1\le c(y) \le \frac{2}{t_0+1}\le 2$.
Denote $\frac{2}{t_0+1}$ by $c_0$, so $1\le c(y)\le c_0\le 2$.

Assume $-a\le y\le a$ for $g(P)$, 
as $c(y)=c(-y)$, then by (\ref{equation_mu_h_half_circle})
\begin{equation}
\label{equation_mu_h_epsilon_P}
\mu_{h,\epsilon}(P)
=\int_0^a\left( \frac{1}{y-c(y)\epsilon i} - \frac{1}{y+c(y)\epsilon i} \right) dy
=\int_0^a\frac{2c(y)\epsilon i}{y^2+c(y)^2\epsilon^2} dy.
\end{equation}
By (\ref{equation_b_x_t}), for $\epsilon>0$
\begin{equation}
\label{equation_mu_h_epsilon_estimate}
|\mu_{h,\epsilon}(P)| = \int_0^a b(y,c(y)\epsilon) dy.
\end{equation}
Notice that for a fixed $y>0$, for $t>0$ the function $b(y,t)=\frac{2t}{y^2+t^2}$ peaks at $t=y$
and thus is increasing if $0< t\le y$ and decreasing if $t\ge y$.

Now fix $\epsilon>0$.
To show that (\ref{equation_mu_h_epsilon_estimate}) is uniformly bounded,
without loss of generality we assume $a>c_0\epsilon$.
As $1\le c(y) \le c_0$ (as shown above) and thus 
 $\epsilon \le c(y)\epsilon \le c_0\epsilon$,
accordingly we break the right side of (\ref{equation_mu_h_epsilon_estimate}) into three parts:
$y\le \epsilon$, $y\ge c_0\epsilon$, and $\epsilon< y< c_0\epsilon$.
For a fixed $y\le \epsilon$, because $b(y,t)$ is decreasing for $t$ if $t\ge y$ and $c(y)\epsilon\ge\epsilon$,
so $b(y,c(y)\epsilon)\le b(y,\epsilon)$, therefore
\begin{equation}
\label{equation_integration_b_eqsilon_1}
\int_0^{\epsilon} b(y,c(y)\epsilon) dy
\le \int_0^{\epsilon} b(y,\epsilon) dy
= \int_0^1 b(x,1) dx
\le \pi,
\end{equation}
where the second step is by change of variable.

Similarly for a fixed $y\ge c_0\epsilon$, 
because $b(y,t)$ is increasing for $t$ if $0<t\le y$ and $c(y)\epsilon\le c_0\epsilon$,
so $b(y,c(y)\epsilon)\le b(y,c_0\epsilon)$, hence
\begin{equation}
\label{equation_integration_b_eqsilon_2}
\int_{c_0\epsilon}^a b(y,c(y)\epsilon) dy
\le \int_{c_0\epsilon}^a b(y,c_0\epsilon) dy
= \int_1^{a/c_0\epsilon} b(x,1) dx
\le \pi,
\end{equation}
where the second step is again by change of variable.

Finally, for $\epsilon<y<c_0\epsilon$, because $1\le c(y)$ (as shown above),
\begin{equation}
\label{equation_integration_b_eqsilon_3}
\int_{\epsilon}^{c_0\epsilon} b(y,c(y)\epsilon) dy
\le \int_{\epsilon}^{c_0\epsilon} \frac{2c(y)\epsilon}{c(y)^2\epsilon^2} dy
\le \int_{\epsilon}^{c_0\epsilon} \frac{2\epsilon}{\epsilon^2} dy
=2(c_0-1).
\end{equation}
Adding (\ref{equation_integration_b_eqsilon_1}), (\ref{equation_integration_b_eqsilon_2}),
and (\ref{equation_integration_b_eqsilon_3}) together,
by (\ref{equation_mu_h_epsilon_estimate}) we have
$|\mu_{h,\epsilon}(P)|\le 2\pi+2(c_0-1)$.
As $c_0\le 2$ (see above), thus $\mu_{h,\epsilon}(P)$ is uniformly bounded. 

In (\ref{equation_mu_h_epsilon_P}) as $\epsilon\to 0^+$,
the left side is $\mu_h(P)$,
but the right side is independent of the value of $a$;
and since $c(0)=1$ (non-zero) and $c(y)$ is continuous at 0,
the right side only depends on the value of $c(y)$ at $y=0$,
so we may replace $c(y)$ with $c(0)=1$.
Then in (\ref{equation_mu_h_half_circle}) as $\epsilon\to 0^+$,
we have
\[\mu_h(P)=\lim_{\epsilon\to 0^+}\mu_{h,\epsilon}(P)
=\lim_{\epsilon\to 0^+}\int_{-a}^a \frac{dy}{y-\epsilon i}=\pi i.
\]
So $\mu_{h,\epsilon}(P)$ is uniformly bounded and $\mu_h(P)=\pi i$.

We next consider $\mu'_h(P)$ and $\mu'_{h,\epsilon}(P)$.
By (\ref{equation_volume_P_hemisphere_r_epsilon_variant}) (with $r=1$) we have
\[\mu'_{h,\epsilon}(P)
=\int_{P\subset \mathbb{S}^1}\pm\frac{dx_1}{(x_0-\epsilon i)^2}  
=\int_{P\subset \mathbb{S}^1}\pm\frac{x_0^2}{(x_0-\epsilon i)^2}\frac{dx_1}{x_0^2}.
\]
Again $(g^{-1})^{\ast}$ maps the volume element $\pm\frac{dx_1}{x_0^2}$
in the hemisphere model (on $\mathbb{S}^1$)
into the volume element $\frac{dy}{y}$ in the upper half-space model, thus
\begin{equation}
\label{equation_mu_h_half_circle_variant}
\mu'_{h,\epsilon}(P)
=\int_{g(P)\subset \mathbb{R}}\frac{x_0^2}{(x_0-\epsilon i)^2}\frac{dy}{y}
=\int_{g(P)\subset \mathbb{R}}\frac{y}{(y-\frac{y}{x_0}\epsilon i)^2}dy.
\end{equation}

Use the same notations of $-a\le y \le a$ and $c(y)=\frac{y}{x_0}$ as above, 
as $c(y)=c(-y)$,
\begin{equation}
\label{equation_mu_h_epsilon_P_variant}
\mu'_{h,\epsilon}(P)
=\int_0^a\left( \frac{y}{(y-c(y)\epsilon i)^2} - \frac{y}{(y+c(y)\epsilon i)^2} \right) dy   \\
=\int_0^a\frac{4y^2 c(y)\epsilon i}{(y^2+c(y)^2\epsilon^2)^2} dy.   \\
\end{equation}
So for $\epsilon>0$
\begin{equation}
\label{equation_mu_h_epsilon_estimate_variant}
|\mu'_{h,\epsilon}(P)|
\le \int_0^a\frac{4c(y)\epsilon}{y^2+c(y)^2\epsilon^2} dy
=2\int_0^a b(y,c(y)\epsilon) dy.
\end{equation}
By (\ref{equation_mu_h_epsilon_estimate}) we have
$|\mu'_{h,\epsilon}(P)|\le 2|\mu_{h,\epsilon}(P)|$.
As $\mu_{h,\epsilon}(P)$ is uniformly bounded,
so $\mu'_{h,\epsilon}(P)$ is uniformly bounded as well.

In (\ref{equation_mu_h_epsilon_P_variant}) as $\epsilon\to 0^+$,
the left side is $\mu'_h(P)$ but the right side is independent of the value of $a$.
As before, to compute $\mu'_h(P)$, 
in (\ref{equation_mu_h_half_circle_variant}) we replace $\frac{y}{x_0}$ with 1, then

\begin{align*}
\mu'_h(P)
&=\lim_{\epsilon\to 0^+} \mu'_{h,\epsilon}(P)    
=\lim_{\epsilon\to 0^+} \int_{-a}^a \frac{y}{(y-\epsilon i)^2}dy          \\
&=\lim_{\epsilon\to 0^+} -\int_{-a}^a y d\left(\frac{1}{y-\epsilon i}\right)     
=\lim_{\epsilon\to 0^+} \int_{-a}^a \frac{dy}{y-\epsilon i}     
=\pi i.
\end{align*}
So $\mu'_{h,\epsilon}(P)$ is uniformly bounded and $\mu'_h(P)=\pi i$.

Finally,  
$\mu(P)$, $\mu_h(P)$, and $\mu'_h(P)$ all agree with the same value $\pi i$,
so they are invariant under isometry.
Thus Theorem~\ref{theorem_volume_invariant} is true for dimension 1.
\end{proof}

\section{Special cases of \SDF{} for $\mathbb{DH}^n$}
\label{section_schlafli_special}

We first clarify some notions by pointing out some combinatorial differences between
$\mathbb{DH}^n$ and $\mathbb{H}^n$.
Recall that a polytope $P$ in $\mathbb{DH}^n$
is a finite intersection of closed half-spaces in $\mathbb{DH}^n$.
A \emph{face} of $P$ is an intersection of $P$ and the faces of some of the closed half-spaces.
(Due to the symmetry between $\mathbb{H}^n$ and $\mathbb{H}^n_{-}$, 
a 0-dimensional face is always a $\mathbb{DH}^0$ that contains a pair of vertices.)
When the intersection is a point on $\partial\mathbb{H}^n$, it is called an \emph{ideal vertex}, see Figure~\ref{figure_ideal}.
In $\mathbb{H}^n$ if we relax the convex polytope to include ideal vertices, 
then an ideal vertex is also a 0-dimensional face of the convex polytope.
But in $\mathbb{DH}^n$ we do not treat ideal vertex as a face of $P$,
because an ideal vertex is not a vertex of any 1-dimensional face of $P$.
Any face of $P$ in $\mathbb{DH}^n$ is a lower dimensional polytope itself.

\begin{figure}[h]
\centering
  \includegraphics[width=0.4\textwidth]{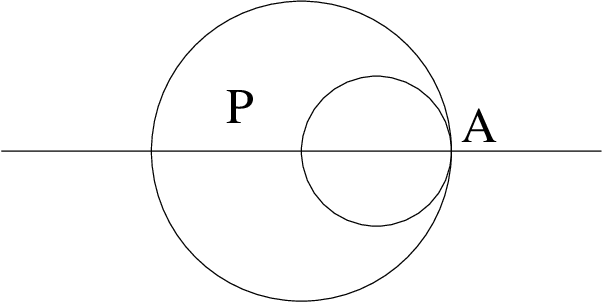}
\caption{In upper half-space model, $A$ is an ideal vertex of polytope $P$}
\label{figure_ideal}
\end{figure}

We next prove Theorem~\ref{theorem_volume_invariant} 
(with dimension 0 and 1 verified in Lemma~\ref{lemma_volume_invariant_0_1})
and \ref{theorem_schlafli} together by induction.
We start with some special cases of the \SDF{} for $\mathbb{DH}^n$.

\subsection{First special case}
\label{section_special_first}

Assuming Theorem~\ref{theorem_volume_invariant} is true 
up to dimension $n-1\ge 1$, then for dimension $n$,
we first prove a special case of \SDF{} of $\mu(P)$
in the upper half-space model.
Let $P_t$ be a family of polytopes in $\mathbb{DH}^n$
with a single parameter $t$ that is the intersection of 
a fixed $P$ with a moving half-space $x_{n-1}\le t$.
Here we allow $P$ to be any polytope in $\mathbb{DH}^n$ and with ideal vertices allowed,
but assume that $P$ is in a finite region in $\mathbb{R}^n$
that satisfies $t_0\le x_{n-1}\le t_1$.
The proof runs inductively on $n\geq 2$ with $V_0(\mathbb{DH}^0)=2$
and $V_1(\mathbb{DH}^1)=2\pi i$ as given (see (\ref{equation_total_volume_dh1})).

By (\ref{equation_volume_P_half_space_epsilon}) we have
\[\mu_{\epsilon}(P_t)= \int_{P_t\subset\mathbb{R}^n}\frac{dx_0\cdots dx_{n-1}}{(x_0-\epsilon i)^n},
\quad\mu(P_t)=\lim_{\epsilon\to 0^+} \mu_{\epsilon}(P_t).
\]
If $E_t$ is the facet (which may possibly contain two connected pieces)
of $P_t$ in the plane $x_{n-1}=t$, let
\begin{equation}
\label{equation_derivative_u_epsilon_t}
f_{\epsilon}(t):=\int_{E_t}\frac{dx_0\cdots dx_{n-2}}{(x_0-\epsilon i)^n},
\end{equation}
then observe that  
\begin{equation}
\label{equation_volume_epsilon_integration_derivative}
\mu_{\epsilon}(P_t)=\int_{t_0}^{t}f_{\epsilon}(t)dt
\quad\text{and}\quad
\frac{d\mu_{\epsilon}(P_t)}{dt}=f_{\epsilon}(t).
\end{equation}
On the right side of (\ref{equation_derivative_u_epsilon_t}),
integrating with respect to $x_0$, we obtain
\begin{equation}
\label{equation_volume_derivative_epsilon}
f_{\epsilon}(t)
=-\frac{1}{n-1}\int_{\partial E_t}\pm\frac{dx_1\cdots dx_{n-2}}{(x_0-\epsilon i)^{n-1}}.
\end{equation}
We next explain the ``$\pm$'' in (\ref{equation_volume_derivative_epsilon}).

Recall that in the upper half-space model
a half-space is either the inside or the outside of a ball, 
or a Euclidean half-space whose face is a vertical plane to $x_0=0$.
Respectively, we call the corresponding face 
\emph{top}, \emph{bottom}, or \emph{side},
for both the parts in the upper and lower half-spaces
(see Figure~\ref{figure_faces}).
In $E_t$, for a top $(n-2)$-dimensional face $F$,
the part in the upper half-space is counted positively
in (\ref{equation_volume_derivative_epsilon}),
and in the lower half-space is counted negatively.
For a bottom face $F$ of $E_t$,
the part in the upper half-space is counted negatively,
and in the lower half-space is counted positively.
The side faces of $E_t$, where $dx_1\cdots dx_{n-2}$ is 0,
do not contribute to (\ref{equation_volume_derivative_epsilon}).

\begin{figure}[h]
\centering
  \includegraphics[width=0.3\textwidth]{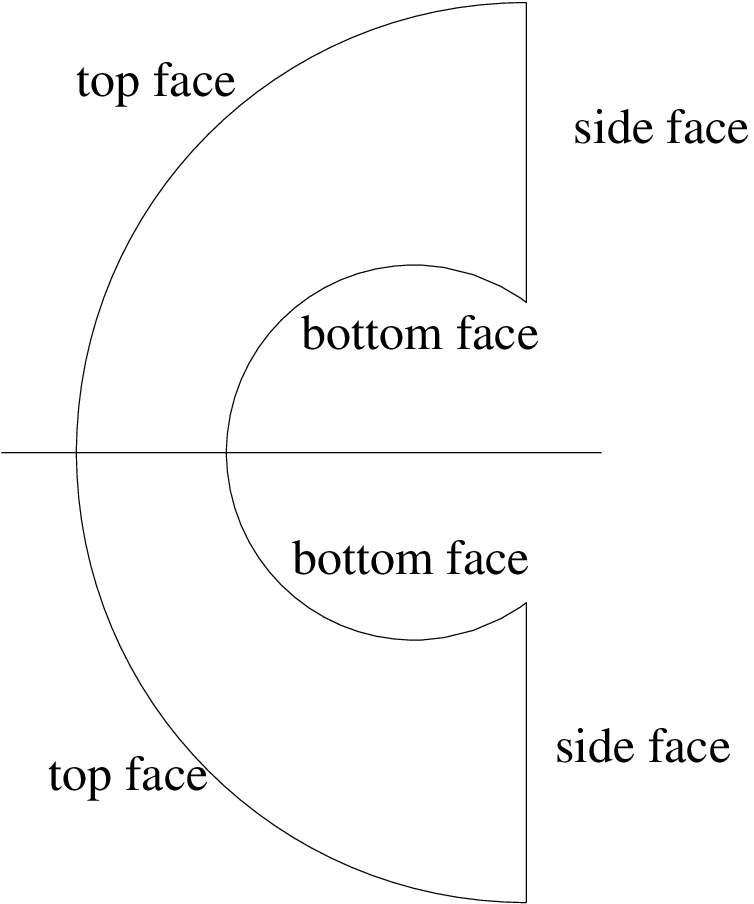}
\caption{The top, bottom, and side faces}
\label{figure_faces}
\end{figure}

Next we show that the contribution of any $(n-2)$-dimensional face $F$
to the integral in (\ref{equation_volume_derivative_epsilon}) is 
$\mu'_{h,\epsilon}(F)\frac{d\theta_F}{dt}$,
where $\mu'_{h,\epsilon}(F)$ is defined in (\ref{equation_volume_P_hemisphere_r_epsilon_variant})
and $\theta_F$ is the dihedral angle of $P_t$ at $F$. 
If $F$ is the intersection of $E_t$ and a fixed vertical plane, then $\theta_F$ is fixed over $t$
and thus as expected $\frac{d\theta_F}{dt}=0$.
Otherwise,  $F$ is the intersection of $E_t$ and a fixed $(n-1)$-sphere $E'$ with radius $r$,
where $F$ is on an $(n-2)$-sphere with radius $r_{\sss F}$ (see Figure~\ref{figure_angle}).

\begin{figure}[h]
\centering
\resizebox{.4\textwidth}{!}
  {\input{fig_angle.pspdftex}}
\caption{The dihedral angle $\theta_F$ at the intersection of $E_t$ and $E'$, assuming $E'$ is a top face of $P_t$}
\label{figure_angle}
\end{figure}

In (\ref{equation_volume_derivative_epsilon}),
by using (\ref{equation_volume_P_hemisphere_r_epsilon_variant}) for non-unit spheres, we have
\begin{equation}
\label{equation_volume_epsilon}
f_{\epsilon}(t)
=-\frac{1}{n-1}\sum_{F}\pm\frac{1}{r_{\sss F}}\mu'_{h,\epsilon}(F),
\end{equation}
with the plus sign for a top face and the minus sign for a bottom face,
where only those $(n-2)$-faces $F$ on $E_t$ in $x_{n-1}=t$ contribute to the formula.

By Figure~\ref{figure_angle}, it is easy to check that $\theta_F$ satisfies
\begin{equation}
\label{equation_theta_F_t}
\sin\theta_F=\frac{r_{\sss F}}{r},\quad -r\cos\theta_F=\pm(t-c),
\end{equation}
where $c$ is the coordinate $x_{n-1}$ of the center of $E'$,
with the plus sign if $E'$ is a top face of $P$ and the minus sign if $E'$ is a bottom face.
Differentiating the right hand equation with respect to $t$,
we obtain $r\sin\theta_F\cdot \frac{d\theta_F}{dt}=\pm 1$.
By the left hand equation, therefore $\frac{d\theta_F}{dt}=\pm \frac{1}{r_{\sss F}}$.
In (\ref{equation_volume_epsilon}) replacing $\pm \frac{1}{r_{\sss F}}$ with $\frac{d\theta_F}{dt}$, we have
\begin{equation}
\label{equation_schlafli_special_epsilon}
f_{\epsilon}(t)
=-\frac{1}{n-1}\sum_{F}\mu'_{h,\epsilon}(F)\,\frac{d\theta_F}{dt}.
\end{equation}
So by (\ref{equation_volume_epsilon_integration_derivative})
\begin{equation}
\label{equation_schlafli_special_derivative_epsilon}
\frac{d\mu_{\epsilon}(P_t)}{dt}
=-\frac{1}{n-1}\sum_{F}\mu'_{h,\epsilon}(F)\,\frac{d\theta_F}{dt}.
\end{equation}

Notice that $\frac{d\theta_F}{dt}$, whose value is $\pm \frac{1}{r_{\sss F}}$ above,
depends only on the value of $r_{\sss F}$ but not $r$ (the radius of $E'$),
though $\theta_F$ depends on both.
This is expected, as for a \emph{fixed} $t$, $f_{\epsilon}(t)$ depends on $F$
but not on how $F$ is fitted in $E'$;
particularly, $f_{\epsilon}(t)$ depends on $r_{\sss F}$ but not on $r$.
In fact, we have the following general statement.

\begin{lemma}
\label{lemma_differential_theta_indepentent}
For a polytope $P$ in $\mathbb{DH}^n$,
let $F$ be the intersection of two $(n-1)$-faces $E$ and $E'$, and $\theta_F$ be the dihedral angle.
If $E'$ is fixed but $E$ can have any small movement, then for a fixed $F$, 
$d\theta_F$ does not depend on (the choice of) $E'$.
\end{lemma}

\begin{proof}
This property does not depend on the model we use, for our convenience  we use the hyperboloid model.
In the upper sheet, let $u_{\sss E,P}$ be the inward unit normal to $P$ at its face $E$,
$u_{\sss F,E}$ be the inward unit normal to $E$ at its face $F$, and so on.
It is easy to check that
\[d\theta_F=u_{\sss F,E}\cdot du_{\sss E,P}+u_{\sss F,E'}\cdot du_{\sss E',P},
\]
a formula used by Alexander \cite{Alexander:Lipschitzian} to give a proof 
of the \SDF{} for the Euclidean case.
As $E'$ is fixed, so $du_{\sss E',P}=0$, thus $d\theta_F=u_{\sss F,E}\cdot du_{\sss E,P}$,
which does not depend on the choice of $E'$.
\end{proof}

Let $f(t)$ be the pointwise limit of $f_{\epsilon}(t)$ as $\epsilon\to 0^{+}$,
\begin{equation}
\label{equation_derivative_special_limit}
f(t)=\lim_{\epsilon\to 0^+} f_{\epsilon}(t),
\end{equation}
then by (\ref{equation_schlafli_special_epsilon}) we immediately have the following.

\begin{lemma}
Assuming Theorem~\ref{theorem_volume_invariant} is true up to dimension $n-1\ge 1$, 
then
\begin{equation}
\label{equation_schlafli_special}
f(t)=-\frac{1}{n-1}\sum_{F} V_{n-2}(F)\,\frac{d\theta_F}{dt},
\end{equation}
and $f(t)$ is continuous except at a finite number of points where $P_t$ changes its combinatorial type.
\end{lemma}

We remark that $f(t)$ may also not be bounded at those points when $\theta_F=0$ or $\pi$ where $r_{\sss F}=0$,
but we will show that $f(t)$ is still integrable.
We have the following special case of \SDF{} of $\mu(P)$.

\begin{lemma}
\label{lemma_SDF_special}
Assuming Theorem~\ref{theorem_volume_invariant} is true up to dimension $n-1\ge 1$.
For a fixed $m$, let $P$ be a polytope in $\mathbb{DH}^n$ 
and be the intersection of at most $m$ half-spaces.
In the upper half-space model 
let $P_t$ be the intersection of $P$ and $x_{n-1}\le t$,
and assume $P$ satisfies $t_0\le x_{n-1}\le t_1$.
\begin{enumerate}
\item[(1)]
Then $f(t)$ is integrable,
$\mu(P)$ exists and $\mu(P_t)$ is continuous for $t$ with $\mu(P_t)=\int_{t_0}^t f(t)dt$, 
and
\begin{equation}
\label{equation_SDF_special}
\kappa\cdot\frac{d\mu(P_t)}{dt}=\frac{1}{n-1}\sum_{F} V_{n-2}(F)\,\frac{d\theta_F}{dt},
\end{equation}
where the sum is taken over all $(n-2)$-faces $F$ of $P_t$ on $x_{n-1}=t$.
\item[(2)]
For a fixed $m$, $\mu_{\epsilon}(P)$ is uniformly bounded for all $P$ and $\epsilon>0$.
\end{enumerate}
\end{lemma}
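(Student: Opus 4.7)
The plan is to interchange the limit $\epsilon\to 0^+$ with the integral in $\mu_{u,\epsilon}(P_t) = \int_{t_0}^t f_\epsilon(s)\,ds$ from (\ref{equation_volume_epsilon_integration_derivative}) via a dominated convergence argument, using the explicit decomposition $f_\epsilon(t) = -\frac{1}{n-1}\sum_F \mu'_{h,\epsilon}(F)\,d\theta_F/dt$ from (\ref{equation_schlafli_special_derivative_epsilon}). Since $P_t$ is the intersection of at most $m+1$ half-spaces, every $(n-2)$-face $F$ of $P_t$ on $x_{n-1}=t$ is itself the intersection of at most $m$ half-spaces in its affine support, so the inductive hypothesis (Theorem~\ref{theorem_volume_invariant} in dimension $n-2$) applies to $F$: we have $|\mu'_{h,\epsilon}(F)|\le C_m$ uniformly in $\epsilon$, $t$, and the choice of $F$, and $\mu'_{h,\epsilon}(F)\to V_{n-2}(F)$ as $\epsilon\to 0^+$.

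The next step is to produce an $\epsilon$-independent, integrable majorant for $f_\epsilon$. By (\ref{equation_theta_F_t}), $\cos\theta_F$ is affine in $t$ wherever $F$ exists, so each dihedral angle $\theta_F(t)$ is monotone with values in $[0,\pi]$ on every interval on which the combinatorial type of $P_t$ is constant; since there are only finitely many combinatorial changes, with the count bounded in $m$, the total variation of each $\theta_F$ on $[t_0,t_1]$ is bounded by a constant depending only on $m$. Summing over the (at most quadratically many in $m$) faces $F$ yields
\begin{equation*}
|f_\epsilon(t)| \le \frac{C_m}{n-1}\sum_F \Big|\frac{d\theta_F}{dt}\Big| =: g(t), \qquad \int_{t_0}^{t_1}g(t)\,dt \le K_m,
\end{equation*}
for constants $C_m$ and $K_m$ depending only on $m$. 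Integrating gives $|\mu_{u,\epsilon}(P_t)|\le \frac{C_m K_m}{n-1}$, which is exactly the uniform bound needed for part (2).

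For part (1), away from the finite set of critical values of $t$ at which $P_t$ changes combinatorial type, the induction hypothesis delivers the pointwise convergence $f_\epsilon(t)\to f(t)$ with $f$ as in (\ref{equation_schlafli_special}). Dominated convergence against the majorant $g$ then gives
\begin{equation*}
\mu_u(P_t) \;=\; \lim_{\epsilon\to 0^+}\int_{t_0}^t f_\epsilon(s)\,ds \;=\; \int_{t_0}^t f(s)\,ds,
\end{equation*}
so $\mu_u(P_t)$ exists, is absolutely continuous in $t$, and satisfies $d\mu_u(P_t)/dt = f(t)$ at every point of continuity of $f$; substituting $\kappa=-1$ and the expression for $f$ produces (\ref{equation_SDF_special}).

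The main obstacle is the bookkeeping at the critical values of $t$ where $P_t$ changes combinatorial type. There, $(n-2)$-faces are born or die, $r_F$ may tend to $0$ and force $d\theta_F/dt$ to blow up, and the pointwise convergence of $f_\epsilon$ can fail; one must verify that this exceptional set is finite, that the total variation of each $\theta_F$ remains bounded across it (which the linearity of $\cos\theta_F$ in $t$ supplies), and that the exceptional set has Lebesgue measure zero so the dominated-convergence step goes through. Once these combinatorial subtleties are settled, the remainder is routine bookkeeping combining the induction hypotheses with the fundamental theorem of calculus.
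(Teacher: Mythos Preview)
Your proposal is correct and follows essentially the same route as the paper: bound $|f_\epsilon(t)|$ by $\frac{C_m}{n-1}\sum_F |d\theta_F/dt|$ via the inductive uniform bound on $\mu'_{h,\epsilon}(F)$, control $\int|d\theta_F/dt|\,dt$ through monotonicity of $\theta_F$, then apply dominated convergence. Two minor sharpenings from the paper: (i) the number of $(n-2)$-faces $F$ on $x_{n-1}=t$ is at most $m$, not quadratic in $m$, since each such $F$ is $E_t$ intersected with one of the $m$ facets of $P$; (ii) because $\cos\theta_F$ is affine in $t$ by (\ref{equation_theta_F_t}), each $\theta_F$ is globally monotone on its interval of existence, so its total variation is simply $\le\pi$ and no $m$-dependent accounting of combinatorial changes is needed for the majorant.
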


\begin{proof}
We first prove (1). 
As it is assumed that Theorem~\ref{theorem_volume_invariant} is true for dimension $n-2$,
there is a constant $c>0$, depending only on $m$ but not $P$ and $\epsilon$,
such that $|\mu'_{h,\epsilon}(F)|\le c$ for all $(n-2)$-faces $F$ of $P_t$.
By (\ref{equation_schlafli_special_epsilon}), we have 
\[|f_{\epsilon}(t)|\le \frac{c}{n-1}\sum_{F}\left|\frac{d\theta_F}{dt}\right|.
\]
For any non-empty $(n-2)$-face $F$, as $\theta_F$ is a monotonic function of $t$
that takes values in an interval between 0 and $\pi$, 
therefore in this interval $|\frac{d\theta_F}{dt}|$ is integrable and
\[\int \left|\frac{d\theta_F}{dt}\right| dt\le\pi.
\]
Set $g(t)=\frac{c}{n-1}\sum_{F}|\frac{d\theta_F}{dt}|$, then  $|f_{\epsilon}(t)| \le g(t)$.
As there are at most $m$ half-spaces, so there are at most $m$ such $(n-2)$-faces $F$ of $P_t$ on $x_{n-1}=t$,
thus we have
\[\int g(t)\,dt \le \frac{c m\pi}{n-1}<\infty.
\]
Therefore Lebesgue dominated convergence theorem applies to $\{f_{\epsilon}\}$ 
and $f$ is integrable, 
and by (\ref{equation_volume_epsilon_integration_derivative})
and (\ref{equation_derivative_special_limit})
\[\mu(P_t)=\lim_{\epsilon\to 0^+} \mu_{\epsilon}(P_t)
=\lim_{\epsilon\to 0^+}\int_{t_0}^t f_{\epsilon}(t)dt=\int_{t_0}^t f(t)dt.
\]
So $\mu(P)$ exists and $\mu(P_t)$ is continuous for $t$
with $\mu(P_t)=\int_{t_0}^t f(t)dt$.
Then $\frac{d\mu(P_t)}{dt}=f(t)$ except at 
a finite number of points where $P_t$ changes its combinatorial type
(and $f(t)$ may not be continuous or even bounded).
Then in (\ref{equation_schlafli_special}) multiplying by $\kappa=-1$,
we prove (\ref{equation_SDF_special}). This finishes the proof of (1).

By (\ref{equation_volume_epsilon_integration_derivative}) we also have
$|\mu_{\epsilon}(P)| \le \int|f_{\epsilon}|dt \le \int g dt \le \frac{c m\pi }{n-1}$.
So for a fixed $m$, $\mu_{\epsilon}(P)$ is uniformly bounded.
This proves (2) and finishes the proof.
\end{proof}

\begin{remark}
Due to the rotational symmetry of $\mu_{\epsilon}(P)$, 
Lemma~\ref{lemma_SDF_special} may apply 
to any vertical plane to $x_0=0$ that sweeps through $P$ parallelly,
not just for plane $x_{n-1}=t$.
\end{remark}

We have the following result that will be useful for later use,
for both the hemisphere model and the upper half-space model.

\begin{lemma}
\label{lemma_f_u_epsilon_limit}
Assuming Theorem~\ref{theorem_volume_invariant} is true up to dimension $n-1\ge 1$.
For a fixed $m$, let $E$ be an $(n-1)$-dimensional polytope (in $\mathbb{DH}^{n-1}$)
on $S^{n-1}_r$ and be the intersection of at most $m$ half-spaces. 
Then $\frac{\epsilon}{r}\mu'_{h,\epsilon}(E)$ is uniformly bounded for all $r$, $E$ and $\epsilon>0$.
\end{lemma}

\begin{proof}
By (\ref{equation_volume_P_hemisphere_r_epsilon_variant}) we have

\begin{align*}
\left|\frac{\epsilon}{r} \mu'_{h,\epsilon}(E)\right| 
&= \left| \int_{E\subset S^{n-1}_r} \pm\frac{\epsilon dx_1\cdots dx_{n-1}}{(x_0-\epsilon i)^n} \right|
\le \frac{\epsilon}{\epsilon^n} \int_{E\subset S^{n-1}_r} dx_1\cdots dx_{n-1}      \\
&\le \frac{r^{n-1}}{\epsilon^{n-1}} \int_{\mathbb{S}^{n-1}} dx_1\cdots dx_{n-1}     
=  \frac{r^{n-1}}{\epsilon^{n-1}}\cdot 2V_{n-1}(\mathbb{B}^{n-1}),
\end{align*}
where $V_{n-1}(\mathbb{B}^{n-1})$ is the Euclidean volume of the unit $(n-1)$-ball $\mathbb{B}^{n-1}$,
and $V_0(\mathbb{B}^0)=1$. Denote $2 V_{n-1}(\mathbb{B}^{n-1})$ by $c_1$.
On the other hand, 
as  it is assumed that Theorem~\ref{theorem_volume_invariant} is true for dimension $n-1$,
there is a constant $c_2>0$ such that 
 $|\mu'_{h,\epsilon}(E)| \le c_2$ for all $E$ and $\epsilon>0$.
 So
 \[\left|\frac{\epsilon}{r} \mu'_{h,\epsilon}(E)\right| 
 \le \min\left\{c_1\frac{r^{n-1}}{\epsilon^{n-1}}, c_2\frac{\epsilon}{r} \right\}
 \le \left(c_1\frac{r^{n-1}}{\epsilon^{n-1}}\right)^{\frac{1}{n}} \cdot \left(c_2\frac{\epsilon}{r}\right)^{\frac{n-1}{n}}
 =c_1^\frac{1}{n}\cdot c_2^{\frac{n-1}{n}},
 \]
where the second step is the weighted geometric mean of 
$c_1\frac{r^{n-1}}{\epsilon^{n-1}}$ and $c_2\frac{\epsilon}{r}$
with weights $\frac{1}{n}$ and $\frac{n-1}{n}$.
Then for a fixed $m$, $\frac{\epsilon}{r} \mu'_{h,\epsilon}(E)$ is uniformly bounded.
\end{proof}

\subsection{Second special case}
\label{section_special_second}

Before proving that $\mu(P)$ is invariant under isometry,
we show that $\mu(P)$ also satisfies a special \SDF{} in the radial direction.
For $n\ge 2$, let $B_r$ be a ball centered at $O$ with radius $r>0$ where 
$r^2=x_0^2+\cdots+x_{n-1}^2$, and $S^{n-1}_r$ be the spherical boundary of $B_r$.
Denote the intersection of $P$ and $B_r$ by $P'_r$.
We have the following important generalization of (\ref{equation_schlafli_special_derivative_epsilon}).

\begin{lemma}
\label{lemma_schlafli_epsilon}
For $n\ge 2$, let $E$ be the $(n-1)$-face of $P'_r$ on $S^{n-1}_r$. Then
\begin{equation}
\label{equation_schlafli_epsilon}
\frac{d\mu_{\epsilon}(P'_r)}{dr}
=-\frac{1}{n-1}\sum_{F}\mu'_{h,\epsilon}(F)\,\frac{d\theta_F}{dr}
+\frac{\epsilon i}{r^2}\mu'_{h,\epsilon}(E),
\end{equation}
where  the sum is taken over all $(n-2)$-faces $F$ on $E$,
with $\theta_F$ the dihedral angle.
\end{lemma}

\begin{proof}
We only need to prove the case at a fixed $r=r_0$ when $E$ is non-empty.
Let $F_i$ be the intersection of $E$ and a fixed $(n-1)$-face $E_i$,
then $F_i$ lies on the intersection of $E$ and a \emph{vertical} plane $H_i$ to $x_0=0$
(see Figure~\ref{figure_polar}).
At a fixed $r=r_0$, by Lemma~\ref{lemma_differential_theta_indepentent},
the value of $\frac{d\theta_{F_i}}{dr}$ does not depend on how $F_i$ is fitted in $E_i$,
and thus $\frac{d\theta_{F_i}}{dr}$ can be computed as if $E_i$ is replaced by a fixed $H_i$;
and for a fixed $\epsilon>0$, because the coefficient of the volume element of $\mu_{\epsilon}$ 
is $\frac{1}{(x_0-\epsilon i)^n}$ and bounded by $\frac{1}{\epsilon^n}$ from above, so
$\left.\frac{d\mu_{\epsilon}(P'_r)}{dr}\right\vert_{r=r_0}$ does not depend on how $F_i$ is fitted in $E_i$ either.
So to prove (\ref{equation_schlafli_epsilon}), without loss of generality we assume that for $P'_r$,
except for $E$, all remaining $(n-1)$-faces are side faces on fixed $H_i$.

\begin{figure}[h]
\centering
\resizebox{.4\textwidth}{!}
  {\input{fig_polar.pspdftex}}
\caption{$E_i$ is replaced by a fixed vertical plane $H_i$}
\label{figure_polar}
\end{figure}

When $r$ varies at $r_0$, for $P'_r$, $E$ varies but $H_i$ are fixed.
By (\ref{equation_volume_P_half_space_epsilon}), 
notice that $\mu_{\epsilon}(P'_r)$ is ``almost'' invariant under \emph{similarity},
in the sense that only $\epsilon$ needs to be adjusted properly.
This property suggests that,
by a composition of the following two steps,
$\left.\frac{d\mu_{\epsilon}(P'_r)}{dr}\right\vert_{r=r_0}$ can be computed 
as if $E$ is fixed but $H_i$ are moving \emph{parallelly}
(at paces that preserve $\frac{d\theta_{F_i}}{dr}$ as before),
but $\epsilon$ needs to be adjusted to $\frac{r_0}{r}\epsilon$ over $r$.
When varying $H_i$ parallelly the dihedral angles between the side faces do not change, 
so by applying (\ref{equation_schlafli_special_derivative_epsilon})
to all $H_i$ with the adjustment of $\epsilon$ to $P'_{r_0}$,
we have
\begin{equation}
\label{equation_derivative_volume_P'_epsilon_t}
\left.\frac{d\mu_{\epsilon}(P'_r)}{dr}\right\vert_{r=r_0}
=-\frac{1}{n-1}\sum_{F_i} \mu'_{h,\epsilon}(F_i)\,\left.\frac{d\theta_{F_i}}{dr}\right\vert_{r=r_0}
+\left.\frac{d\mu_{r_0\epsilon/r}(P'_{r_0})}{dr}\right\vert_{r=r_0}.
\end{equation}

By (\ref{equation_volume_P_half_space_epsilon}), we have


\[\left.\frac{d\mu_{r_0\epsilon/r}(P'_{r_0})}{dr}\right\vert_{r=r_0}
=\left. -n\frac{r_0\epsilon i}{r^2}\int_{P'_{r_0}}\frac{dx_0\cdots dx_{n-1}}{(x_0-\frac{r_0}{r}\epsilon i)^{n+1}} \right\vert_{r=r_0}.
\]
Denote $E$ at $r=r_0$ by $E_0$, the above becomes
\[-n\frac{\epsilon i}{r_0^2}\int_{P'_{r_0}}\frac{r_0 dx_0\cdots dx_{n-1}}{(x_0-\epsilon i)^{n+1}}
=\frac{\epsilon i}{r_0^2}\int_{E_0} \pm\frac{r_0 dx_1\cdots dx_{n-1}}{(x_0-\epsilon i)^n}
=\frac{\epsilon i}{r_0^2}\mu'_{h,\epsilon}(E_0),
\]
where the first step is by integrating with respect to $x_0$,
and the last step is by applying (\ref{equation_volume_P_hemisphere_r_epsilon_variant}) to $E_0$.
Plug it into (\ref{equation_derivative_volume_P'_epsilon_t}),
we then prove (\ref{equation_schlafli_epsilon}) at $r=r_0$.
\end{proof}

Let 
\begin{equation}
\label{equation_derivative_u_epsilon_r}
a_{\epsilon}(r)
:=\int_{P\cap S^{n-1}_r}\frac{dA_r}{(x_0-\epsilon i)^n},
\end{equation}
where $dA_r$ is the Euclidean volume element on $(n-1)$-sphere $S^{n-1}_r$.
Then by using polar coordinates to $\mu_{\epsilon}(P'_r)$, for any $r_0>0$ we have
\begin{equation}
\label{equation_volume_epsilon_integration_derivative_2}
\mu_{\epsilon}(P'_r)-\mu_{\epsilon}(P'_{r_0})
=\int_{r_0}^{r} a_{\epsilon}(r)dr
\quad\text{and}\quad
\frac{d\mu_{\epsilon}(P'_r)}{dr}=a_{\epsilon}(r).
\end{equation}
Adding this $r_0>0$ is for handling convergence issues that may arise at $r=0$ at later moments.
So $a_{\epsilon}(r)$ is the derivative of $\mu_{\epsilon}(P)$ with respect to $r$ in the radial direction,
while $f_{\epsilon}(t)$ in a side direction (see (\ref{equation_volume_epsilon_integration_derivative})).
By Lemma~\ref{lemma_schlafli_epsilon} and (\ref{equation_volume_epsilon_integration_derivative_2}),
we immediately have the following.

\begin{lemma}
\label{lemma_schlafli_special_epsilon_2}
For $n\ge 2$ and $r>0$, 
\begin{equation}
\label{equation_schlafli_special_epsilon_2}
a_{\epsilon}(r)
=-\frac{1}{n-1}\sum_{F}\mu'_{h,\epsilon}(F)\,\frac{d\theta_F}{dr}
+\frac{\epsilon i}{r^2}\mu'_{h,\epsilon}(E),
\end{equation}
where $E=P\cap S^{n-1}_r$, 
and the sum is taken over all $(n-2)$-faces $F$ on $E$,
with $\theta_F$ the dihedral angle at $F$.
\end{lemma}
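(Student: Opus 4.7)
The plan is to parallel the derivation of (\ref{equation_schlafli_special_epsilon}) by reducing $a_\epsilon(r)$ to boundary integrals over the $(n-2)$-faces of $E = P\cap S^{n-1}_r$ via Stokes' theorem, now applied on the curved surface $S^{n-1}_r$ rather than a flat hyperplane. The spherical curvature introduces a correction to the natural antiderivative that will account for exactly the extra term $\frac{\epsilon i}{r^2}\mu'_{h,\epsilon}(E)$ in (\ref{equation_schlafli_special_epsilon_2}).

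First rewrite $a_\epsilon(r)=\int_E (x_0-\epsilon i)^{-n}\,dA_S$, where $dA_S=r^{n-1}\,d\theta_{n-1}$ is the induced area form on $S^{n-1}_r$. Let $X=\sum_{i=0}^{n-1} x_i\partial_i$ be the radial vector field and set $Y:=e_0-\frac{x_0}{r^2}X$, the tangential projection of $\partial/\partial x_0$ to the sphere; equivalently $Y=\nabla_S x_0$. A direct computation gives
\[
Y(x_0)=\frac{r^2-x_0^2}{r^2},\qquad \operatorname{div}_S Y=\Delta_S x_0=-\frac{(n-1)x_0}{r^2}.
\]
Define the $(n-2)$-form on $S^{n-1}_r$
\[
\omega := -\frac{1}{(n-1)(x_0-\epsilon i)^{n-1}}\,\iota_Y\, dA_S.
\]
Using $d_S x_0\wedge\iota_Y dA_S=Y(x_0)\,dA_S$ (which follows from $d_S x_0\wedge dA_S=0$) and $d(\iota_Y dA_S)=(\operatorname{div}_S Y)\,dA_S$ (Cartan's formula, since $dA_S$ is closed), Leibniz yields
\[
d\omega=\frac{Y(x_0)}{(x_0-\epsilon i)^n}\,dA_S-\frac{\operatorname{div}_S Y}{(n-1)(x_0-\epsilon i)^{n-1}}\,dA_S.
\]
Substituting and combining over the common denominator $r^2(x_0-\epsilon i)^n$, the numerator collapses to $(r^2-x_0^2)+x_0(x_0-\epsilon i)=r^2-\epsilon i\,x_0$, giving
\[
d\omega=\frac{dA_S}{(x_0-\epsilon i)^n}-\frac{\epsilon i\,x_0\,dA_S}{r^2(x_0-\epsilon i)^n}.
\]
Since $x_0\,dA_S=\pm r\,dx_1\wedge\cdots\wedge dx_{n-1}$ on the upper (respectively lower) hemisphere, the second term integrates over $E$ to exactly $-\frac{\epsilon i}{r^2}\mu'_{h,\epsilon}(E)$ by (\ref{equation_volume_P_hemisphere_r_epsilon_variant}) applied to the $(n-1)$-dimensional $E$ on $S^{n-1}_r$.

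Stokes' theorem $\int_E d\omega=\int_{\partial E}\omega$ then rearranges to
\[
a_\epsilon(r)=\int_{\partial E}\omega+\frac{\epsilon i}{r^2}\mu'_{h,\epsilon}(E).
\]
For each $(n-2)$-face $F\subset\partial E$ with outward unit conormal $\nu_F$ in $S^{n-1}_r$, the identity $\iota_Y dA_S|_F=\langle Y,\nu_F\rangle\,dA_F$ together with $\langle X,\nu_F\rangle=0$ (since $\nu_F$ is tangent to the sphere, hence Euclidean-orthogonal to $X$) reduces $\langle Y,\nu_F\rangle$ to $(\nu_F)_0$, the $x_0$-component of $\nu_F$, so
\[
\int_F\omega = -\frac{1}{n-1}\int_F\frac{(\nu_F)_0\,dA_F}{(x_0-\epsilon i)^{n-1}}.
\]
The main obstacle is the pointwise geometric identification
\[
(\nu_F)_0\,dA_F = r_F\frac{d\theta_F}{dr}\cdot(\pm dx_1\wedge\cdots\wedge dx_{n-2}),
\]
the spherical counterpart of $d\theta_F/dt=\pm 1/r_F$ used in the flat case (cf. (\ref{equation_theta_F_t})). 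This is verified by decomposing $\nu_F$ in the plane spanned by $\hat x=x/r$ and the outward unit normal to the other bounding facet $G_F$ (a hyperplane or sphere) meeting $S^{n-1}_r$ at $F$, and differentiating the angle relations $\sin\theta_F=r_F/r$ and the corresponding distance relation. Plugging into the definition of $\mu'_{h,\epsilon}(F)$ and summing over all $(n-2)$-faces $F$ then yields (\ref{equation_schlafli_special_epsilon_2}).
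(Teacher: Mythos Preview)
Your proof is correct and takes a genuinely different route from the paper's. The paper introduces an auxiliary polytope $Q_r$ bounded by $E=P\cap S^{n-1}_r$ together with the vertical hyperplanes $H_i$ through each $(n-2)$-face $F_i$, and then splits $\frac{d}{dr}\mu_{u,\epsilon}(P'_r)$ into $\frac{d}{dr}\mu_{u,\epsilon}(Q_r)$ plus $\frac{d}{dr}\mu_{u,\epsilon}(P'_r\setminus Q_r)$; the second piece is handled by the flat first special case (\ref{equation_schlafli_special_derivative_epsilon}) directly, while for the first piece the paper exploits that $\mu_{u,\epsilon}$ is invariant under dilation once $\epsilon$ is rescaled to $\frac{r_0}{r}\epsilon$, reducing again to the flat case plus the correction $\frac{d}{dr}\mu_{u,r_0\epsilon/r}(Q_{r_0})\big|_{r=r_0}$, which a short integration in $x_0$ turns into $\frac{\epsilon i}{r^2}\mu'_{h,\epsilon}(E)$. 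Your approach bypasses $Q_r$ and the scaling trick entirely: you construct an explicit primitive $\omega$ on $S^{n-1}_r$ and apply Stokes once, with the extra term arising intrinsically from the spherical divergence $\Delta_S x_0=-\frac{(n-1)x_0}{r^2}$, which prevents $d\omega$ from equalling the integrand exactly. This is more self-contained; the paper's version has the virtue of reusing (\ref{equation_schlafli_special_derivative_epsilon}) as a black box. One minor point: in your identity $(\nu_F)_0\,dA_F=r_F\frac{d\theta_F}{dr}\,(\pm dx_1\wedge\cdots\wedge dx_{n-2})$, the coordinates should be read as adapted to the vertical hyperplane containing $F$ (so that $x_0\,dA_F=\pm r_F\,dx_1\cdots dx_{n-2}$ matches the integrand of $\mu'_{h,\epsilon}(F)$); with that reading, the computation $(\nu_F)_0=\pm\frac{(r^2-r_F^2)^{1/2}}{r\,r_F}\,x_0$ together with the formula for $\frac{d\theta_F}{dr}$ (which is the content of Lemma~\ref{lemma_theta_derivative_r}, proved in the paper immediately after this lemma) closes the argument.
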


In (\ref{equation_schlafli_special_epsilon_2}),
for $\frac{d\theta_F}{dr}$ we now provide an explicit formula.
We first consider a special case that $\theta_F$ is the dihedral angle 
between $S^{n-1}_r$ and a fixed side face.

\begin{lemma}
\label{lemma_theta_derivative_r}
Let $\theta_F$ be the dihedral angle between $S^{n-1}_r$ and a side face $H$
of a fixed half-space in $\mathbb{DH}^n$, and $r_{\sss F}$ be the radius of $F$.
Then
\begin{equation}
\label{equation_theta_derivative_r}
\frac{d\theta_F}{dr}
=\pm\frac{(r^2-r_{\sss F}^2)^{1/2}}{r\cdot r_{\sss F}},
\end{equation}
taking the plus sign or the minus sign according as the origin $O$ 
lies outside or inside of the fixed half-space.
If $O$ is on $H$, then $r_{\sss F}=r$ and $\theta_F$ is a constant $\pi/2$.
\end{lemma}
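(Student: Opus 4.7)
The plan is to work in the upper half-space model, where the hyperbolic-space dihedral angle coincides with the Euclidean angle (the metric is conformal), and to reduce the computation to a two-dimensional cross-section. Let $d$ be the Euclidean distance from the origin $O$ to the fixed vertical plane $H$. Since $H$ is fixed, $d$ is independent of $r$, and the intersection $F = S^{n-1}_r \cap H$ is an $(n-2)$-sphere of Euclidean radius $r_F = (r^2 - d^2)^{1/2}$; equivalently, $d = (r^2 - r_F^2)^{1/2}$. The case $O \in H$ is degenerate: then $d = 0$, so $r_F = r$, and $H$ cuts $S^{n-1}_r$ along a great $(n-2)$-sphere, giving $\theta_F \equiv \pi/2$.

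Next I would slice by the 2-plane $\Pi$ through $O$ perpendicular to $F$ (containing the foot of the perpendicular from $O$ to $H$ and a reference point of $F$). In $\Pi$, the sphere $S^{n-1}_r$ appears as a circle of radius $r$ centered at $O$, and $H$ appears as a straight line at perpendicular distance $d$ from $O$. Because the dihedral angle $\theta_F$ is by definition measured in the 2-plane normal to $F$, it equals the planar angle between this circle and this line at their intersection point $p \in F \cap \Pi$. At $p$, the tangent to the circle is perpendicular to the radius $Op$, while $H$ is perpendicular to the foot segment from $O$ to $H$. Hence the angle between the tangent and the line equals the angle $\alpha$ at $O$ between $Op$ and the foot segment, which satisfies
\[
\cos\alpha = \frac{d}{r}, \qquad \sin\alpha = \frac{r_F}{r}.
\]

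Now I would differentiate. From $\cos\alpha = d/r$ with $d$ constant,
\[
-\sin\alpha \cdot \frac{d\alpha}{dr} = -\frac{d}{r^2},
\]
so
\[
\frac{d\alpha}{dr} = \frac{d}{r^2 \sin\alpha} = \frac{d}{r \cdot r_F} = \frac{(r^2 - r_F^2)^{1/2}}{r \cdot r_F}.
\]
It remains to fix the sign. The dihedral angle $\theta_F$ of the half-space is measured from inside the half-space; concretely, $\theta_F$ equals $\alpha$ when $O$ lies on the exterior side of $H$ (so the acute angle opens toward the interior as $r$ grows) and equals $\pi - \alpha$ when $O$ lies on the interior side. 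Thus $d\theta_F/dr = \pm d\alpha/dr$ with the signs matching those in the statement, proving \eqref{equation_theta_derivative_r}.

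I do not expect a serious obstacle: the content is elementary plane geometry once the right cross-section is taken. The only delicate point is the sign convention, which has to be tracked by recording on which side of $H$ the origin $O$ lies relative to the half-space that has $H$ as a face; the conformality of the upper half-space model guarantees that the computed Euclidean angle is the honest hyperbolic dihedral angle, so no further metric correction is needed.
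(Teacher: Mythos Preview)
Your proposal is correct and follows essentially the same approach as the paper: both use the relations $\sin\theta_F = r_F/r$ and $\cos\theta_F = (\text{distance from }O\text{ to }H)/r$ with the distance constant, and differentiate one of these identities. The only cosmetic difference is that the paper works directly with $\theta_F$ by introducing a \emph{signed} distance $c = \pm(r^2 - r_F^2)^{1/2}$ and differentiating $r\cos\theta_F = c$, whereas you work with an unsigned distance $d$ and an auxiliary angle $\alpha$, then recover the sign via $\theta_F = \alpha$ or $\pi - \alpha$; the computations are the same.
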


\begin{proof}
Denote $\pm (r^2-r_{\sss F}^2)^{1/2}$ by $d$, the signed Euclidean distance from $O$ to $H$,
taking the plus sign or the minus sign according as the origin $O$ lies outside or inside of the fixed half-space.
As $H$ is fixed, so $d$ is a constant and $\theta_F$ satisfies 
(see also Figure~\ref{figure_angle}, but be aware of the notational difference)
\[\sin\theta_F=\frac{r_{\sss F}}{r},\quad r\cos\theta_F=d.
\]
In the right hand equation, on both sides first divide by $r$ and then differentiate with respect to $r$, we have
\[-\sin\theta_F\frac{d\theta_F}{dr}=-\frac{d}{r^2}.
\]
So combined with the left hand equation,
\[\frac{d\theta_F}{dr}=\frac{d}{r^2\sin\theta_F}
=\frac{d}{r\cdot r_{\sss F}}
=\pm\frac{(r^2-r_{\sss F}^2)^{1/2}}{r\cdot r_{\sss F}}.
\]
This finishes the proof.
\end{proof}

We next consider the general case that $\theta_F$ is the dihedral angle 
between $S^{n-1}_r$ and a non-side face $E'$.
By Lemma~\ref{lemma_differential_theta_indepentent},
we immediately have the following.

\begin{corollary}
\label{corollary_theta_derivative_r}
Let $\theta_F$ be the dihedral angle between $S^{n-1}_r$ and a fixed $(n-1)$-face $E'$
(not necessarily a side face), and $r_{\sss F}$ be the radius of $F$. 
Then
$\frac{d\theta_F}{dr}=\pm\frac{(r^2-r_{\sss F}^2)^{1/2}}{r\cdot r_{\sss F}}$,
and by Lemma~\ref{lemma_schlafli_special_epsilon_2}
\begin{equation}
\label{equation_schlafli_special_epsilon_2_formula}
a_{\epsilon}(r)
=-\frac{1}{n-1}\sum_{F}\pm \frac{(r^2-r_{\sss F}^2)^{1/2}}{r\cdot r_{\sss F}} \mu'_{h,\epsilon}(F)
+\frac{\epsilon i}{r^2}\mu'_{h,\epsilon}(E),
\end{equation}
where $E=P\cap S^{n-1}_r$
and the ``$\pm$'' is explained in Lemma~\ref{lemma_theta_derivative_r}.
\end{corollary}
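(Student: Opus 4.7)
My plan is to split the proof into two parts. The second assertion, equation (\ref{equation_schlafli_special_epsilon_2_formula}), is obtained by direct substitution of the derivative formula into Lemma~\ref{lemma_schlafli_special_epsilon_2}, so the substantive content lies entirely in extending $\frac{d\theta_F}{dr}=\pm(r^2-r_F^2)^{1/2}/(r\cdot r_F)$ from the side-face case of Lemma~\ref{lemma_theta_derivative_r} to an arbitrary fixed $(n-1)$-face $E'$, which may be a portion of an $(n-1)$-sphere rather than a vertical hyperplane.

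The approach is to replace $E'$ by its tangent hyperplane $H$ along $F$. Since the dihedral angle between two smooth hypersurfaces at a point depends only on their tangent hyperplanes there, the angle $\theta_F$ between $S^{n-1}_r$ and $E'$ equals the angle between $S^{n-1}_r$ and $H$ at the same instant, and their first derivatives in $r$ coincide at that instant as well. Now $H$ is a fixed hyperplane, though not necessarily vertical. Inspecting the proof of Lemma~\ref{lemma_theta_derivative_r} shows that verticality was never actually used: the argument only needed that $H$ be a fixed hyperplane at signed Euclidean distance $c$ from the origin $O$, giving $\sin\theta_F = r_F/r$ and $r\cos\theta_F=c$. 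Those two relations hold verbatim in the present setting, and differentiating the second in $r$ (with $c$ fixed) and then using the first yields the claimed derivative formula, with the sign determined by whether $O$ lies outside or inside the half-space bounded by $H$ (equivalently, by $E'$).

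As an independent cross-check I would invoke the observation made just before the corollary, closely related to Remark~\ref{remark_indepentent_conjecture}: for fixed $r$, the quantity $a_{\epsilon}(r)$ defined in (\ref{equation_derivative_u_epsilon_r}) depends only on the faces $F$ on $S^{n-1}_r$, not on the radius of the containing $(n-1)$-face $E'$. Combined with the identity (\ref{equation_schlafli_special_epsilon_2}) from Lemma~\ref{lemma_schlafli_special_epsilon_2}, this forces each coefficient $\frac{d\theta_F}{dr}$ to be a universal function of $r$ and $r_F$ alone, whose value is therefore pinned down by evaluating on the special side-face case already handled in Lemma~\ref{lemma_theta_derivative_r}.

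The main obstacle is bookkeeping rather than substance: one has to verify that the tangent-hyperplane reduction faithfully preserves both the instantaneous value of $\theta_F$ and its $r$-derivative, and that the sign convention (plus when $O$ is outside the fixed half-space, minus when inside) transfers correctly from the side-face picture to the case in which $H$ is the tangent hyperplane of a curved $(n-1)$-sphere. Once the derivative formula is in hand, inserting it into (\ref{equation_schlafli_special_epsilon_2}) yields (\ref{equation_schlafli_special_epsilon_2_formula}) immediately, and the proof is complete.
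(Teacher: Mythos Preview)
Your cross-check argument is exactly the paper's proof and is correct: since $a_\epsilon(r)$ in (\ref{equation_derivative_u_epsilon_r}) depends, for a fixed $r$, only on the slice $E=P\cap S^{n-1}_r$ and not on how each $(n-2)$-face $F$ sits inside its ambient facet $E'$, the coefficient $\frac{d\theta_F}{dr}$ appearing in (\ref{equation_schlafli_special_epsilon_2}) must coincide with the value obtained by replacing $E'$ with a side face passing through the same $F$, namely the expression from Lemma~\ref{lemma_theta_derivative_r}.

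Your primary tangent-hyperplane argument, however, has a real gap. First, ``the tangent hyperplane to $E'$ along $F$'' is not well defined: $F$ is an $(n-2)$-sphere on the $(n-1)$-sphere $E'$, and the tangent hyperplane to $E'$ varies from point to point of $F$. More seriously, even fixing a point $p_0\in F$ and taking $H$ to be the tangent hyperplane to $E'$ there, the claim that the first derivatives in $r$ coincide is false. Tangency of $E'$ and $H$ at $p_0$ guarantees that the dihedral angles with $S^{n-1}_{r_0}$ agree at $r=r_0$, but $\frac{d\theta_F}{dr}$ is a first-order quantity that feels the curvature of $E'$: as $r$ varies, the intersection $F_r$ slides along $E'$ and the normal to $E'$ rotates at rate $1/\rho$, contributing a first-order term absent for the flat $H$. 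A planar check makes this explicit: with $O$ at the origin, $E'$ the circle of radius $1$ centered at $(2,0)$, and $r_0=2$, one computes $c_O=7/4$, $r_F=\sqrt{15}/4$, and $\frac{d\theta_F}{dr}\big|_{r_0}=c_O/(r_0 r_F)=7/(2\sqrt{15})$, whereas the tangent line $H$ to $E'$ at the intersection point sits at distance $\tilde c=1/2$ from $O$ with $\tilde r_F=\sqrt{15}/2$, giving $\frac{d\tilde\theta}{dr}\big|_{r_0}=\tilde c/(r_0\tilde r_F)=1/(2\sqrt{15})$. So the tangent-plane replacement must be discarded; the proof stands on your second argument alone.
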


\begin{remark}
\label{remark_theta_derivative_r}
In Corollary~\ref{corollary_theta_derivative_r}, while $\theta_F$ may not be monotonic over $r$,
$\frac{d\theta_F}{dr}$ will change sign at most once, which happens when $r=r_{\sss F}$.
\end{remark}

For $r>0$, let $a(r)$ be the pointwise limit of $a_{\epsilon}(r)$ as $\epsilon\to 0^+$,
\begin{equation}
\label{equation_derivative_special_limit_2}
a(r)=\lim_{\epsilon\to 0^+} a_{\epsilon}(r),
\end{equation}
then we have the following.

\begin{lemma}
Assuming Theorem~\ref{theorem_volume_invariant} is true up to dimension $n-1\ge 1$, 
then
\begin{equation}
\label{equation_schlafli_special_2}
a(r) = -\frac{1}{n-1}\sum_{F} V_{n-2}(F)\,\frac{d\theta_F}{dr},
\end{equation}
and $a(r)$ is continuous for $r>0$
except at a finite number of points where $P'_r$ changes its combinatorial type. 
\end{lemma}

\begin{proof}
As it is assumed that Theorem~\ref{theorem_volume_invariant} is true up to dimension $n-1\ge 1$, 
in (\ref{equation_schlafli_special_epsilon_2})
$\mu'_{h,\epsilon}(E)$ is uniformly bounded for all $\epsilon>0$,
so $\lim_{\epsilon\to 0^+} \epsilon\mu'_{h,\epsilon}(E)=0$.
Then in (\ref{equation_schlafli_special_epsilon_2})
by taking the limits on both sides as $\epsilon\to 0^+$, we prove (\ref{equation_schlafli_special_2}).
\end{proof}

We next prove a convergence property of $a_{\epsilon}(r)$,
which will be useful for both the upper half-space model and the hemisphere model.

\begin{lemma}
\label{lemma_a_epsilon_limit_convergent}
Assuming Theorem~\ref{theorem_volume_invariant} is true up to dimension $n-1\ge 1$.
For a fixed $m$,
let $P$ be a polytope in $\mathbb{DH}^n$ and be the intersection of at most $m$ half-spaces.
Then for any $r_0$ and $r_1$ with $0<r_0<r_1$,
the sequence $\{a_{\epsilon}(r)\}$ is dominated by an integrable function $g(r)$ on $[r_0,r_1]$
with $|a_{\epsilon}(r)|\le g(r)$ for all $\epsilon>0$.
Thus Lebesgue dominated convergence theorem applies 
and $a(r)$ is integrable on $[r_0,r_1]$, and
\begin{equation}
\label{equation_a_epsilon_limit_convergent}
\lim_{\epsilon\to 0^+}\int_{r_0}^{r_1} a_{\epsilon}(r)dr=\int_{r_0}^{r_1} a(r)dr.
\end{equation}
Also for fixed m, $r_0$ and $r_1$,
$\int_{r_0}^{r_1} a_{\epsilon}(r)dr$ is uniformly bounded for all $P$ and $\epsilon>0$.
\end{lemma}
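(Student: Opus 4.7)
The plan is to mirror closely the argument used in the proof of Lemma~\ref{lemma_SDF_special}(1), where the dominating function was constructed from the monotonicity of the dihedral angles. Here the substrate formula is (\ref{equation_schlafli_special_epsilon_2}) of Lemma~\ref{lemma_schlafli_special_epsilon_2}, which expresses $a_\epsilon(r)$ as a sum over $(n-2)$-faces $F$ on $S^{n-1}_r$ plus a boundary correction $\frac{\epsilon i}{r^2}\mu'_{h,\epsilon}(E)$ coming from $E=P\cap S^{n-1}_r$. I would bound these two contributions separately, then assemble a single integrable majorant $g(r)$ on $[r_0,r_1]$.

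For the sum over $(n-2)$-faces, by the inductive hypothesis (Theorem~\ref{theorem_volume_invariant} through dimension $n-1$) applied to each $F$, there is a constant $c>0$ depending only on $m$ with $|\mu'_{h,\epsilon}(F)|\le c$ uniformly in $\epsilon>0$. By Corollary~\ref{corollary_theta_derivative_r}, $\frac{d\theta_F}{dr}=\pm(r^2-r_F^2)^{1/2}/(r\cdot r_F)$; while this blows up as $r_F\to 0$, Remark~\ref{remark_theta_derivative_r} is the key: $\frac{d\theta_F}{dr}$ changes sign at most once on $[r_0,r_1]$, and since $\theta_F\in[0,\pi]$ its total variation is at most $2\pi$. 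Hence
\[
\int_{r_0}^{r_1}\Bigl|\tfrac{d\theta_F}{dr}\Bigr|\,dr\le 2\pi,
\]
so $g_1(r):=\frac{c}{n-1}\sum_F\bigl|\frac{d\theta_F}{dr}\bigr|$ is integrable on $[r_0,r_1]$ with integral bounded by a quantity depending only on $m$ (there are at most $O(m^2)$ such faces~$F$).

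For the correction term, apply Lemma~\ref{lemma_f_u_epsilon_bounded}(1) to the $(n-1)$-face $E$ on the sphere $S^{n-1}_r$ of radius $r$: this gives a uniform bound $\bigl|\frac{\epsilon}{r}\mu'_{h,\epsilon}(E)\bigr|\le C$ for all $\epsilon>0$ and all such $E$, with $C$ depending only on $m$. Since $r\ge r_0$ on $[r_0,r_1]$,
\[
\Bigl|\tfrac{\epsilon i}{r^2}\mu'_{h,\epsilon}(E)\Bigr|\le \tfrac{1}{r_0}\Bigl|\tfrac{\epsilon}{r}\mu'_{h,\epsilon}(E)\Bigr|\le \tfrac{C}{r_0},
\]
so I can take the constant $g_2:=C/r_0$. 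Setting $g(r)=g_1(r)+g_2$, combining the two estimates yields $|a_\epsilon(r)|\le g(r)$ for every $\epsilon>0$ and $r\in[r_0,r_1]$, with $\int_{r_0}^{r_1}g(r)\,dr$ finite and bounded by a quantity depending only on $m$, $r_0$, $r_1$.

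With the dominating function in hand, the rest is routine: Lebesgue's dominated convergence theorem applied to $a_\epsilon\to a$ (pointwise convergence is guaranteed by (\ref{equation_derivative_special_limit_2}) and (\ref{equation_schlafli_special_2})) yields (\ref{equation_a_epsilon_limit_convergent}), and the uniform bound on $\int_{r_0}^{r_1}a_\epsilon(r)\,dr$ follows from $|\!\int a_\epsilon|\le\int g$. The main obstacle is really the first step: one must resist the temptation to bound $|\frac{d\theta_F}{dr}|$ pointwise (which blows up near $r_F=r$) and instead exploit the at-most-one sign change of Remark~\ref{remark_theta_derivative_r} to control it in $L^1$ rather than $L^\infty$; the second term is then a straightforward application of the already-established Lemma~\ref{lemma_f_u_epsilon_bounded}(1).
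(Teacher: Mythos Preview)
Your proof is correct and follows essentially the same approach as the paper: bound the $(n-2)$-face sum via the inductive hypothesis and the at-most-one sign change of $d\theta_F/dr$, bound the correction term via Lemma~\ref{lemma_f_u_epsilon_bounded}(1), and apply dominated convergence. Two minor remarks: the paper uses the slightly sharper majorant $c_2/r$ rather than the constant $C/r_0$ for the second term, and there are at most $m$ (not $O(m^2)$) such faces $F$, since each is the intersection of $S^{n-1}_r$ with one of the $(n-1)$-faces of $P$; also, $|d\theta_F/dr|$ blows up as $r_F\to 0$, not as $r_F\to r$ (where it vanishes).
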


\begin{proof}
As it is assumed that Theorem~\ref{theorem_volume_invariant} is true up to dimension $n-1\ge 1$,
there is a constant $c_1$, depending only on $m$ but not on $P$ and $\epsilon$,
such that for all $(n-2)$-faces $F$ of $P'_r$, we have $|\mu'_{h,\epsilon}(F)|\le c_1$.
Let $E=P\cap S^{n-1}_r$, so $r$ is the radius of $E$.
Then by Lemma~\ref{lemma_f_u_epsilon_limit}, 
there is also a constant $c_2$ that depends only on $m$,
such that $|\frac{\epsilon}{r}\mu'_{h,\epsilon}(E)|\le c_2$.
By Lemma~\ref{lemma_schlafli_special_epsilon_2} we have
\begin{equation}
\label{equation_schlafli_special_epsilon_2_bound}
|a_{\epsilon}(r)|
\le \frac{c_1}{n-1}\sum_F \left|\frac{d\theta_F}{dr}\right| + \frac{c_2}{r}.
\end{equation}
Set $g(r)=\frac{c_1}{n-1}\sum_F |\frac{d\theta_F}{dr}| + \frac{c_2}{r}$,
then $|a_{\epsilon}(r)|\le g(r)$.
By Corollary~\ref{corollary_theta_derivative_r} and Remark~\ref{remark_theta_derivative_r}, 
$\frac{d\theta_F}{dr}$ will change sign at most once.
So on any interval $[r_0,r_1]$ with $r_0>0$,
$|\frac{d\theta_F}{dr}|$ is integrable
and $\int_{r_0}^{r_1} |\frac{d\theta_F}{dr}|dr \le\pi+\pi=2\pi$.
As there are at most $m$ half-spaces, so there are at most
$m$ such $(n-2)$-faces $F$ of $P'_r$ on $S^{n-1}_r$,
thus $g(r)$ is also integrable on $[r_0,r_1]$.
Therefore on $[r_0,r_1]$ Lebesgue dominated convergence theorem 
applies to $\{a_{\epsilon}(r)\}$ and $a(r)$ is integrable,
and thus proves (\ref{equation_a_epsilon_limit_convergent}).

For fixed $m$, $r_0$ and $r_1$, we have $\frac{1}{r}\le \frac{1}{r_0}$ for $r_0\le r\le r_1$,
so by (\ref{equation_schlafli_special_epsilon_2_bound})
\[\left| \int_{r_0}^{r_1} a_{\epsilon}(r)dr \right|
\le \frac{2c_1 m\pi}{n-1} + \frac{c_2(r_1-r_0)}{r_0},
\]
and thus $\int_{r_0}^{r_1} a_{\epsilon}(r)dr$ is uniformly bounded.
This finishes the proof.
\end{proof}

Then we have a second special case of \SDF{} of $\mu(P)$.

\begin{lemma}
\label{lemma_SDF_special_2}
Assuming Theorem~\ref{theorem_volume_invariant} is true up to dimension $n-1\ge 1$.
Let $P$ be a polytope in $\mathbb{DH}^n$ and $P'_r=P\cap B_r$,
then 
$\mu(P'_r)$ is continuous for $r>0$ and
\begin{equation}
\label{equation_SDF_special_2}
\kappa\cdot\frac{d\mu(P'_r)}{dr}
=\frac{1}{n-1}\sum_{F} V_{n-2}(F)\,\frac{d\theta_F}{dr},
\end{equation}
where the sum is taken over all $(n-2)$-faces $F$ of $P'_r$ on $S^{n-1}_r$.
\end{lemma}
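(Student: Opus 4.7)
The plan is to proceed in close parallel to the proof of Lemma~\ref{lemma_SDF_special}, but using the radial decomposition instead of slicing by $x_{n-1}=t$. Fix $r_0>0$. First I would integrate (\ref{equation_volume_epsilon_integration_derivative_2}) to get $\mu_{u,\epsilon}(P'_r)-\mu_{u,\epsilon}(P'_{r_0})=\int_{r_0}^{r}a_\epsilon(s)\,ds$, an identity that holds before taking any limit in $\epsilon$.

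Next I would let $\epsilon\to 0^+$ on both sides. Since $P'_r$ is compact (the intersection of $P$ with the ball $B_r$), it does not contain the point at infinity, so Lemma~\ref{lemma_SDF_special} ensures $\mu_u(P'_r)=\lim_{\epsilon\to 0^+}\mu_{u,\epsilon}(P'_r)$ exists for every $r>0$, and the same for $P'_{r_0}$; hence the left-hand side converges to $\mu_u(P'_r)-\mu_u(P'_{r_0})$. For the right-hand side I would invoke Lemma~\ref{lemma_a_epsilon_limit_convergent}: the sequence $\{a_\epsilon\}$ is dominated on $[r_0,r_1]$ by the integrable function $g(r)$ constructed there, so Lebesgue dominated convergence yields $\lim_{\epsilon\to 0^+}\int_{r_0}^{r}a_\epsilon(s)\,ds=\int_{r_0}^{r}a(s)\,ds$. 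Combining these gives
\[
\mu_u(P'_r)-\mu_u(P'_{r_0})=\int_{r_0}^{r}a(s)\,ds,
\]
which immediately establishes continuity of $\mu_u(P'_r)$ in $r$, since the right-hand side is continuous in $r$ (the integrand is locally integrable by the bound on $g$).

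From the integral representation, the fundamental theorem of calculus gives $\frac{d\mu_u(P'_r)}{dr}=a(r)$ at every $r>0$ where $a$ is continuous; by (\ref{equation_schlafli_special_2}) this is everywhere except at the finite set of $r$ where $P'_r$ changes combinatorial type. Substituting (\ref{equation_schlafli_special_2}) and multiplying through by $\kappa=-1$ yields
\[
\kappa\cdot\frac{d\mu_u(P'_r)}{dr}=-a(r)=\frac{1}{n-1}\sum_{F}V_{n-2}(F)\,\frac{d\theta_F}{dr},
\]
which is (\ref{equation_SDF_special_2}).

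The only subtle point is verifying that one really may pass from $\mu_{u,\epsilon}$ to $\mu_u$ on both sides simultaneously: the left-hand convergence is already settled by Lemma~\ref{lemma_SDF_special}, and the right-hand convergence depends on the uniform domination built into Lemma~\ref{lemma_a_epsilon_limit_convergent}, which in turn rests on the inductive hypothesis that Theorem~\ref{theorem_volume_invariant} holds up through dimension $n-1$ (controlling $\mu'_{h,\epsilon}(F)$ uniformly) together with Lemma~\ref{lemma_f_u_epsilon_bounded}(1) (controlling the $\frac{\epsilon i}{r^2}\mu'_{h,\epsilon}(E)$ term in (\ref{equation_schlafli_special_epsilon_2})). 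Since all of these are in hand, the main obstacle is really bookkeeping rather than any genuine analytic difficulty, and the result follows.
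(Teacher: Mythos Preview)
Your proposal is correct and follows essentially the same argument as the paper: integrate (\ref{equation_volume_epsilon_integration_derivative_2}), pass to the limit using Lemma~\ref{lemma_SDF_special} on the left and Lemma~\ref{lemma_a_epsilon_limit_convergent} on the right to obtain $\mu_u(P'_r)-\mu_u(P'_{r_0})=\int_{r_0}^r a(s)\,ds$, then differentiate and apply (\ref{equation_schlafli_special_2}). Your additional remarks on the dependencies (Lemma~\ref{lemma_SDF_special} for existence of $\mu_u(P'_r)$, and the inductive hypothesis plus Lemma~\ref{lemma_f_u_epsilon_bounded}(1) feeding into the domination in Lemma~\ref{lemma_a_epsilon_limit_convergent}) are accurate and make the bookkeeping explicit.
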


\begin{proof}
For any $r_0>0$, by (\ref{equation_volume_epsilon_integration_derivative_2})
\[\mu(P'_r)-\mu(P'_{r_0})
=\lim_{\epsilon\to 0^+} (\mu_{\epsilon}(P'_r)- \mu_{\epsilon}(P'_{r_0}))
=\lim_{\epsilon\to 0^+}\int_{r_0}^r a_{\epsilon}(r)dr.
\]
By Lemma~\ref{lemma_a_epsilon_limit_convergent} we have
\[\lim_{\epsilon\to 0^+}\int_{r_0}^r a_{\epsilon}(r)dr=\int_{r_0}^r a(r)dr,
\]
so $\mu(P'_r)-\mu(P'_{r_0})=\int_{r_0}^r a(r)dr$, and thus $\mu(P'_r)$ is continuous for $r>0$.
Then $\frac{d\mu(P'_r)}{dr}=a(r)$
except at a finite number of points where $P'_r$ changes its combinatorial type
(and $a(r)$ may not be continuous or even bounded).
In (\ref{equation_schlafli_special_2}), multiplying by $\kappa=-1$,
we prove (\ref{equation_SDF_special_2}) and finish the proof.
\end{proof}

\section{Invariance properties}
\label{section_invariance_properties}

In the upper half-space model, 
if a polytope $P$ is isometric to $Q$ and both are in a finite region in $\mathbb{R}^n$,
then the isometry can be expressed as a finite composition of 
translations, similarities, orthogonal transformations and inversions.
In fact it need to involve up to only \emph{one} inversion,
so the polytopes in the middle steps of those basic types of isometries 
are all in a finite region in $\mathbb{R}^n$ too.
By the definition in (\ref{equation_volume_P_half_space_epsilon}),
$\mu(P)$ is clearly invariant under those basic types of isometries
except that it is not obvious for inversion. 
So to show that $\mu(P)$ is invariant under isometry we only need to show that
it is invariant under inversion.

As before,
let $B_r$ be a Euclidean ball centered at $O$ with radius $r>0$ where $r^2=x_0^2+\cdots+x_{n-1}^2$,
and $S^{n-1}_r$ be the $(n-1)$-sphere with radius $r$.

\begin{proposition}
\label{proposition_mu_u_invariant}
Assuming Theorem~\ref{theorem_volume_invariant} is true up to dimension $n-1\ge 1$.
Let $P$ be a polytope in $\mathbb{DH}^n$,
then $\mu(P)$ is invariant under inversion,
and thus $\mu(P)$ is invariant under isometry.
\end{proposition}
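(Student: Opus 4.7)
The plan is to exploit the radial integration from Lemma~\ref{lemma_SDF_special_2} by expressing $\mu_{u,\epsilon}$ as an iterated integral over concentric spheres around the inversion center, and then transferring the integral for $g(P)$ back to one for $P$ via the change of variables induced by $g$. By Remark~\ref{remark_heuristic_invariance} only the inversion case requires work; since horizontal translations trivially preserve $\mu_u$ (the integrand $1/(x_0-\epsilon i)^n$ depends only on $x_0$), I assume $g$ is centered at the origin of $\mathbb{R}^n$ with radius $R>0$, and write $Q=g(P)$. By Definition~\ref{definition_volume} I may further assume $P$ contains neither the origin nor the point at infinity, so $P\subset B_{r_2}\setminus B_{r_1}$ for some $0<r_1<r_2$ and $Q\subset B_{R^2/r_1}\setminus B_{R^2/r_2}$.

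The key geometric observation is that restricted to any Euclidean sphere $S^{n-1}_r$ centered at the origin, $g$ acts as the positive homothety $x\mapsto(R^2/r^2)\,x$ onto $S^{n-1}_{r'}$ with $r'=R^2/r$, preserving both orientation and the Euclidean angular form $d\theta_{n-1}$. Direct substitution in the defining integral (\ref{equation_derivative_u_epsilon_r}) then gives
\[
a^{Q}_\epsilon(r') \;=\; \tfrac{r}{r'}\, a^{P}_{\tilde\epsilon(r)}(r), \qquad \tilde\epsilon(r):=\tfrac{r^2}{R^2}\,\epsilon.
\]
Changing variables in the polar decomposition $\mu_{u,\epsilon}(Q)=\int_0^\infty a^{Q}_\epsilon(r')\,dr'$ via $r'=R^2/r$, so $dr'=-(r'/r)\,dr$, the factor $r/r'$ cancels against the Jacobian and the orientation reversal flips the limits, producing
\[
\mu_{u,\epsilon}(Q) \;=\; \int_{r_1}^{r_2} a^{P}_{\tilde\epsilon(r)}(r)\,dr.
\]

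It remains to pass to the limit $\epsilon\to 0^+$. By Lemma~\ref{lemma_a_epsilon_limit_convergent} applied to $P$ on $[r_1,r_2]$, there is an integrable function $h(r)$ such that $|a^{P}_\eta(r)|\le h(r)$ for all $\eta>0$; taking $\eta=\tilde\epsilon(r)>0$ gives the uniform bound $|a^{P}_{\tilde\epsilon(r)}(r)|\le h(r)$. Since also $a^{P}_{\tilde\epsilon(r)}(r)\to a^{P}(r)$ pointwise as $\epsilon\to 0^+$ (because $\tilde\epsilon(r)\to 0^+$ for each fixed $r\in[r_1,r_2]$), Lebesgue dominated convergence yields
\[
\mu_u(Q)\;=\;\lim_{\epsilon\to 0^+}\mu_{u,\epsilon}(Q)\;=\;\int_{r_1}^{r_2} a^{P}(r)\,dr\;=\;\mu_u(P),
\]
the final equality being the radial decomposition of Lemma~\ref{lemma_SDF_special_2} applied to $P$ itself. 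Combined with the trivial invariance under the other generating isometries, this proves $\mu_u$ is isometry-invariant.

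The main obstacle is the second-paragraph bookkeeping: one must verify that $g|_{S^{n-1}_r}$ is a pure positive homothety (so no orientation or $\pm$ sign issues arise, unlike a global inversion on $\mathbb{R}^n$), and then check that the angular pullback factor $r/r'$ in $a^{Q}_\epsilon(r')$ cancels exactly with the radial Jacobian $r'/r$ from $dr'=-(r'/r)\,dr$, so that the whole inversion is compressed into the single $r$-dependent imaginary shift $\tilde\epsilon(r)$. Once this identity is established, the limit follows immediately from the uniform dominated bound already built into Lemma~\ref{lemma_a_epsilon_limit_convergent}.
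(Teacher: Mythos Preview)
Your argument is correct and constitutes a genuinely different route from the paper's proof. The paper works entirely at the $\epsilon\to 0^+$ level: it applies the special \SDF{} of Lemma~\ref{lemma_SDF_special_2} separately to $P'_r=P\cap B_r$ and $Q'_r=Q\cap B_r$, uses the isometry of the corresponding $(n-2)$-faces (invoking the inductive hypothesis in the form $V_{n-2}(F)=V_{n-2}(F')$) together with the dihedral-angle relation $\theta_F=\pi-\varphi_{F'}$, and concludes $\frac{d}{dr}\bigl(\mu_u(P'_r)+\mu_u(Q'_{1/r})\bigr)=0$. Your proof instead stays at the $\epsilon$-level: the explicit change of variables under the inversion (whose restriction to each $S^{n-1}_r$ is the positive homothety $x\mapsto (R^2/r^2)x$) yields the clean identity $\mu_{u,\epsilon}(Q)=\int_{r_1}^{r_2} a^P_{\tilde\epsilon(r)}(r)\,dr$, and the inductive hypothesis is used only indirectly, through the uniform $\epsilon$-independent bound in Lemma~\ref{lemma_a_epsilon_limit_convergent}.

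The paper's approach is more geometric and foreshadows the general \SDF{} for $\mathbb{DH}^n$, since the cancellation is driven by the face-pairing $F\leftrightarrow F'$ and the angle relation; yours is more elementary and self-contained, needing neither the dihedral-angle identity nor the invariance of $V_{n-2}$ on faces, and it makes transparent exactly how the inversion deforms the $\epsilon$-perturbation into the $r$-dependent shift $\tilde\epsilon(r)$. Both arguments ultimately rest on the same dominated-convergence machinery of Lemma~\ref{lemma_a_epsilon_limit_convergent}.
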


\begin{proof}
Without loss of generality, in the upper half-space model
let $g$ be an inversion that maps $P$ to $Q$ with $y_i=x_i/(x_0^2+\cdots+x_{n-1}^2)$.
As it is assumed that both $P$ and $Q$ are in a finite region in $\mathbb{R}^n$,
thus they also do not contain a neighborhood of the origin $O$.
Denote $P\cap B_r$ by $P'_r$. 
By Lemma~\ref{lemma_SDF_special_2} 
\begin{equation}
\label{equation_differential_volume_P}
\kappa\cdot\frac{d\mu(P'_r)}{dr}
=\frac{1}{n-1}\sum_{F} V_{n-2}(F)\,\frac{d\theta_F}{dr}.
\end{equation}
Similarly, we denote $Q\cap B_r$ by $Q'_r$, and for all $(n-2)$-faces $F'$ 
of $Q'_r$ on $S^{n-1}_r$, let $\varphi_{F'}$ be the dihedral angle at $F'$. Then
\begin{equation}
\label{equation_differential_volume_Q}
\kappa\cdot\frac{d\mu(Q'_r)}{dr}
=\frac{1}{n-1}\sum_{F'} V_{n-2}(F')\,\frac{d\varphi_{F'}}{dr}.
\end{equation}

When the sphere $S^{n-1}_r$ sweeps through $P$,
we have $S^{n-1}_{1/r}$ sweeping through $Q$ from the opposite direction,
so $P'_r$ is isometric to $Q\setminus Q'_{1/r}$.
For any $(n-2)$-face $F$ of $P'_r$ on $S^{n-1}_r$,
it is isometric to the corresponding $(n-2)$-face $F'$ of $Q'_{1/r}$ on $S^{n-1}_{1/r}$,
and $\theta_F=\pi-\varphi_{F'}$.
As it is assumed that Theorem~\ref{theorem_volume_invariant} is true for dimension $n-2$,
so $V_{n-2}(F)=V_{n-2}(F')$.
In (\ref{equation_differential_volume_Q}) replace $d\mu(Q'_r)$ with $d\mu(Q'_{1/r})$,
and add it to (\ref{equation_differential_volume_P}) and drop off $\kappa$, then
\[\frac{d\mu(P'_r)}{dr}+\frac{d\mu(Q'_{1/r})}{dr}=0.
\]
By Lemma~\ref{lemma_SDF_special_2} both $\mu(P'_r)$ and $\mu(Q'_{1/r})$
are continuous for $r>0$, so $\mu(P'_r)+\mu(Q'_{1/r})$ is a constant,
and for very small value of $r$ it is $\mu(Q)$,
while for very large value of $r$ it is $\mu(P)$.
So $\mu(P)=\mu(Q)$. Then $\mu(P)$ is invariant under inversion,
and thus $\mu(P)$ is invariant under isometry.
\end{proof}

Next we show that in the hemisphere model $\mu_h(P)$ is invariant under isometry.
Proposition~\ref{proposition_mu_h_invariant} is an important middle step 
as it is used to prove Proposition~\ref{proposition_mu_h_prime_invariant},
which will be applied many times in the induction step to prove Theorem~\ref{theorem_volume_invariant}.

\begin{proposition}
\label{proposition_mu_h_invariant}
Assuming Theorem~\ref{theorem_volume_invariant} is true up to dimension $n-1\ge 1$.
For a fixed $m$, let $P$ be a polytope in $\mathbb{DH}^n$ 
and be the intersection of at most $m$ half-spaces.
Then in the hemisphere model on $n$-sphere $S^n_r$,
(1) $\mu_h(P)$ is invariant under isometry,
and agrees with the value $\mu(g(P))$ for any isometry $g$ mapping 
to the upper half-space model, and 
(2) $\mu_{h,\epsilon}(P)$ is uniformly bounded.
\end{proposition}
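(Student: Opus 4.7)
The plan is to reduce Proposition~\ref{proposition_mu_h_invariant} to the already-established properties of $\mu_u$ by proving the identity $\mu_h(P) = \mu_u(g(P))$, where $g$ is the stereographic projection from $\mathbb{S}^n$ minus the south pole $(0,\ldots,0,-1)$ onto the upper half-space $\mathbb{R}^n$, which identifies the hemisphere and upper half-space models of $\mathbb{DH}^n$. By Remark~\ref{remark_hemisphere_model_r} we may work on $\mathbb{S}^n$; and if $P$ contains the south pole, we first apply a rotation of $\mathbb{S}^n$ (an isometry of $\mathbb{DH}^n$) to move $P$ off of it, so that $g(P)$ becomes a compact polytope in $\mathbb{R}^n$. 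Once $\mu_h(P) = \mu_u(g(P))$ is proved, the invariance of $\mu_h$ under all isometries of $\mathbb{DH}^n$ follows immediately from Proposition~\ref{proposition_mu_u_invariant}, because $g$ intertwines the isometry groups of the two models.

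First I would rewrite $\mu_{h,\epsilon}(P)$ via change of variables through $g$: since the unperturbed hyperbolic volume elements of the two models agree under $g$, a direct computation gives
\[
\mu_{h,\epsilon}(P) \;=\; \int_{g(P)\subset\mathbb{R}^n} \frac{dy_0\cdots dy_{n-1}}{(y_0 - c(y)\,\epsilon i)^n},
\]
where $c(y) = y_0/x_0 = 2/(x_n+1)$ is continuous and satisfies $1 \le c(y) \le c_0$ on $g(P)$ for some constant $c_0$, with $c(y) \to 1$ as $y_0 \to 0$. This is the higher-dimensional analogue of the identity exploited in the $n=1$ case of Lemma~\ref{lemma_volume_invariant_0_1}. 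The problem then splits in two: (a) show $\mu_{h,\epsilon}(P) - \mu_{u,\epsilon}(g(P)) \to 0$ as $\epsilon \to 0^+$; (b) show that $\mu_{h,\epsilon}(P)$ is uniformly bounded in $\epsilon$, with the bound depending only on the number $m$ of half-spaces defining $P$.

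For both parts I would decompose $g(P)$ by fibers in the $y_0$-direction over its cross-section with $\{y_0 = 0\}$, which is a compact lower-dimensional polytope, and apply the three-region $b(x,t)$-estimate of (\ref{equation_integration_b_eqsilon_1})--(\ref{equation_integration_b_eqsilon_3}) fiberwise. Because $1 \le c(y) \le c_0$ the fiberwise estimates are uniform in $\epsilon$ and in the fiber, and integrating over the bounded cross-section yields (b). For (a), the difference integrand $(y_0 - c(y)\epsilon i)^{-n} - (y_0 - \epsilon i)^{-n}$ admits a similar fiberwise $b$-type bound, and since $c(y) \to 1$ as $y_0 \to 0$ the integrand tends to $0$ pointwise away from $\{y_0 = 0\}$; Lebesgue dominated convergence then yields $\mu_h(P) = \mu_u(g(P))$. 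Combined with Lemma~\ref{lemma_SDF_special}(2) applied to $g(P)$, this also gives (b) directly in the limit.

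The main obstacle is making the fiberwise reduction genuinely uniform in higher dimensions: one must ensure that the fibers $[y_0^-(y'), y_0^+(y')]$ over $y' = (y_1,\ldots,y_{n-1})$ vary in a sufficiently controlled way and that the $b$-type bounds depend only on $c_0$ and $m$, not on the particular fiber. This is essentially a combinatorial matter, relying on the structure of half-spaces in the upper half-space model described in Section~\ref{section_upper_half_space} together with the already-established uniform boundedness of $\mu_{u,\epsilon}$ from Lemma~\ref{lemma_SDF_special}(2), and once settled it handles both the dominated convergence in (a) and the uniform bound (b) in a single argument.
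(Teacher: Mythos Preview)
Your setup is right: the change-of-variables identity
\[
\mu_{h,\epsilon}(P)=\int_{g(P)}\frac{dy_0\cdots dy_{n-1}}{(y_0-c(y)\epsilon i)^n},\qquad c(y)=\frac{4+r^2}{4},
\]
is exactly what the paper uses, and the target $\mu_h(P)=\mu_u(g(P))$ is the correct reduction. The gap is in the mechanism you propose for controlling this integral. The three-region $b(x,t)$ estimates (\ref{equation_integration_b_eqsilon_1})--(\ref{equation_integration_b_eqsilon_3}) are tailored to the \emph{one-dimensional} integrand $(y_0-c\epsilon i)^{-1}$, whose odd part has $L^1$ norm $\pi$ uniformly in $\epsilon$. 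For $n\ge 2$ the fiber integrand $(y_0-c\epsilon i)^{-n}$ has no such uniform $L^1$ bound: integrating it over a symmetric fiber $[-A,A]$ gives a quantity of order $A^{-(n-1)}$ in the limit, and since the facets of $g(P)$ are spheres meeting $\{y_0=0\}$ orthogonally one has $A(y')\sim\sqrt{s}$ near the boundary of the cross-section, so $A(y')^{-(n-1)}\sim s^{-(n-1)/2}$ is not integrable over the $(n-1)$-dimensional cross-section once $n\ge 3$. The finiteness of $\mu_u(g(P))$ comes from cancellation \emph{between} fibers, not from fiberwise smallness, so a Fubini/dominated-convergence argument along $y_0$ cannot close.

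The paper exploits instead the one structural feature of $c(y)$ that your slicing ignores: $c$ depends only on the radius $r=|y|$. Slicing $g(P)$ by spheres $S^{n-1}_r$ makes $c$ constant on each slice, so the radial density is exactly $a_{c(r)\epsilon}(r)$ with $a_\epsilon$ the function already analyzed in the second special case of the \SDF{} (Lemma~\ref{lemma_schlafli_special_epsilon_2} and Corollary~\ref{corollary_theta_derivative_r}). Lemma~\ref{lemma_a_epsilon_limit_convergent} then supplies an $\epsilon$-independent dominating function $g(r)$ on any compact $[r_0,r_1]\subset(0,\infty)$, built from the induction hypothesis on the $(n-2)$-faces and the monotonicity of the dihedral angles $\theta_F(r)$. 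This simultaneously gives the uniform bound on $\mu_{h,\epsilon}(P)$ and the limit $\mu_h(P)=\mu_u(g(P))$. To avoid $r_0=0$ and $r_1=\infty$ (the images of $x_n=\pm1$), the paper cuts $P$ by two vertical planes and handles the end pieces with a second stereographic projection. Your proposal would be repaired by switching from $y_0$-fibers to radial slicing and invoking the second-special-case machinery rather than the one-dimensional $b$-estimates.
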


\begin{proof}
In the hemisphere model, without loss of generality we use  the unit $n$-sphere $\mathbb{S}^n$
(see Remark~\ref{remark_hemisphere_model_r}).
Let $(t_0,t_1)=(-\frac{\sqrt{2}}{2},\frac{\sqrt{2}}{2})$,
and we use two planes $x_n=t_0$ and $x_n=t_1$
to cut $P$ into up to three pieces.
The ``middle'' piece satisfies $t_0\le x_n \le t_1$,
and both the ``side'' pieces
satisfy $t_0\le x_{n-1}\le t_1$ (not $x_n$).
(We intentionally choose the value to make them match,
but the value itself does not have any special meaning.)
By symmetry we only need to prove the statement for the middle piece,
so for convenience we assume $P$ satisfies $t_0\le x_n \le t_1$.

By (\ref{equation_volume_P_hemisphere_epsilon})
\[\mu_{h,\epsilon}(P)
=\int_{P\subset \mathbb{S}^n}\pm\frac{dx_1\cdots dx_n}{x_0(x_0-\epsilon i)^n}  
=\int_{P\subset \mathbb{S}^n}\pm\frac{x_0^n}{(x_0-\epsilon i)^n}\frac{dx_1\cdots dx_n}{x_0^{n+1}}.
\]
Let $g$ be a stereographic projection of the unit $n$-sphere
from $(0,\dots,0,-1)$ to the space $x_n=1$ (we use coordinates $y$'s for this space):
$g(x_0,\dots,x_n)=(y_0,\dots,y_{n-1})$.
See (\ref{equation_upper_half_space_coordinate}) for the formula of $g$.
As $g$ is an isometry, so $(g^{-1})^{\ast}$ maps 
the volume element $\pm\frac{dx_1\cdots dx_n}{x_0^{n+1}}$ 
in the hemisphere model (on $\mathbb{S}^n$)
into the volume element $\frac{dy_0\cdots dy_{n-1}}{y_0^n}$ 
in the upper half-space model with the appropriate orientation of each coordinate system,
thus
\begin{equation}
\label{equation_mu_h_P}
\mu_{h,\epsilon}(P)=\int_{g(P)\subset \mathbb{R}^n}\frac{x_0^n}{(x_0-\epsilon i)^n}\frac{dy_0\cdots dy_{n-1}}{y_0^n}
=\int_{g(P)\subset \mathbb{R}^n}\frac{dy_0\cdots dy_{n-1}}{(y_0-\frac{y_0}{x_0}\epsilon i)^n}.
\end{equation}
By (\ref{equation_upper_half_space_coordinate}), $\frac{y_0}{x_0}=\frac{2}{x_n+1}$.
Let $r^2=y_0^2+\cdots +y_{n-1}^2$.
As $x_n$ determines (and is determined by) $r$,
we denote $\frac{y_0}{x_0}$ by $c(r)$.
Further computation shows that $c(r)=\frac{4+r^2}{4}$, but we do not need it.

As $P$ does not contain a neighborhood of the points $x_n=\pm 1$, 
so $g(P)$ is bounded in $\mathbb{R}^n$ and does not contain a neighborhood of the origin of space $y$'s.
Let 
\[b_{\epsilon}(r)=\int_{g(P)\cap S^{n-1}_r}\frac{dA_r}{(y_0-c(r)\epsilon i)^n},
\]
where $dA_r$ is the Euclidean volume element on $(n-1)$-sphere $S^{n-1}_r$ (in space $y$'s).
In (\ref{equation_mu_h_P}) using the polar coordinates, then
\[\mu_{h,\epsilon}(P)=\int_{r_0}^{r_1}b_{\epsilon}(r)dr,
\]
where $0<r_0<r_1$ is determined by $t_0$ and $t_1$.
Also let 
\[b(r)=\lim_{\epsilon\to 0^+} b_{\epsilon}(r).
\]
Notice that if we use the notation $a_{\epsilon}(r)$ from (\ref{equation_derivative_u_epsilon_r}),
and ignore the differences between $P$ and $g(P)$ for a moment,
then $b_{\epsilon}(r)=a_{c(r)\epsilon}(r)$.
In (\ref{equation_schlafli_special_epsilon_2_bound}), 
on $[r_0,r_1]$ $a_{\epsilon}(r)$ is bounded by 
an integrable function that does not depend on the value of $\epsilon$,
therefore $b_{\epsilon}(r)$ is also bounded by the same integrable function
with $c(r)$ playing no role in the argument.
Then Lemma~\ref{lemma_a_epsilon_limit_convergent} also applies to $\{b_{\epsilon}(r)\}$
(replacing $\{a_{\epsilon}(r)\}$),
thus Lebesgue dominated convergence theorem applies to $\{b_{\epsilon}(r)\}$
and $b(r)$ is integrable on $[r_0,r_1]$, and
\[\mu_h(P)
=\lim_{\epsilon\to 0^+} \mu_{h,\epsilon}(P)         
=\lim_{\epsilon\to 0^+} \int_{r_0}^{r_1} b_{\epsilon}(r)dr =\int_{r_0}^{r_1} b(r) dr.
\]
Also for fixed $m$, $r_0$ and $r_1$,
by Lemma~\ref{lemma_a_epsilon_limit_convergent}
we have $\int_{r_0}^{r_1}b_{\epsilon}(r)dr$ uniformly bounded.
As $r_0$ and $r_1$ are determined by fixed constants 
$(t_0,t_1)=(-\frac{\sqrt{2}}{2},\frac{\sqrt{2}}{2})$,
so for a fixed $m$, $\mu_{h,\epsilon}(P)$ is uniformly bounded.
This proves (2).

As the $c(r)$ in $b_{\epsilon}(r)$ can be dropped without affecting the calculation of $b(r)$,
so by applying Lebesgue dominated convergence theorem again, we have

\begin{align*}
\int_{r_0}^{r_1} b(r) dr
&=\lim_{\epsilon\to 0^+} \int_{r_0}^{r_1} \left(\int_{g(P)\cap S^{n-1}_r}\frac{dA_r}{(y_0-\epsilon i)^n}\right) dr   \\
&=\lim_{\epsilon\to 0^+} \int_{g(P)\subset \mathbb{R}^n}\frac{dy_0\cdots dy_{n-1}}{(y_0-\epsilon i)^n}  
=\mu(g(P)),
\end{align*}
where the second step uses polar coordinates to $g(P)$ in $\mathbb{R}^n$
and the last step is by Lemma~\ref{lemma_SDF_special}.
So $\mu_h(P)=\mu(g(P))$.
So by Proposition~\ref{proposition_mu_u_invariant},
$\mu_h(P)$ is also invariant under isometry.
This proves (1) and finishes the proof.
\end{proof}

In the hemisphere model on $n$-sphere $S^n_r$, let
\begin{equation}
\label{equation_P_hemisphere_n_plus_1}
g_{h,\epsilon}(P)
=\int_{P\subset S^n_r} \pm\frac{rdx_1\cdots dx_n}{x_0(x_0-\epsilon i)^{n+1}},
\end{equation}
with the positive sign for $x_0>0$ and the minus sign for $x_0<0$.
By (\ref{equation_volume_P_hemisphere_r_epsilon})
and (\ref{equation_volume_P_hemisphere_r_epsilon_variant}), 
we have
\begin{equation}
\label{equation_mu_h_epsilon_P_difference}
\mu'_{h,\epsilon}(P)-\mu_{h,\epsilon}(P)
=\epsilon g_{h,\epsilon}(P) i.
\end{equation}

Finally, as the last part of the induction step to prove Theorem~\ref{theorem_volume_invariant},
we show that $\mu'_h(P)$ is invariant under isometry.

\begin{proposition}
\label{proposition_mu_h_prime_invariant}
Assuming Theorem~\ref{theorem_volume_invariant} is true up to dimension $n-1\ge 1$.
For a fixed $m$, let $P$ be a polytope in $\mathbb{DH}^n$ 
and be the intersection of at most $m$ half-spaces.
Then in the hemisphere model on $n$-sphere $S^n_r$,
(1) $\mu'_h(P)$ is  invariant under isometry and $\mu'_h(P)=\mu_h(P)$,
and 
(2) $\mu'_{h,\epsilon}(P)$ is uniformly bounded.
\end{proposition}

\begin{proof}
In (\ref{equation_P_hemisphere_n_plus_1}), by using polar coordinates, we have
\[g_{h,\epsilon}(P)
=\int_{P\subset S^n_r} \frac{dA_r}{(x_0-\epsilon i)^{n+1}},
\]
where $dA_r$ is the Euclidean volume element on $n$-sphere $S^n_r$.
Notice that $g_{h,\epsilon}(P)$ has the same form as $a_{\epsilon}(r)$ in (\ref{equation_derivative_u_epsilon_r}),
except that in (\ref{equation_derivative_u_epsilon_r}) $P$ is in a different context
and the integration is on an $(n-1)$-dimensional region instead.
In fact, the formula of $a_{\epsilon}(r)$ in 
(\ref{equation_schlafli_special_epsilon_2_formula}) 
does not rely on any induction assumption
and can be freely applied to higher dimensions to compute $g_{h,\epsilon}(P)$ as well.
By using (\ref{equation_schlafli_special_epsilon_2_formula}) 
and working carefully with those minor notational differences, we have
\begin{equation}
\label{equation_P_hemisphere_n_plus_1_formula}
g_{h,\epsilon}(P)
=-\frac{1}{n}\sum_{E}\pm \frac{(r^2-r_{\sss E}^2)^{1/2}}{r\cdot r_{\sss E}} \mu'_{h,\epsilon}(E)
+\frac{\epsilon i}{r^2}\mu'_{h,\epsilon}(P),
\end{equation}
where the sum is taken over all $(n-1)$-faces $E$ of $P$, and $r_{\sss E}$ is the radius of $E$.
The ``$\pm$'' is explained in Lemma~\ref{lemma_theta_derivative_r}, 
but this sign is not important for our purpose 
as we are only concerned with its magnitude.

In (\ref{equation_P_hemisphere_n_plus_1_formula}), multiplying $\epsilon i$ on both sides
and then substituting the left side with $\mu'_{h,\epsilon}(P)-\mu_{h,\epsilon}(P)$
(by (\ref{equation_mu_h_epsilon_P_difference})), we have
\[\mu'_{h,\epsilon}(P)-\mu_{h,\epsilon}(P)
=-\frac{1}{n}\sum_{E}\pm \frac{(r^2-r_{\sss E}^2)^{1/2}\epsilon i}{r\cdot r_{\sss E}} \mu'_{h,\epsilon}(E)
-\frac{\epsilon^2}{r^2}\mu'_{h,\epsilon}(P).
\]
Rearrange the terms, then
\[(1+\frac{\epsilon^2}{r^2})\mu'_{h,\epsilon}(P)
=-\frac{1}{n}\sum_{E}\pm \frac{(r^2-r_{\sss E}^2)^{1/2} i}{r} \cdot \frac{\epsilon}{r_{\sss E}} \mu'_{h,\epsilon}(E)
+\mu_{h,\epsilon}(P),
\]
thus
\begin{equation}
\label{equation_P_hemisphere_formula}
\mu'_{h,\epsilon}(P)
=-\frac{r^2}{n(r^2+\epsilon^2)}\sum_{E}\pm \frac{(r^2-r_{\sss E}^2)^{1/2} i}{r} \cdot \frac{\epsilon}{r_{\sss E}} \mu'_{h,\epsilon}(E)
+\frac{r^2}{r^2+\epsilon^2}\mu_{h,\epsilon}(P).
\end{equation}

For a fixed $m$, by Lemma~\ref{lemma_f_u_epsilon_limit}
$\frac{\epsilon}{r_{\sss E}}\mu'_{h,\epsilon}(E)$ is uniformly bounded,
and by Proposition~\ref{proposition_mu_h_invariant}
$\mu_{h,\epsilon}(P)$ is uniformly bounded.
As $\frac{(r^2-r_{\sss E}^2)^{1/2}}{r}\le 1$ and $\frac{r^2}{r^2+\epsilon^2}\le 1$,
so the right side of (\ref{equation_P_hemisphere_formula}) is uniformly bounded.
Thus $\mu'_{h,\epsilon}(P)$ is also uniformly bounded
and this proves (2).

By the uniform boundedness of $\mu'_{h,\epsilon}(E)$ by induction,
$\lim_{\epsilon\to 0^+} \epsilon\mu'_{h,\epsilon}(E)=0$.
Then in (\ref{equation_P_hemisphere_formula}) as $\epsilon\to 0^+$ we have
\[\mu'_h(P)=\lim_{\epsilon\to 0^+} \mu'_{h,\epsilon}(P)
=\lim_{\epsilon\to 0^+} \frac{r^2}{r^2+\epsilon^2}\mu_{h,\epsilon}(P)=\mu_h(P),
\]
where the last step is by Proposition~\ref{proposition_mu_h_invariant}.
This proves (1) and finishes the proof.
\end{proof}

\section{Proof of Theorem~\ref{theorem_volume_invariant}}
\label{section_volume_invariant_proof}

The proof of Theorem~\ref{theorem_volume_invariant} is simply a summarization of early results.

\begin{proof}[Proof of Theorem~\ref{theorem_volume_invariant}]
For dimension 0 and 1, Theorem~\ref{theorem_volume_invariant} 
is verified in Lemma~\ref{lemma_volume_invariant_0_1}.
Now assuming it is true up to dimension $n-1\ge 1$, and for a fixed $m$,
let $P$ be an $n$-dimensional polytope in $\mathbb{DH}^n$
and be the intersection of at most $m$ half-spaces.
By Lemma~\ref{lemma_SDF_special}, 
$\mu(P)$ exists and $\mu_{\epsilon}(P)$ is uniformly bounded.
By Proposition~\ref{proposition_mu_u_invariant}, $\mu(P)$ is invariant under isometry.
By Proposition~\ref{proposition_mu_h_invariant}, $\mu_h(P)$ is invariant under isometry
and agrees with the value $\mu(g(P))$ for any isometry $g$ mapping to
the upper half-space model,
and $\mu_{h,\epsilon}(P)$ is uniformly bounded.
By Proposition~\ref{proposition_mu_h_prime_invariant}, 
$\mu'_h(P)$ is invariant under isometry and $\mu'_h(P)=\mu_h(P)$,
and $\mu'_{h,\epsilon}(P)$ is uniformly bounded.
So we prove Theorem~\ref{theorem_volume_invariant} for dimension $n$ by induction,
and finish the proof.
\end{proof}

\section{Proof of Theorem~\ref{theorem_schlafli}}
\label{section_schlafli_double_proof}

With Theorem~\ref{theorem_volume_invariant} proved,
in this section we prove Theorem~\ref{theorem_schlafli}. 
We first have the following analogue of a classical formula for polygons in dimension two.

\begin{lemma}
\label{lemma_volume_polygon_mu}
For $n=2$, let $P$ be a polytope in $\mathbb{DH}^2$.
If $P_{+}$ in $\mathbb{H}^2$ has $m$ sides with dihedral angles $\theta_i$ between consecutive sides
(the sides in $\mathbb{H}^2$ can always be arranged in a circular order,
and denote $\theta_i=0$ if two consecutive sides do not intersect),
then
\begin{equation}
\label{equation_volume_polygon_mu}
V_2(P)=2(m-2)\pi-2\sum_i\theta_i.
\end{equation}
\end{lemma}

\begin{proof}
We will use the upper half-space model.
By Lemma~\ref{lemma_SDF_special} we have $V_2(P)=\mu(P)=\int f(t)dt$ 
where by (\ref{equation_schlafli_special}) $f(t)=-2\sum_{F}\frac{d\theta_F}{dt}$
(the coefficient 2 is because each 0-face $F$ has two points and $V_0(F)=2$).
By integrating $f(t)$ over $t$, with the details skipped,
we then verify (\ref{equation_volume_polygon_mu}).
\end{proof}

In Theorem~\ref{theorem_schlafli}, 
the assumption that $P$ does not have any ideal vertices
will greatly simplify the proof of the \SDF{} for $\mathbb{DH}^n$
without affecting other main results.

\begin{theorem}
\emph{(Theorem~\ref{theorem_schlafli}, Schl\"{a}fli differential formula for $\mathbb{DH}^n$)}
For $n\ge 2$, let $P$ be a polytope in $\mathbb{DH}^n$ that does not contain any ideal vertices, 
and for each $(n-2)$-dimensional face $F$, let $\theta_F$ be the dihedral angle at $F$.
Then for $\kappa=-1$,
\begin{equation}
\label{equation_schlafli_double_hyperbolic}
\kappa\cdot dV_n(P)=\frac{1}{n-1}\sum_{F}V_{n-2}(F)\,d\theta_F,
\end{equation}
where the sum is taken over all $(n-2)$-faces $F$ of $P$.
For $n-2=0$, $V_0(F)$ is the number of points in $F$.
\end{theorem}

\begin{proof}
For $n=2$, by Lemma~\ref{lemma_volume_polygon_mu} we have
$V_2(P)=2(m-2)\pi-2\sum_i\theta_i$,
so it automatically satisfies (\ref{equation_schlafli_double_hyperbolic}).
We assume $n\ge 3$ in the following.

For an arbitrary polytope $P$ in $\mathbb{DH}^n$,
first we want to cut $P$ into some subdivisions, 
such that each subdivision is a polytope and all the $(n-1)$-dimensional faces meet transversally.
In the Klein model for $\mathbb{DH}^n$, which is a double covering of a disk,
if we only consider the upper portion $P_{+}$ of $P$,
then $P_{+}$ lies on a real projective space $\mathbb{RP}^n$.
We can find a bounded region $P^{\prime}$ in $\mathbb{RP}^n$ with flat facets only
(like a Euclidean polytope),
such that each flat facet of $P_{+}$ is also part of a flat facet of $P^{\prime}$.
With $P^{\prime}$ we can have a standard triangulation with flat planes,
and when restricting this triangulation to $P_{+}$ and performing the same cut
on the lower portion $P_{-}$ of $P$, we cut $P$ into some pieces of polytopes in $\mathbb{DH}^n$.
As $P$ does not contain ideal vertices,
the cut can be general enough such that for each subdivision there is still no ideal vertices.
If we vary $P$ smoothly in a small neighborhood, then we can vary each subdivision 
smoothly as well. As both sides of (\ref{equation_schlafli_double_hyperbolic}) are additive 
with respect to those subdivisions, thus proving (\ref{equation_schlafli_double_hyperbolic})
for a general $P$ can be deduced to proving for each subdivision of $P$.
For any subdivision of $P$, as all its $(n-1)$-dimensional faces meet transversally,
so any small variation can be obtained by varying the $(n-1)$-faces independently.

Without loss of generality, we still denote the subdivision by $P$.
It suffices to consider just varying any one of the $(n-1)$-faces $E$.
By Lemma~\ref{lemma_SDF_special}, it shows that (\ref{equation_schlafli_double_hyperbolic})
is true for a special one-parameter family of polyhedra.
Next we extend it to an $n$-parameter family where $E$ can move freely.

We first use the hyperboloid model.
Let $H$ be the $(n-1)$-dimensional $\mathbb{DH}^{n-1}$ that contains $E$,
and $e$ be the inward unit normal to $P$ along the face $E$ in the upper sheet.
We have $e\cdot e=1$ and any small movement of $e$ corresponds to a small movement of $E$.
As both sides of (\ref{equation_schlafli_double_hyperbolic}) are additive 
with respect to subdivisions of $P$, without loss of generality we assume $E$ is not 
a full $\mathbb{DH}^{n-1}$.
As $n\ge 3$, there is a point $p$ in $\partial\mathbb{H}^n\cap H$ (a $\partial\mathbb{H}^{n-1}$)
but outside of $E$, thus $p$ is also outside of $P$.

Fix any finite $p^{\prime}$ on the light cone to represent $p$.
Since $p$ is on $H$, so $p^{\prime}\cdot e=0$.
As on light cone we have $p^{\prime}\cdot p^{\prime}=0$, 
so $(e+sp^{\prime})\cdot (e+sp^{\prime})=1$,
hence $e$ may vary
along the line of $e+sp^{\prime}$ while still being a unit vector.
Now switching to the upper half-space model
by mapping $p$ (under an isometry) to the point at infinity $x_0=\infty$
(so $P$ is mapped to a finite region in $\mathbb{R}^n$).
Then varying $e$ along the line of $e+sp^{\prime}$ corresponds to a moving plane 
of the form $x_{n-1}=t$ sweeping through the Euclidean space parallelly.
By Lemma~\ref{lemma_SDF_special},
this special case satisfies (\ref{equation_schlafli_double_hyperbolic}).

Now switch back to the hyperboloid model.
As $E$ intersects other faces transversally,
$dV_n(P)$ is linear to the change of $e$.
To show that (\ref{equation_schlafli_double_hyperbolic})
is true when $e$ varies along any direction in a small neighborhood, 
it suffices to find $n$ linearly independent vectors that satisfy
(\ref{equation_schlafli_double_hyperbolic}) as $p$ does.
As $n\ge 3$, this can be achieved by selecting $n$ such linearly independent rays 
$p_1,\dots,p_n$ in a small neighborhood of $p$ on $\partial\mathbb{H}^n\cap H$
(a $\partial\mathbb{H}^{n-1}$). This completes the proof.
\end{proof}

\section{Proofs of Theorem~\ref{theorem_total_volume}, \ref{theorem_volume_finite}, 
and \ref{theorem_volume_real_imaginary}}
\label{section_total_volume_second}

As an elementary application of the \SDF{} for $\mathbb{DH}^n$ (Theorem~\ref{theorem_schlafli}),
we prove Theorem~\ref{theorem_total_volume},
with a proof similar to \cite[Example 2]{Milnor:Schlafli}
about standard unit spheres.

\begin{theorem}
\emph{(Theorem~\ref{theorem_total_volume})}
The total volume of $\mathbb{DH}^n$ is 
\begin{equation}
V_n(\mathbb{DH}^n)=i^n V_n(\mathbb{S}^n)
\end{equation}
for both even and odd dimensions,
where $V_n(\mathbb{S}^n)$ is the $n$-dimensional volume of the standard 
unit $n$-sphere $\mathbb{S}^n$.
\end{theorem}

\begin{proof}
In the hemisphere model on the unit $n$-sphere $\mathbb{S}^n$:
$x_0^2+\cdots+x_n^2=1$, where $\partial\mathbb{H}^n$ is on $x_0=0$, 
define an $n$-dimensional \emph{lune} $L^n_{\theta}$
to be the portion of $\mathbb{DH}^n$ such that
the last two coordinates can be expressed as
\[x_{n-1}=r\cos\varphi, \quad x_n=r\sin\varphi, 
\quad\text{with}\quad r\geq 0\quad\text{and}\quad
0\leq\varphi\leq\theta.
\]
Then $L^n_{\theta}$ has just two $(n-1)$-dimensional faces
of $\varphi=0$ and $\varphi=\theta$.
Their intersection is an $(n-2)$-dimensional $\mathbb{DH}^{n-2}$
with $x_{n-1}=x_n=0$, and with a dihedral angle $\theta$ between the two faces.
So by Theorem~\ref{theorem_schlafli}, we have
\[\kappa\cdot dV_n(L^n_{\theta})
=\frac{1}{n-1}V_{n-2}(\mathbb{DH}^{n-2})d\theta.
\]
As $V_n(L^n_{\theta})=0$ for $\theta=0$, so integrating the above
one gets
\[\kappa\cdot V_n(L^n_{\theta})
=\frac{1}{n-1}V_{n-2}(\mathbb{DH}^{n-2})\theta.
\]
Taking $\theta=2\pi$, then $L^n_{\theta}$ is the full $\mathbb{DH}^n$,
and therefore
\[\kappa\cdot V_n(\mathbb{DH}^n)
=\frac{2\pi}{n-1}V_{n-2}(\mathbb{DH}^{n-2}).
\]
Notice that with the exception of the extra coefficient $\kappa$,
it is exactly the same recursive formula for $V_n(\mathbb{S}^n)$.
As $\kappa=-1=i^2$, as well as
$V_0(\mathbb{DH}^0)=2=V_0(\mathbb{S}^0)$ 
and $V_1(\mathbb{DH}^1)=2\pi i=i V_1(\mathbb{S}^1)$
of (\ref{equation_total_volume_dh1}),
we immediately obtain
\[V_n(\mathbb{DH}^n)=i^n V_n(\mathbb{S}^n).
\]
This finishes the proof.
\end{proof}

Let $\mathcal{H}$ be the algebra over $\mathbb{DH}^n$
generated by half-spaces in $\mathbb{DH}^n$,
for $n\ge 2$ we show that $V_n(P)$ is not countably additive on $\mathcal{H}$.

\begin{example}
\label{example_not_countably_additive_P}
For $n\ge 2$, in the upper half-space model of $\mathbb{DH}^n$,
let $B_i$ be a closed ball centered at $O$ with radius $1/i$.
Let $P_i=B_i\setminus B_{i+1}$,  then $B_1\setminus\{O\}=\bigcup_{i=1}^{\infty}P_i$.
Notice that $\{O\}$ is the intersection of two closed half-spaces tangent at $O$,
so $\{O\} \in\mathcal{H}$ and hence $B_1\setminus\{O\} \in\mathcal{H}$.
As a ball is a half-space and always has non-zero volume $\frac{1}{2}V_n(\mathbb{DH}^n)$, 
so $V_n(\bigcup_{i=1}^{\infty}P_i)=V_n(B_1\setminus\{O\})\ne 0$.
We also have $V_n(P_i)=V_n(B_i)-V_n(B_{i+1})=0$, 
so $V_n(\bigcup_{i=1}^{\infty}P_i)\ne\sum_{i=1}^{\infty}V_n(P_i)$.
Thus $V_n(P)$ is not countably additive on $\mathcal{H}$.
\end{example}

\begin{theorem}
\emph{(Theorem~\ref{theorem_volume_finite}, Uniform boundedness of $V_n(P)$ for a fixed $m$)}
Let $P$ be a polytope in $\mathbb{DH}^n$
and be the intersection of at most $m$ half-spaces in $\mathbb{DH}^n$, then 
\begin{equation}
\label{equation_volume_finite_repeat}
|V_n(P)|\le\frac{m!}{2^{m-1}}V_n(\mathbb{S}^n).
\end{equation}
\end{theorem}

The bound for $V_n(P)$ in (\ref{equation_volume_finite_repeat}) is very loose, 
but it provides an explicit bound
and can be verified by running a rather simple induction on both $m$ and $n$.
On the other hand,  there is no fixed bound for $V_n(P)$ for all $m$.
For example, when $P$ is obtained by ``cutting'' $m$ non-intersecting half-spaces
from $\mathbb{DH}^n$ for $m\ge 2$, then $|V_n(P)|=\frac{m-2}{2}\,V_n(\mathbb{S}^n$).

\begin{proof}
First, for all $m\ge 0$ we have $\frac{m!}{2^{m-1}}\ge 1$.
For $n=0$, $P$ is $\mathbb{DH}^0$, so (\ref{equation_volume_finite_repeat}) is true.
For $n=1$, as $V_1(P)$ only takes values of $2\pi i$, $\pi i$, or $0$,
so $|V_1(P)|\le V_1(\mathbb{S}^1)$ and (\ref{equation_volume_finite_repeat}) is true as well.
We assume $n\ge 2$ in the following.
For $m\le 1$, $P$ is either the full $\mathbb{DH}^n$ or a half-space in $\mathbb{DH}^n$,
so by Theorem~\ref{theorem_total_volume} we have
$|V_n(P)|\le V_n(\mathbb{S}^n)$ and thus (\ref{equation_volume_finite_repeat}) is also true.
Now we run induction on $m$ for $m\ge 2$.

We use the upper half-space model for $\mathbb{DH}^n$,
and without loss of generality, we assume $P$ is in a finite region in $\mathbb{R}^n$.
Denote by $H_i$ the face of each half-space, which is also a $\mathbb{DH}^{n-1}$.
Let $P_t$ be the intersection of $P$ and $x_{n-1}\le t$,
$E_i$ be the $(n-1)$-face of $P$ on $H_i$,
and $F_i$ be the intersection of $E_i$ and $x_{n-1}=t$
(it is ok if $F_i$ is an empty face).
As there are at most $m$ such $(n-1)$-faces of $P$,
by Lemma~\ref{lemma_SDF_special} 
(and replacing $\mu(P_t)$ with $V_n(P_t)$) we have
\begin{equation}
\label{equation_schlafli_special_repeat}
\kappa\cdot \frac{dV_n(P_t)}{dt}
=\frac{1}{n-1}\sum_{i\le m} V_{n-2}(F_i)\,\frac{d\theta_{F_i}}{dt}.
\end{equation}

For a given non-empty $(n-2)$-face $F_i$ of $P_t$ on $x_{n-1}=t$,
$F_i$ lies on a $\mathbb{DH}^{n-2}$ 
that is the intersection of $H_i$ and $x_{n-1}=t$.
In this $\mathbb{DH}^{n-2}$,
$F_i$ is the intersection of \emph{at most} $m-1$ half-spaces 
(with the possibility of $F_i$ being the full $\mathbb{DH}^{n-2}$),
contributed by other $H_j$ ($j\ne i$, but not $H_i$) intersecting with this $\mathbb{DH}^{n-2}$.
By induction we have 
\[|V_{n-2}(F_i)|\le \frac{(m-1)!}{2^{m-2}}\,V_{n-2}(\mathbb{S}^{n-2}),
\]
so by (\ref{equation_schlafli_special_repeat})
\begin{equation}
\label{equation_f_inequality}
\left| \frac{dV(P_t)}{dt} \right|
\le \frac{1}{n-1}\cdot\frac{(m-1)!}{2^{m-2}}\,V_{n-2}(\mathbb{S}^{n-2})\sum_{i\le m} \left|\frac{d\theta_{F_i}}{dt}\right|.
\end{equation}

When $F_i$ is not an empty face,
we also have $\theta_{F_i}$ as a \emph{monotonic} function over $t$
that takes values in an interval between $0$ and $\pi$. 
By this monotonicity of $\theta_{F_i}$, no matter increasing or decreasing, 
integrating in this range only we have $\int |\frac{d\theta_{F_i}}{dt}|dt\le\pi$.
Thus by integrating (\ref{equation_f_inequality})

\begin{align*}
\int \left| \frac{dV(P_t)}{dt} \right| dt
&\le\frac{1}{n-1}\cdot\frac{(m-1)!}{2^{m-2}}\,V_{n-2}(\mathbb{S}^{n-2})\cdot m\pi       \\
&=\frac{2\pi}{n-1}\cdot\frac{m!}{2^{m-1}}\,V_{n-2}(\mathbb{S}^{n-2})           
=\frac{m!}{2^{m-1}}\,V_n(\mathbb{S}^n),
\end{align*}
where the last step uses a recursive formula 
$V_n(\mathbb{S}^n)=\frac{2\pi}{n-1}V_{n-2}(\mathbb{S}^{n-2})$.
So
\[|V_n(P)|
\le\int \left| \frac{dV(P_t)}{dt} \right| dt
\le \frac{m!}{2^{m-1}}\,V_n(\mathbb{S}^n).
\]
This completes the proof.
\end{proof}

We also immediately prove Theorem~\ref{theorem_volume_real_imaginary}.

\begin{theorem}
\emph{(Theorem~\ref{theorem_volume_real_imaginary})}
Let $\mathcal{H}$ be the algebra over $\mathbb{DH}^n$ generated by half-spaces in $\mathbb{DH}^n$, and $P\in\mathcal{H}$.
Then $V_n(P)$ is real for $n$ even,
and $V_n(P)$ is imaginary for $n$ odd and is completely determined by $P\cap \partial\mathbb{H}^n$.
\end{theorem}

\begin{proof}
In the upper half-space model,
without loss of generality we assume that $P$ is in a finite region in $\mathbb{R}^n$.
By (\ref{equation_mu_epsilon_P_plus}) we have
\[\mu_{\epsilon}(P)=\mu_{\epsilon}(P_{+})+\mu_{\epsilon}(P_{-})
=\mu_{\epsilon}(P_{+})+(-1)^n \mu_{-\epsilon}(P_{+}).
\]
Taking the pointwise sum of ($\mu_{\epsilon}+(-1)^n \mu_{-\epsilon})$ on $P_{+}$, 
observe that it is real for $n$ even and imaginary for $n$ odd.
As $V_n(P)=\mu(P)=\lim_{\epsilon\to 0^+} \mu_{\epsilon}(P)$,
so $V_n(P)$ is also real for $n$ even and imaginary for $n$ odd.

For $n=1$, if $P\cap\partial\mathbb{H}^1$ contains 0, 1, or 2 points,
$V_1(P)$ takes value of 0, $\pi i$, or $2\pi i$ respectively.
So $V_1(P)$ is determined by the number of points in  $P\cap\partial\mathbb{H}^1$.

For $n$ odd and $n\ge 3$, assume $P, P^{\prime}\in\mathcal{H}$ and
$P\cap\partial\mathbb{H}^n=P^{\prime}\cap\partial\mathbb{H}^n$.
We first show that $V_n(P\setminus P^{\prime})=0$.
It is convenient to use the Klein model for visualization,
and denote the upper portion of $P\setminus P^{\prime}$ by $(P\setminus P^{\prime})_{+}$.
Using flat planes $(P\setminus P^{\prime})_{+}$
can be cut into the union of hyperbolic simplices 
(with ideal vertices allowed, see also Remark~\ref{remark_union_polytopes}),
so $(P\setminus P^{\prime})_{+}$ has finite volume.
As $n$ is odd, so $V_n((P\setminus P^{\prime})_{-})=-V_n((P\setminus P^{\prime})_{+})$,
hence $V_n(P\setminus P^{\prime})=0$. 
Therefore
\[V_n(P)=V_n(P\setminus P^{\prime}) + V_n(P\cap P^{\prime})=V_n(P\cap P^{\prime}).
\]
By symmetry we also have $V_n(P^{\prime})=V_n(P\cap P^{\prime})$,
therefore $V_n(P^{\prime})=V_n(P)$.
Thus for $n$ odd, $V_n(P)$ is completely determined by $P\cap\partial\mathbb{H}^n$.
\end{proof}

Theorem~\ref{theorem_volume_real_imaginary} is crucial for exploring new 
geometric properties of $\partial\mathbb{H}^n$ on top of the standard conformal structure on a sphere.
We will discuss this in more detail next.

\section{A volume on $\partial\mathbb{H}^{2m+1}$}
\label{section_volume_boundary}

As an important application of Theorem~\ref{theorem_volume_real_imaginary},
for $n$ odd, we show that the volume on $\mathbb{DH}^n$ induces an intrinsic volume
on $\partial\mathbb{H}^n$ on regions generated by half-spaces in $\partial\mathbb{H}^n$,
with the induced volume invariant under M\"{o}bius transformations.
But this volume on $\partial\mathbb{H}^n$ is not induced by any volume form 
on $\partial\mathbb{H}^n$ as a differentiable manifold.

Here we clarify some notions that apply to all $n\ge 1$, both even and odd.
For a half-space in $\mathbb{DH}^n$, its restriction to $\partial\mathbb{H}^n$ is 
called a \emph{half-space} in $\partial\mathbb{H}^n$.
If using $\mathbb{R}^{n-1}$ (plus a $\infty$) as a model for $\partial\mathbb{H}^n$,
then a half-space is either the insider or the outside of a ball, or a Euclidean half-space;
if using $\mathbb{S}^{n-1}$ as a model for $\partial\mathbb{H}^n$,
then any hyperplane crossing $\mathbb{S}^{n-1}$ 
cuts $\partial\mathbb{H}^n$ into two half-spaces.
Let $\mathcal{F}$ (resp. $\mathcal{H}$) be the algebra over $\partial\mathbb{H}^n$
(resp. $\mathbb{DH}^n$) generated 
by the half-spaces in $\partial\mathbb{H}^n$ (resp. $\mathbb{DH}^n$).
A \emph{polytope} in $\partial\mathbb{H}^n$ is 
a finite intersection of closed half-spaces in $\partial\mathbb{H}^n$,
which can also be viewed as a restriction of a polytope $P$ in $\mathbb{DH}^n$ to $\partial\mathbb{H}^n$
(but the choice of $P$ may not be unique, see Remark~\ref{remark_missing_face}).


For $n=2m+1$ and $P\in\mathcal{H}$, let $G=P\cap\partial\mathbb{H}^{2m+1}$.
We define a real valued \emph{volume} of $G$ by
\begin{equation}
\label{equation_volume_boundary}
V_{\infty,2m}(G):=c_{2m}\cdot V_{2m+1}(P),
\quad\text{where}\quad
c_{2m}=\frac{V_{2m}(\mathbb{S}^{2m})}{i^{2m+1}V_{2m+1}(\mathbb{S}^{2m+1})}.
\end{equation}
The factors $c_{2m}$ are chosen to normalize $V_{2m+1}(P)$ in a way such that 
(1) for $m=0$, $V_{\infty,0}(G)$ is the number of points in $G$, and
(2) for a polytope $G$, $V_{\infty,2m}(G)$ satisfies a \SDF{}  for $\partial\mathbb{H}^{2m+1}$ 
with the appropriate coefficients (Theorem~\ref{theorem_schlafli_boundary_infinity_new}).
Setting $P=\mathbb{DH}^{2m+1}$, 
by Theorem~\ref{theorem_total_volume} and (\ref{equation_volume_boundary}), we have
\begin{equation}
\label{equation_volume_boundary_total}
V_{\infty,2m}(\partial\mathbb{H}^{2m+1})=V_{2m}(\mathbb{S}^{2m}).
\end{equation}
It is worth noting that the choice of the set of factors $c_{2m}$ is not unique,
e.g., we can just as well choose a different set of factors such that
$V_{\infty,2m}(\partial\mathbb{H}^{2m+1})=(-1)^m V_{2m}(\mathbb{S}^{2m})$ instead,
then the \SDF{} for $\partial\mathbb{H}^{2m+1}$
in Theorem~\ref{theorem_schlafli_boundary_infinity_new}
should add a minus sign.

\begin{theorem}
\label{theorem_invariant_mobius}
For $m\ge 0$, $V_{\infty,2m}(G)$ is well defined on $\mathcal{F}$,
and is invariant under M\"{o}bius transformations of $\partial\mathbb{H}^{2m+1}$.
\end{theorem}

\begin{proof}
For $G\in\mathcal{F}$, let $P, P^{\prime}\in\mathcal{H}$ and satisfy
$P\cap\partial\mathbb{H}^{2m+1}=P^{\prime}\cap\partial\mathbb{H}^{2m+1}=G$.
By Theorem~\ref{theorem_volume_real_imaginary},
we have $V_{2m+1}(P)=V_{2m+1}(P^{\prime})$,
so $V_{\infty,2m}(G)$ is well defined and independent of the choice of $P\in\mathcal{H}$.
By Theorem~\ref{theorem_volume_invariant}, 
$V_{2m+1}(P)$ is invariant under isometry of $\mathbb{DH}^{2m+1}$,
therefore $V_{\infty,2m}(G)$ is invariant under M\"{o}bius transformations of $\partial\mathbb{H}^{2m+1}$.
\end{proof}

\begin{remark}
We remark that $V_{\infty,2m}(G)$ is not countably additive on $\mathcal{F}$.
This can be shown by using the sets in $\mathbb{DH}^{2m+1}$
from Example~\ref{example_not_countably_additive_P},
and then restricting them to $\partial\mathbb{H}^{2m+1}$.
\end{remark}

\begin{remark}
\label{remark_boundary_ball_volume}
If we use $\mathbb{R}^{2m}$ (plus a $\infty$) as a model for $\partial\mathbb{H}^{2m+1}$,
any ball in $\mathbb{R}^{2m}$ is a half-space in $\partial\mathbb{H}^{2m+1}$,
then by Theorem~\ref{theorem_invariant_mobius}
it has a fixed non-zero volume in $\partial\mathbb{H}^{2m+1}$.
As a ball can be arbitrarily ``small'' in $\mathbb{R}^{2m}$,
thus it implies that the volume $V_{\infty,2m}(G)$ is not induced by any volume form 
on $\mathbb{R}^{2m}$ as a differentiable manifold.
\end{remark}

Liouville's theorem states that all conformal mappings on a domain 
of $\mathbb{R}^n$ and $\mathbb{S}^n$ for $n\ge 3$ are restrictions of M\"{o}bius transformations. 
So the volume $V_{\infty,2m}(G)$ on $\partial\mathbb{H}^{2m+1}$ 
for $m>1$ is also invariant under conformal mappings.
To our knowledge $V_{\infty,2m}(G)$ is a new invariant we discovered on $\partial\mathbb{H}^{2m+1}$.
We have the following question.

\begin{question}
Can the volume $V_{\infty,2m}(G)$ be defined in $\partial\mathbb{H}^{2m+1}$
for a larger class of regions than $\mathcal{F}$,
such that it is still invariant under M\"{o}bius transformations?
\end{question}

Say, for closed regions in $\partial\mathbb{H}^{2m+1}$ with piecewise smooth boundary, and if so, how?%
\footnote{For $m=1$, for any closed region $U$ in $\partial\mathbb{H}^3$ 
with piecewise smooth boundary, a potential definition of the volume $V_{\infty,2}(U)$ 
(that is still finitely additive) is as follows.
First partition $U$ into some simplicial regions with piecewise smooth boundaries,
for each region define the volume as $\alpha+\beta+\gamma-\pi$ 
where $\alpha$, $\beta$ and $\gamma$ are the dihedral angles,
then sum them up to define $V_{\infty,2}(U)$.
It can be shown that $V_{\infty,2}(U)$ is well defined and independent of the partition
(see also Corollary~\ref{corollary_volume_infinity_two}).
By the definition, $V_{\infty,2}(U)$ is not only invariant under M\"{o}bius transformations,
but also invariant under any conformal mappings on the \emph{closed} region $U$.
It should not be confused with the Riemman mapping theorem (for mapping to an open disk)
whose subjects are simply connected \emph{open} regions where the mapping may not be necessarily conformal
on the boundary.
}
For $n$ even, we do not have notions of length or volume for $\partial\mathbb{H}^n$:
this is because it is possible for $V_n(P)$ to be non-zero while 
$G=P\cap\partial\mathbb{H}^n$ is the empty set
(e.g., when $P_{+}$ is a convex polytope in $\mathbb{H}^n$),
thus we cannot simply assign $V_n(P)$ to $G$ to obtain a well defined function of $G$.
For $n=2$, another way to see this is that any interval on $\partial\mathbb{H}^2$ is a half-space, 
so we cannot define a non-trivial length on $\partial\mathbb{H}^2$.

\begin{remark}
\label{remark_missing_face}
We remark that unlike the polytopes in $M^n$ or $\mathbb{DH}^n$,
a polytope $G$ in $\partial\mathbb{H}^n$ may not be formed 
by a \emph{unique} minimal set of half-spaces. 
For example in $\partial\mathbb{H}^3$ (a 2-sphere),
the intersection of \emph{three} properly chosen 
half-spaces may contain two \emph{simplicial} components $G$ and $F$
(like in $\mathbb{DH}^2$ the intersection of three half-spaces 
may contain a pair of simplices in $\mathbb{H}^2$ and $\mathbb{H}^2_{-}$ respectively).
But in order to make $G$ itself a polytope in $\partial\mathbb{H}^3$, 
a \emph{fourth} half-space, 
whose choice is not unique, has to be added to separate $G$ and $F$. 
So while $G$ only has three visible sides, it is formed by a set of at least four half-spaces
in $\partial\mathbb{H}^3$ whose choice is not unique. See Figure~\ref{figure_fourth_sphere}.
\end{remark}

\begin{figure}[h]
\centering
  \includegraphics[width=0.5\textwidth]{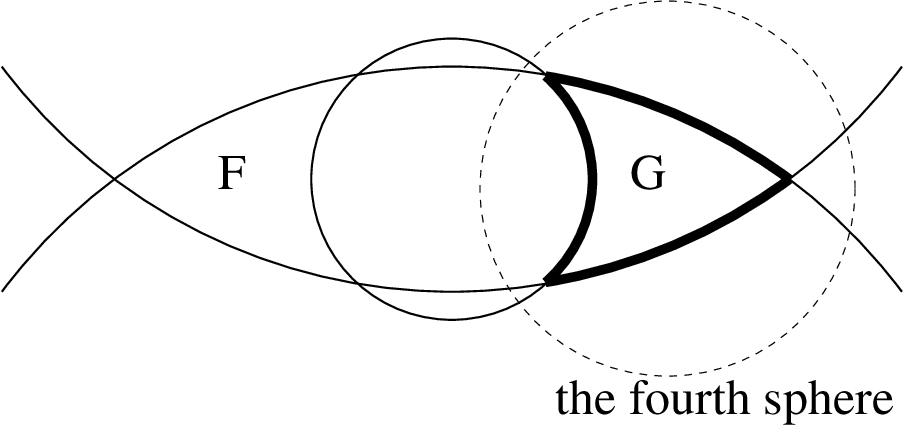}
\caption{A polytope $G$ in $\partial\mathbb{H}^n$ not formed by a unique minimal set of half-spaces}
\label{figure_fourth_sphere}
\end{figure}

For $n\ge 3$, for two half-spaces in $\partial\mathbb{H}^n$
whose boundaries intersect, an intersection \emph{angle} is well defined,
which is the same intersection angle between the two corresponding
half-spaces in $\mathbb{DH}^n$.
For a polytope $P$ in $\mathbb{DH}^{2m+1}$,
we already prove that $V_{2m+1}(P)$ satisfies the \SDF{} for $\mathbb{DH}^{2m+1}$.
For all codimension 2 faces of $P$, as their volumes and the dihedral angles
can also be passed through to $\partial\mathbb{H}^{2m+1}$ 
with a restriction to $\partial\mathbb{H}^{2m+1}$,
so the \SDF{} for $\mathbb{DH}^{2m+1}$
can also be passed through to $\partial\mathbb{H}^{2m+1}$
adjusted by a set of factors $c_{2m}$ in (\ref{equation_volume_boundary}).
By Theorem~\ref{theorem_schlafli}, we have the
the following new version of \SDF{} for $\partial\mathbb{H}^{2m+1}$.

\begin{theorem}
\label{theorem_schlafli_boundary_infinity_new}
For $m\ge 1$, let $P$ be a polytope in $\mathbb{DH}^{2m+1}$ and $G=P\cap\partial\mathbb{H}^{2m+1}$.
If $P$ does not contain any ideal vertices, then 
\begin{equation}
\label{equation_schlafli_boundary}
dV_{\infty,2m}(G)=\frac{1}{2m-1}\sum_{F}V_{\infty,2m-2}(F)\,d\theta_F,
\end{equation}
where the sum is taken over all codimension 2 faces $F$ of $G$.
For $2m-2=0$, $V_{\infty,0}(F)$ is the number of points in $F$.
\end{theorem}

We immediately have the following result for dimension two.

\begin{corollary}
\label{corollary_volume_infinity_two}
Let $G$ be a polytope in $\partial\mathbb{H}^3$ homeomorphic to a closed disk,
and have $k$ sides with dihedral angles $\theta_i$ between consecutive sides, then
\begin{equation}
\label{equation_volume_infinity_two}
V_{\infty,2}(G)=\sum_i\theta_i - (k-2)\pi.
\end{equation}
\end{corollary}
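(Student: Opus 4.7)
The plan is to apply Theorem~\ref{theorem_schlafli_boundary_infinity} with $m=1$, integrate the resulting differential identity, and fix the constant of integration by evaluating on a spherical polygon with geodesic sides.

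Under the hypotheses, each codimension~$2$ face $F_i$ of $G$ is the intersection of $G$ with two consecutive bounding small circles. Two small circles on $\mathbb{S}^2$ meet in at most two points, and since $G$ is homeomorphic to a closed disk with $k$ distinct corners, exactly one such intersection lies in $G$ at each corner. Hence $V_{\infty,0}(F_i)=1$ for every $i$, and Theorem~\ref{theorem_schlafli_boundary_infinity} specialized to $m=1$ reduces to
\[
dV_{\infty,2}(G) = -\sum_{i=1}^{k} d\theta_i.
\]
Thus $V_{\infty,2}(G)+\sum_i\theta_i$ is constant along any continuous deformation of $G$ that preserves its combinatorial type (namely $k$ sides and disk topology).

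I would then deform $G$ inside this combinatorial class to a spherical convex $k$-gon $G_0$ bounded by arcs of great circles. Each bounding plane in $\mathbb{R}^3$ can be continuously translated until it passes through the center of $\mathbb{S}^2$, turning the corresponding small circle into a great circle; the translations are carried out sequentially, with small perturbations to prevent non-adjacent sides from meeting and to keep the combinatorial type fixed. For $G_0$, the identity $V_{\infty,2m}(G)=(-1)^m V_{2m}(G)$ (with $m=1$) established elsewhere in the paper, combined with the classical spherical Gauss--Bonnet formula $V_2(G_0)=\sum_i\theta_i-(k-2)\pi$, gives $V_{\infty,2}(G_0)=(k-2)\pi-\sum_i\theta_i$. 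Hence the invariant constant is $(k-2)\pi$, and the corollary follows.

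The main obstacle is the deformation step: I need the space of ordered $k$-tuples of bounding half-spaces producing a combinatorially fixed disk-like $k$-gon in $\partial\mathbb{H}^3$ to be path-connected. A cleaner alternative that sidesteps this issue is an induction on $k$ via finite additivity of $V_{\infty,2}$: cutting $G$ along a short great-circle arc from one vertex to a non-adjacent one splits $G$ into two disk-like polytopes with strictly fewer sides, and the angles at the cut endpoints telescope so that the formula for $G$ follows from the formulas for the two pieces. This reduces the problem to small values of $k$, where the deformation step can be verified by a direct parameter count.
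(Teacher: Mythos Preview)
Your first two steps---applying Theorem~\ref{theorem_schlafli_boundary_infinity} with $m=1$ to get $dV_{\infty,2}(G)=-\sum_i d\theta_i$, then integrating---match the paper exactly. The difference is in how the constant of integration is fixed. You deform $G$ to a spherical convex $k$-gon and invoke the identity $V_{\infty,2}=-V_2$ for spherical polytopes together with spherical Gauss--Bonnet; the paper instead takes the degenerate limiting configuration $\theta_1=\theta_k=0$, $\theta_i=\pi$ for $2\le i\le k-1$, where $G$ collapses and $V_{\infty,2}(G)=0$, giving the constant $(k-2)\pi$ directly.

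The paper's route is more economical: it stays within the same combinatorial class the whole time, so the connectedness worry you flag never arises, and it does not rely on Corollary~\ref{corollary_boundary_infinity_spherical}, which in the paper is stated and proved \emph{after} this corollary by the same technique. Your approach is not wrong, but it is slightly roundabout---you are effectively first proving the formula on the subfamily of spherical $k$-gons (which already needs a degenerate-limit argument to pin down the constant there) and then extending by a deformation whose details you correctly identify as needing justification. The degenerate limit does all of this in one stroke. Your induction-on-$k$ alternative via finite additivity is a valid independent route and would also avoid the connectedness issue.
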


\begin{proof}
In (\ref{equation_schlafli_boundary}), for $m=1$ by integrating both sides we have
$V_{\infty,2}(G)=\sum_i\theta_i+c$.
By a limiting case with $\theta_1=\theta_k=0$ and $\theta_i=\pi$ for $2\le i\le k-1$, 
where the volume is 0, the constant term is determined to be  $-(k-2)\pi$.
This finishes the proof.
\end{proof}

We fix a standard unit sphere $\mathbb{S}^{2m}$ as a model for $\partial\mathbb{H}^{2m+1}$.
Let $G$ be a \emph{spherical} convex polytope in $\mathbb{S}^{2m}$
and $V_{2m}(G)$ be the standard spherical volume.
As $V_{2m}(G)$ satisfies a \SDF{} with $\kappa=1$,
then by Theorem~\ref{theorem_schlafli_boundary_infinity_new} and an induction on $m$, 
we have
\begin{equation}
\label{equation_boundary_infinity_spherical}
V_{\infty,2m}(G)=V_{2m}(G).
\end{equation}
But somewhat surprisingly, this identity does not hold in general when $G$ is not a spherical convex polytope,
as $V_{\infty,2m}(G)$ is invariant under M\"{o}bius transformations while $V_{2m}(G)$ is not.

\begin{example}
\label{example_s_2}
For $m=1$, let $G$ be formed by three non-intersecting segments of small circles on $\mathbb{S}^2$
with interior angles $\alpha$, $\beta$, and $\gamma$. 
By Corollary~\ref{corollary_volume_infinity_two}
we have $V_{\infty,2}(G)=\alpha+\beta+\gamma-\pi$. 
When the three sides of $G$ form a small circle on $\mathbb{S}^2$
with $\alpha=\beta=\gamma=\pi$, then $V_{\infty,2}(G)=2\pi$, 
but $V_2(G)$ can take any value between 0 and $4\pi$.
Thus $V_{\infty,2}(G)\ne V_2(G)$.
\end{example}

Recall that $\mathbb{S}^{2m}$, $\mathbb{DH}^{2m}$, and $\mathbb{R}^{2m}$ (plus a $\infty$)
are all naturally endowed with the same conformal structure as of $\partial\mathbb{H}^{2m+1}$,
so for a polytope $G$ in $M^{2m}$ or $\mathbb{DH}^{2m}$,
we can also assign a value $V_{\infty,2m}(G)$ to $G$.
We show that $\partial\mathbb{H}^{2m+1}$ exhibits geometric properties of the spherical,
(double) hyperbolic, as well as Euclidean spaces at the same time, in the following sense.
For a polytope $G$ in $\mathbb{DH}^{2m}$,
as $V_{2m}(G)$ satisfies a \SDF{} with $\kappa=-1$,
then by Theorem~\ref{theorem_schlafli_boundary_infinity_new} and an induction on $m$, 
we have $V_{\infty,2m}(G)=(-1)^m V_{2m}(G)$.
If the upper portion of $G$  (denote by $G_{+}$) is also 
a convex polytope in $\mathbb{H}^{2m}$, then
$V_{\infty,2m}(G_{+})=\frac{1}{2}V_{\infty,2m}(G)$ and $V_{2m}(G_{+})=\frac{1}{2}V_{2m}(G)$,
thus $V_{\infty,2m}(G_{+})=(-1)^m V_{2m}(G_{+})$;
but it is not so if $G_{+}$ is an unbounded polytope in $\mathbb{H}^{2m}$,
since $V_{\infty,2m}(G_{+})=\frac{1}{2}V_{\infty,2m}(G)$ still holds but $V_{2m}(G_{+})$ does not exist.
To summarize, we have the following result generalizing 
(\ref{equation_boundary_infinity_spherical}).

\begin{theorem}
\label{theorem_polytope_volume_infinity}
Let $G$ be a convex polytope in $M^{2m}$ or a polytope in $\mathbb{DH}^{2m}$
of constant curvature $\kappa$, then
$V_{\infty,2m}(G)=\kappa^m V_{2m}(G)$.
\end{theorem}

\begin{proof}
The only case left is to show that when $G$ is a convex polytope in $\mathbb{R}^{2m}$ (with $\kappa=0$),
we have $V_{\infty,2m}(G)=0$. Without loss of generality, we assume $G$ is a simplex.
For $m=1$, by (\ref{equation_volume_infinity_two}) we have $V_{\infty,2}(G)=0$.
For $m>1$, by induction the right side of (\ref{equation_schlafli_boundary}) is 0,
so $dV_{\infty,2m}(G)=0$. As $G$ can continuously deform to a degenerate Euclidean simplex 
with zero volume, thus $V_{\infty,2m}(G)=0$. 
\end{proof}

Similar to Example~\ref{example_s_2},
Theorem~\ref{theorem_polytope_volume_infinity} does not hold in general 
when $G$ is not a convex polytope in $M^{2m}$ or a polytope in $\mathbb{DH}^{2m}$.
In the (double) hyperbolic case, this is so because $V_{\infty,2m}(G)$ 
is invariant under M\"{o}bius transformations of $\mathbb{DH}^{2m}$ while $V_{2m}(G)$ is not
(when the M\"{o}bius transformation does \emph{not} preserve the boundary at infinity $\partial\mathbb{H}^{2m}$).
In the Euclidean case, if $G$ is a ball in $\mathbb{R}^{2m}$,
then $V_{\infty,2m}(G)$ has a fixed non-zero volume (see Remark~\ref{remark_boundary_ball_volume})
while $\kappa^m V_{2m}(G)$ is 0.
We also remark that if $G$ is an unbounded polytope in $\mathbb{R}^{2m}$,
we do not have $V_{\infty,2m}(G)=0$ either.


\noindent
{\bf Acknowledgements:}
I would like to thank Wei Luo and a referee for carefully reading the manuscript
and making many helpful suggestions.

{\footnotesize
\bibliographystyle{abbrv}  
\bibliography{double_hyperbolic_arxiv}   

%
%
}

\end{document}